\DeclareMathOperator*{\UC}{\mathtt{UC}}
\DeclareMathOperator*{\UEG}{UEG}
\DeclareMathOperator{\XOR}{XOR}
\newcommand{\N}{\mathbb{N}}
\newcommand{\Z}{\mathbb{Z}}
\newcommand{\cc}{\leftrightarrow}
\newcommand{\Prb}{\mathbb{P}}
\newcommand{\XORI}{\mu^{\XOR}}
\newcommand{\Ising}{\mu}
\newcommand{\Prbcur}{\mathbf{P}}
\newcommand{\Bernoulli}{\mathbb{P}}
\newcommand{\loopmeasure}{\ell}
\newcommand{\Even}{\mathcal{E}_{\emptyset}}
\newcommand{\id}{1\! \!1}
\newcommand{\dc}{\Prbcur^{\emptyset, \emptyset}}
\newcommand{\nocontentsline}[3]{}
\newcommand{\tocless}[2]{\bgroup\let\addcontentsline=\nocontentsline#1{#2}\egroup}
\newcommand{\RedeclareTheoremEnv}[2]{%
  \expandafter\let\csname #1\endcsname\relax
  \expandafter\let\csname end#1\endcsname\relax
  \newtheorem{#1}[theorem]{#2}%
}
\theoremstyle{ejpecpbodyit}
\theoremstyle{ejpecpbodyrm}
\newlength\tindent
\newcommand{\changeoperator}[1]{%
  \csletcs{#1@saved}{#1@}%
  \csdef{#1@}{\changed@operator{#1}}%
}
\newcommand{\changed@operator}[1]{%
  \mathop{%
    \mathchoice{\textstyle\csuse{#1@saved}}
               {\csuse{#1@saved}}
               {\csuse{#1@saved}}
               {\csuse{#1@saved}}%
  }%
}
\pgfplotsset{compat=1.18}
\begin{document}
\section{Introduction}
The Swendsen-Wang cluster algorithm \cite{swendsen1987nonuniversal} 
was a breakthrough for minimizing the effect of critical slowdown of MCMC algorithms. Soon thereafter, it was interpreted  and generalized \cite{chayes1998graphical,edwards1988generalization}. 
This paradigm has since found countless applications and extensions for simulations \cite{sokal1997monte, zhang2020loop}.
In parallel, the use of the random-cluster and random current representations of the Ising model \textcolor{black}{for obtaining rigorous results} has exploded \cite{aizenman1988discontinuity, DC16,DC17,duminil2022100,Gri06}. \textcolor{black}{This endeavor has been aided by the} many couplings between the graphical representations of the Ising model that have been found - see \Cref{fig:couplings_horizontal} for an overview. \textcolor{black}{It has often proven useful to switch between several different stochastic geometric representations - that are  tied together by  compatibility relations (e.g., the spins are constant on FK clusters). }The following theorem unifies all the mentioned couplings along with some new extensions and illuminates their common structure:
\begin{theorem}\label{thm:generalcoupling}
	Let $\Omega$ and $\Sigma$ be two finite sets, $\rho$ and $\gamma$ be probability measures on $\Omega$ and $\Sigma$ respectively, and $f: \Omega \to 2^{\Sigma}$ be a function\footnote{Here, $2^{\Sigma}$ is the power set of $\Sigma$.}. For each $\eta \in \Sigma$, let
    $f^{-1}(\eta):=\{\omega \in \Omega \mid \eta \in f(\omega)\}.$
     Suppose that $ \sum_{\omega \in \Omega, \eta \in \Sigma} \rho[\omega] \gamma[\eta] \id[\eta \in f(\omega)] > 0$ and then define a coupling probability measure on $\Omega\times\Sigma$ by
	\[\mathscr{P}[\omega,\eta]\propto \rho[\omega] \gamma[\eta] \id[\eta \in f(\omega)],~\forall (\omega, \eta)\in\Omega\times\Sigma. \] 

	\begin{enumerate}
    \item[(a)] The marginal of $\mathscr{P}$ on $\Omega$, denoted by  $\mathscr{P}_{\Omega}$, satisfies $ \mathscr{P}_{\Omega}[\omega]  \propto \gamma[f(\omega)]\rho[\omega]$ for each $\omega \in \Omega$. 
		\item[(b)] The marginal of $\mathscr{P}$ on $\Sigma$, denoted by $\mathscr{P}_{\Sigma}$, satisfies $ \mathscr{P}_{\Sigma}[\eta] \propto  \rho[f^{-1}(\eta)] \gamma[\eta] $ for each $\eta \in \Sigma$.   

\item[(c)] For each $\omega\in\Omega$ with $\mathscr{P}_{\Omega}[\omega]\neq 0$,  the conditional measure $\mathscr{P}[\cdot | \omega]$ on $\Sigma$ is $\gamma[\cdot \mid f(\omega)]$.
        \item[(d)]   For each $\eta\in\Sigma$ with $\mathscr{P}_{\Sigma}[\eta]\neq 0$, the conditional measure $\mathscr{P}[\cdot|\eta] $ on $\Omega$ is $\rho[\cdot \mid f^{-1}(\eta)]$.
	\end{enumerate}
\end{theorem}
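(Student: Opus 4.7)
The plan is to prove all four statements by direct computation from the definition of $\mathscr{P}$, exploiting the observation that $\id[\eta\in f(\omega)]=\id[\omega\in f^{-1}(\eta)]$, which is the only non-trivial piece of bookkeeping.

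First, let $Z:=\sum_{(\omega,\eta)\in\Omega\times\Sigma}\rho[\omega]\gamma[\eta]\id[\eta\in f(\omega)]$ denote the normalizing constant, so that $\mathscr{P}[\omega,\eta]=Z^{-1}\rho[\omega]\gamma[\eta]\id[\eta\in f(\omega)]$. The core identity to record is the Fubini-type swap
\[
\sum_{\eta\in\Sigma}\gamma[\eta]\id[\eta\in f(\omega)]=\gamma[f(\omega)],\qquad \sum_{\omega\in\Omega}\rho[\omega]\id[\eta\in f(\omega)]=\rho[f^{-1}(\eta)],
\]
which follows immediately from the definition of $f^{-1}(\eta)$ in the statement. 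In particular, this also gives the two equivalent expressions $Z=\sum_\omega \rho[\omega]\gamma[f(\omega)]=\sum_\eta \gamma[\eta]\rho[f^{-1}(\eta)]$.

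For (a) I would sum $\mathscr{P}[\omega,\eta]$ over $\eta\in\Sigma$ for fixed $\omega$ and apply the first identity above; for (b) I would sum over $\omega\in\Omega$ for fixed $\eta$ and apply the second. For (c), assuming $\mathscr{P}_\Omega[\omega]\neq 0$ (equivalently $\gamma[f(\omega)]\neq 0$), I would form the ratio
\[
\mathscr{P}[\eta\mid\omega]=\frac{\mathscr{P}[\omega,\eta]}{\mathscr{P}_\Omega[\omega]}=\frac{\gamma[\eta]\id[\eta\in f(\omega)]}{\gamma[f(\omega)]},
\]
and recognize the right-hand side as $\gamma[\eta\mid f(\omega)]$. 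Part (d) is symmetric: the ratio $\mathscr{P}[\omega,\eta]/\mathscr{P}_\Sigma[\eta]$ is proportional to $\rho[\omega]\id[\omega\in f^{-1}(\eta)]$, normalized by $\rho[f^{-1}(\eta)]$, which is $\rho[\omega\mid f^{-1}(\eta)]$.

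There is no serious obstacle; the only point requiring a line of care is that the normalizing constant $Z$ admits both representations simultaneously, so the proportionality constants in (a), (b) and the ``$\propto$'' in the definitions of the conditionals are coherent. I would state this up front so the four parts can then be dispatched in parallel without repeating the computation of $Z$.
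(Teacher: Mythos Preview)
Your proposal is correct and is essentially identical to the paper's own proof: both compute the marginals by summing out one variable and recognising $\sum_\eta \gamma[\eta]\id[\eta\in f(\omega)]=\gamma[f(\omega)]$ and $\sum_\omega \rho[\omega]\id[\eta\in f(\omega)]=\rho[f^{-1}(\eta)]$, then obtain the conditionals as the obvious ratios. The only cosmetic difference is that you make the normalizing constant $Z$ explicit while the paper works throughout with $\propto$.
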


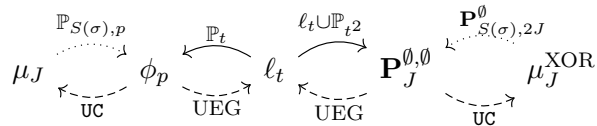
\begin{figure} 
\vspace{-0.1cm}
\captionsetup{width=.95\linewidth}
\adjustbox{scale=1,center}{\begin{tikzcd}
    \Ising_{J}\arrow[ r,bend left, dotted, "\Prb_{S(\sigma),p}"] 
    &   \phi_p      \arrow[l, dashed,bend left, "\mathtt{UC}"]  \arrow[r, bend right, dashed, "\UEG"{yshift=-10pt}]
      & \ell_t  \arrow[l,bend right,"\Prb_{t}"{yshift=+10pt}]       \arrow[r,bend left, "\ell_t \cup \Prb_{t^2}"{xshift=15pt}]
    &  \dc_J  \arrow[l, bend left, dashed, "\UEG"{xshift=15pt}] \arrow[r, dashed,bend right, "\mathtt{UC}"{yshift=-10pt,xshift=-5pt}] 
    &    \XORI_{J} \arrow[l,bend right, dotted,"\Prbcur_{S(\sigma),2J}^{\emptyset}"{yshift=14pt,xshift=-10pt}]     
\end{tikzcd}}
{\caption{Couplings of the graphical representations of the ferromagnetic Ising model on any finite graph $G=(V,E)$ with coupling constants $J=(J_e)_{e\in E}$. 
Dashed arrows indicate a uniform even subgraph ($\UEG$)  or a uniform coloring ($\mathtt{UC}$) of clusters by $+$ and $-$. Full arrows indicate a union with another percolation measure. Dotted arrows correspond to running the process on the graph of agreement edges of the spin configuration. Here, $\mu_J$ is the Ising model, $\phi_p$ the FK random-cluster model, $\Prbcur^\emptyset_{J}$, $\Prbcur^{\emptyset,\emptyset}_{J}$  the single and double random current measures,  $\ell_t$ the loop $\mathrm{O}(1)$ model (a.k.a. high-temperature expansion) and $\mathbb{P}$ is Bernoulli percolation - all to be defined below. $\XORI_{J}$ is the XOR of two independent Ising models. Original references are \cite{aizenman2019emergent,duminil2021conformal2,edwards1988generalization,evertz2002new,grimmett2007random,klausen2021monotonicity,lis2017planar,Lis,lupu2016note,swendsen1987nonuniversal}
and \Cref{prop:doublecurrent-Xor}. \Cref{thm:generalcoupling} unifies all these couplings and the corresponding couplings with sources (\Cref{fig:couplings_with_sources}). 
} 
\label{fig:couplings_horizontal}}
\end{figure}   
\begin{remark}\label{rem:gamma}
The corresponding algorithm: $\mathscr{P}_{\Sigma}$ can be sampled from a $\mathscr{P}_{\Omega}$-random configuration $\omega$ as $\gamma$ conditioned to lie in $f(\omega)$. $\mathscr{P}_{\Omega}$ can be sampled from a $\mathscr{P}_{\Sigma}$-random configuration $\eta$ as $\rho$ conditioned on $f^{-1}(\eta)$. 
In most applications, $\gamma$ is the uniform measure on $\Sigma$. Then, $\mathscr{P}[\;\cdot \mid \omega]$ is uniform on $f(\omega)$.
\end{remark}

 For a finite graph $G= (V,E)$ with coupling constants $(J_e)_{e\in E}$, the Swendsen-Wang algorithm for the Ising model is recovered by letting $\Omega = \{0,1\}^E$, $\Sigma = \{-1,+1\}^V$, $\rho = \Bernoulli_p$ be Bernoulli percolation with parameter $p_e:=1-e^{-2J_e}$, $\gamma$ be the uniform measure on $\Sigma$. Define the satisfied edges by $S(\sigma) = \{uv  \in E \mid \sigma_u = \sigma_v\} $ and $f:\Omega \rightarrow 2^{\Sigma}$ by 
$
      f(\omega) = \{ \sigma \in \Sigma \mid \omega_e = 0, \forall e \not \in S(\sigma) \}, 
$
 the map that sends $\omega$ to the set of spin configurations where all spins have the same sign on each open cluster of $\omega$. The marginal $\mathscr{P}_{\Omega}$ is the random-cluster model $\phi_p$, which satisfies $\phi_p[\omega] \propto \abs{f(\omega)} \Bernoulli_p[\omega] = 2^{\kappa(\omega)}\Bernoulli_p[\omega]$, where $\kappa(\omega)$ is the number of open clusters of $\omega$ and thus, $2^{\kappa(\omega)}$ is the total number of $\pm$-colorings of $\omega$. The marginal $\mathscr{P}_{\Sigma}$ is the Ising model. The conditional measure $\mathscr{P}[\cdot | \omega]$ is the uniform coloring by $\pm$ of the open clusters of $\omega$, and as $f^{-1}(\sigma) = \{ \omega \in \Omega \mid \omega \subset S(\sigma)\}$, then $\mathscr{P}[\cdot | \sigma] = \Bernoulli_{S(\sigma)}[\cdot]$,  Bernoulli percolation on $S(\sigma)$.

\begin{table}[ht!]
\centering
\caption{Non-exhaustive list of couplings unified in \Cref{thm:generalcoupling}. For spin models we write $\sigma$ instead of $\eta$.  In all but five cases, $\gamma$ is the uniform measure on $\Sigma$. 
The notation $[q] = \{0, \dots, q-1\}$, and $\mathcal{C}(\omega)$ is the set of clusters of $\omega$.    $\mathcal{E}_{A}(\omega)$ denotes the set of subgraphs $F$ of $\omega$ with $\partial F=A$. In particular, $\mathcal{E}_{\emptyset}$ is the set of even graphs.
$\Bernoulli$ is Bernoulli percolation, $\phi$ is the random-cluster model, $\mu$ is the Ising model, $\ell$ the loop $\mathrm{O}(1)$ model, $\Prbcur$ the (traced) single random current measure, and $\Prbcur^{A,B}$ the double random current measure with sources $A$ and $B$. 
$\Sigma^{\downarrow}(\omega)= \{\eta \in \Sigma: \eta \subset \omega\}$. 
\(\UC_\omega\) and \(\UC^q_\omega\), is the uniform $\pm$ (respectively $q$) coloring of the clusters of $\omega$,  \(\UEG_\omega\) is the uniform even subgraph of $\omega$,  \(\mathtt{UG}^A_\omega\) is the uniform subgraph with sources $A$ of $\omega$,   $\mathtt{UG}_\omega[\cdot \mid \Sigma]$ is the uniform $\Sigma$ subgraph of $\omega$, $\mathtt{U}^q_\omega$ is a uniform divergence free coloring of the edges of $\omega$ with $q$ colors. 
The lattice gauge notation is introduced in \Cref{sec:Lattice_Gauge}. For the last two couplings, notation is explained in the links in the last column. Couplings which are either new or substantially extended in this paper are highlighted with blue.}
\label{tab:coupling_overview}
\resizebox{\textwidth}{!}{\begin{NiceTabular}{|c|c|c|c|c|c|c|c|}
\hline
$\Omega$ & $\Sigma$ & $\rho[\cdot \mid f^{-1}(\eta)]$ & $\gamma[\cdot \mid f(\omega)]$ & $f(\omega)$ 
  & $\mathscr{P}_{\Omega}$ & $\mathscr{P}_{\Sigma}$ & \textbf{Names/References} \\
\specialrule{1.5pt}{0pt}{0pt}
$\{0,1\}^E$ & $[q]^V$  
 & \( \Bernoulli_{S(\sigma)}\) 
 & \(\UC^q_\omega\) 
 & $[q]^{\mathcal{C}(\omega)}$
 & \(\phi\)
 & \(\mu\)
 & Swendsen–Wang 
   \cite{swendsen1987nonuniversal}, \cite{edwards1988generalization} \\ 
\hline
$\{0,1\}^E$ & $\Even$
 & \(\Bernoulli \cup \delta_{\eta}\) 
 & \(\UEG_\omega\) 
 & $\Even(\omega)$
 & \(\phi\)
 & $\ell$
 & Ising Loop-Cluster 
   \cite{evertz2002new, grimmett2007random}  \\ 
\hline

\rowcolor{blue!10}$\{0,1\}^E$ & $\{\pm1\}^V$
 & $\Prbcur_{S(\sigma)}^{\emptyset}$
 & \(\UC_\omega\) 
 & $[2]^{\mathcal{C}(\omega)}$
 & \(\dc\)
 & \(\XORI\)
 & XOR-DC, \cite{Lis},Prop.~\ref{prop:doublecurrent-Xor} \\ 
\hline
$\{0,1\}^E$ & $\Even$
 & \scalebox{0.8}{\(\ell_t \cup \Bernoulli_{t^2}\cup \delta_{\eta}\)} 
 & \(\UEG_\omega\) 
 & $\Even(\omega)$
 & \(\dc\)
 & $\ell$
 & Loop-Current 
\cite{DC16,klausen2021monotonicity} \\ 
\specialrule{1.5pt}{0pt}{0pt}
$\{0,1\}^E$ & $\{\pm 1\}^V$
 & \scalebox{0.8}{\(\Prb_{S(\sigma)}[\cdot \mid \mathcal{F}_A]\)} 
 & \(\mathtt{UC}_\omega\)  
 & $[2]^{\mathcal{C}(\omega)}$
 & \scalebox{0.8}{\(\phi[\cdot\mid\mathcal{F}_A]\)}
 & $\mu^A$
 &  SW w. sources, Prop.~\ref{prop:randomcluster-Ising} \\ 
\hline
$\{0,1\}^E$ &$\mathcal{E}_A$
 & \(\Prb \cup \delta_{\eta}\) 
 & \(\mathtt{UG}^A_\omega\)  
 & $\mathcal{E}_A(\omega)$
 & \scalebox{0.8}{\(\phi[\cdot\mid\mathcal{F}_A]\)}
 & $\ell^A$
 & LC w. sources \cite{aizenman2019emergent}, Prop.~\ref{prop:randomcluster-loop} \\ 
\hline

\rowcolor{blue!10} $\{0,1\}^E$ & $\{\pm 1\}^V$
 & \scalebox{0.8}{$\Prbcur_{S(\sigma)}^{A\triangle B}[\cdot\mid\mathcal{F}_A]$}
 & \(\mathtt{UC}_\omega\)  
 & $[2]^{\mathcal{C}(\omega)}$
 & $\Prbcur^{A,B}$
 & \(\mu^{A,B}\)
 & XOR-DC w. sources,  Prop.~\ref{prop:doublecurrent-Xor} \hspace{-5pt} \\ 
\hline

$\{0,1\}^E$ & $\mathcal{E}_A$
 & \scalebox{0.8}{$\ell^B \cup \Prb_{t^2} \cup \delta_{\eta}$}
 & \(\mathtt{UG}^A_\omega\)  
 & $\mathcal{E}_A(\omega)$
 & $\Prbcur^{A,B}$
 & $\ell^A$
 & L-Cur. w. sources, Prop.~\ref{prop:doublecurrent-loop} \\ 
\specialrule{1.5pt}{0pt}{0pt}

\rowcolor{blue!10}$\{0,1\}^E$ & \hspace{-10pt} \scalebox{.8}{\hspace{2pt}\tiny{any}}  \scalebox{.8}{$\Sigma \subset \Omega$}
 & \( \mathbb{P} \cup \delta_{\eta}\)
 & \scalebox{0.7}{$\Prb_{\omega,\frac{x}{p+x}}[\cdot \mid \Sigma]$}
 & $\Sigma^{\downarrow}(\omega)$
 & $\mathscr{P}_{\Sigma} \cup \mathbb{P}_p$
 & $\mathbb{P}_{\frac{x}{1+x}}[\cdot \mid \Sigma]$
 & Partial conditional, Prop.~\ref{prop:partial_coupling} \\ 
\hline

\rowcolor{blue!10}$\{0,1\}^E$ & \hspace{-10pt} \scalebox{.8}{\hspace{2pt}\tiny{any}}  \scalebox{.8}{$\Sigma \subset \Omega$}
 & \( \mathbb{P} \cup \delta_{\eta} \)
 & $\mathtt{UG}_\omega[\cdot \mid \Sigma]$
 & $\Sigma^{\downarrow}(\omega)$
 & $\mathscr{P}_{\Sigma} \cup \mathbb{P}_x$
 & $\mathbb{P}_{\frac{x}{1+x}}[\cdot\mid \Sigma]$
 & Conditional perc.\ Cor.~\ref{prop:conditioned_Bernoulli1} \\ 
\hline

\rowcolor{blue!10}$\{0,1\}^E$ & \hspace{-10pt} \scalebox{.8}{\hspace{2pt}\tiny{any}}  \scalebox{.8}{$\Sigma \subset \Omega$}
 & \( \mathbb{P} \cap \delta_{\eta}\)
 & \scalebox{0.6}{$\Prb_{\frac{x(1-p)}{1+x(1-p)}}[ \cdot  \mid \Sigma^{\uparrow}(\omega)]$}
 & $\Sigma^{\uparrow}(\omega)$
 & $\mathscr{P}_{\Sigma} \cap \mathbb{P}_{p}$
 & $\mathbb{P}_{\frac{x}{1+x}}[\cdot \mid \Sigma]$
 & Partial upward, Prop.~\ref{prop:conditioned_Bernoulli2} \\ 
\specialrule{1.5pt}{0pt}{0pt}

$\{0,1\}^E$ & $\ker(\partial)$ 
 & \(\Bernoulli \cup \delta_{\eta} \) 
 & $\mathtt{U}^q_\omega$
 & $\ker(\partial^\omega)$
 & $\phi^q$
 & $\ell^q$
 & Loop-Cluster \cite{zhang2020loop}, Prop.~\ref{prop:uniform flows} \\ 
\hline

$\{0,1\}^{\Lambda_k}$ & \hspace{-8pt} $C_{k-1}$
 & $\mathbb{P}_{S(\sigma)}$
 &  $\mathtt{UC}^q_{\omega,k}$
 & $\ker(d_{k}^{\omega})$
 & $\phi^q_{\Lambda_k}$
 & $\mu_{\Lambda_k}$
 & Lat. Gauge SW, \cite{ben1990critical,duncanPRCM1,HS16} \\ 
\hline

\rowcolor{blue!10}
$\{0,1\}^{\Lambda_k}$ & $\ker (\partial_k)$ 
 & $\mathbb{P} \cup \delta_{\textrm{supp}(\eta)}$
 &  $\mathtt{U}^q_{\omega,k}$
 & $\ker(\partial_k^{\omega})$
 & $\phi^q_{\Lambda_k}$
 & $\ell^q_{\Lambda_k}$
 & Lat. Gauge LC, Prop.~\ref{prop:lattice_potts} \\ 
\specialrule{1.5pt}{0pt}{0pt}
\small{$\textrm{Im}(\partial_{d-k+1})$} & $C^{k-1}$
 & $\delta_{(d^k(\sigma))^*}$
 & $\mathtt{U}_{(*\circ d^k)^{-1}(\omega)}$
 & \small{$(*\circ d^k)^{-1}(\omega)$}
 & $\mu_{\Lambda_{d-k}}$
 & $\mu^*_{\Lambda^{k-1}}$
 & Spin model/domain wall, \ref{prop:KW} \\ 
\hline

$\Even$ & $\Even$
 & \small{$\ell_{\eta^c, n-m}\cup \delta_{\eta}$} 
 & $\mathbb{P}_{\mathcal{C}(\omega),\frac{m}{n}}$ 
 & $\Even(\omega)$ 
 & $\ell_{n}$
 & \tiny{ $\propto Z_{\eta^c,n-m}\cdot \textrm{d}\ell_{m}$}
 &\small{Loop $\mathrm{O}(n)$ 1+1=2 } \cite{Glazman2021Log}, \ref{sec:loopO(n)} \hspace{-3pt} \\
\hline
\end{NiceTabular}}
\end{table}

 The proof of Theorem \ref{thm:generalcoupling} is a short calculation:
\begin{proof}
	The marginal measure on $\Sigma$ is
	\[\mathscr{P}_{\Sigma}[\eta]=\sum_{\omega\in\Omega} \mathscr{P}[\omega,\eta]\propto \sum_{\omega\in\Omega} \rho[\omega] \gamma[\eta] \id[\eta\in f(\omega)]= \gamma[\eta] \rho[\{\omega\in\Omega \mid \eta\in f(\omega)\}]=\gamma[\eta] \rho[f^{-1}(\eta)].\]
	Similarly, the marginal measure on $\Omega$ is
	\[\mathscr{P}_{\Omega}[\omega]=\sum_{\eta\in \Sigma} \mathscr{P}[\omega,\eta] \propto \sum_{\eta\in \Sigma} \rho[\omega] \gamma[\eta] \id[\eta\in f(\omega)]=\rho[\omega] \gamma[\{\eta \in \Sigma \mid \eta \in  f(\omega)\}]=\rho[\omega] \gamma[f(\omega)].\]
	Furthermore, the conditional measures are straightforwardly calculated: 
	\[\mathscr{P}[\eta \mid \omega]=\frac{\mathscr{P}[\omega,\eta]}{\mathscr{P}_{\Omega}[\omega]} \propto \frac{\rho[\omega] \gamma[\eta] \id[\eta\in f(\omega)]}{\rho[\omega] \gamma[f(\omega)]}=\gamma[\eta \mid f(\omega)],\]
    \[ \mathscr{P}[\omega \mid \eta]=\frac{\mathscr{P}[\omega, \eta]}{\mathscr{P}_{\Sigma}[\eta]} \propto \frac{\rho[\omega] \gamma[\eta] \id[\eta\in f(\omega)]}{\gamma[\eta] \rho[f^{-1}(\eta)]}=\rho[\omega \mid f^{-1}(\eta)],\]
which was what we wanted.
\end{proof}


The structure of the paper goes as follows: In \Cref{sec:Ising_sec}, generalized versions of the couplings of the Ising model in \Cref{fig:couplings_horizontal} involving the double random current are stated (see also the generalization to the case with sources in \Cref{fig:couplings_with_sources}). Part (b) of \Cref{prop:doublecurrent-Xor} is new and so is \Cref{prop:doublecurrent-loop} with $A\neq \emptyset$ and $B \neq \emptyset$, see \Cref{sec:Ising_sec} for further details. 
In \Cref{sec:Bernoulli}, we discuss some new couplings of general conditioned Bernoulli percolation. In \Cref{sec:Potts}, we show how the loop representation from \cite{zhang2020loop} encodes the phase transition of the Potts model and an extension of the coupling to lattice gauge theories is given in \Cref{sec:Lattice_Gauge} with the new lattice gauge loop cluster coupling in \Cref{prop:lattice_potts} as a highlight. Finally, some open problems are discussed in \Cref{sec:questions}. Some additional proofs, details and other well-known couplings that fit into \Cref{thm:generalcoupling} are postponed to the appendices.

There is an overview of some of the couplings unified by \Cref{thm:generalcoupling} in \Cref{tab:coupling_overview} - including the novel ones from this paper (in blue). Through \Cref{thm:generalcoupling}, the table highlights the common structure of the couplings, which can hopefully both help put them to use and aid in the derivation of new couplings. 

In \Cref{fig:couplings_horizontal} and \Cref{tab:coupling_overview}, we used the union and intersection of two probability measures   $\pi$ and $\nu$ defined on the same space $\{0,1\}^E$, their union $\pi \cup \nu$ (intersection $\pi \cap \nu$, respectively) is defined to be the union (intersection, respectively) of open edges from independent samples of $\pi$ and $\nu$.

Two recurring themes in the application of \Cref{thm:generalcoupling} which might not be apparent from the theorem statement go as follows: In both, $\Omega$ is a set of graphs and the measure $\rho = \mathbb{P}$ is Bernoulli percolation. First, if $f^{-1}(\eta)=\{\omega \in \Omega \mid \omega \subset S(\eta) \}$ for some function $S: \Sigma \to \Omega,$ then the conditional measure $\mathscr{P}[\;\cdot \mid \eta]$ may be written $\mathbb{P}_{S(\eta)}[\cdot]$ - that is, Bernoulli percolation on the set $S(\eta)$.
In the second case, $\Sigma \subset \Omega$ and $f^{-1}(\eta) = \{\omega \in \Omega \mid \omega \supset \eta \}$, then $\mathscr{P}[\;\cdot \mid \eta] =\mathbb{P}  \cup \delta_{\eta}$, 
where $\delta_{\eta}$ is the Dirac measure on $\eta$. By the law of total probability, $\mathscr{P}_{\Omega} = \Bernoulli \cup \mathscr{P}_{\Sigma}$, i.e. $\mathscr{P}_{\Omega}$ is obtained by Bernoulli-sprinkling $\mathscr{P}_{\Sigma}$. Both cases appear several times through the paper: See, e.g., \Cref{fig:couplings_horizontal} and \Cref{tab:coupling_overview}. The above observations are further elaborated upon in \Cref{sec:Ising_proofs}.

\section{Graphical representations of the Ising model}\label{sec:Ising_sec} 
We first recall the definitions of the Ising model and its graphical representations - see \cite{DC17} for a more complete introduction. 
Let \( G = (V, E) \) be a finite undirected graph with pairwise interactions \( \{ J_e > 0 \mid e \in E \} \). 
The Hamiltonian of the Ising model is defined through 
$$
H(\sigma) = - \sum_{uv \in E} J_{uv} \sigma_u \sigma_v. 
$$
The Ising probability measure, $\Ising_{G,J}$, is a measure on $\{-1,1\}^V$ defined  by 
\begin{align*}
	\Ising_{G,J}[\sigma] = \frac{\exp\left[- H(\sigma)\right]}{Z_{G,J}},
\end{align*}
where we have absorbed the temperature into \( \{ J_e > 0 \mid e \in E \} \) and $Z_{G,J}$ is the partition function
$$
Z_{G,J} = \sum_{\sigma \in \{-1,1\}^V} \exp\left[- H(\sigma)\right].
$$

For $\omega\subset E,$ let $\partial \omega$ denote the set of $v\in V$ with odd degree in the spanning subgraph $(V,\omega)$ (see \Cref{sec:Potts} for a more general definition). We refer to $\partial \omega$ as the \textit{sources} of $\omega$.
Let $\mathcal{E}_A(G) = \{ \omega \subset E \mid \partial \omega = A\}$ be the graphs with sources $A$. The sourceless configurations $\mathcal{E}_\emptyset(G)$ will be referred to as the even subgraphs of $G$. Let
\begin{align}\label{eq:FA}
\mathcal{F}_A = \{\omega \subset E \mid \exists F \subset \omega, \partial F = A \} 
\end{align}
denote the event consisting of all subgraphs that have a subgraph with sources $A$ or equivalently  the set of bond configurations $\omega$ for which each open cluster of $\omega$ intersects $A$ an even number of times (could be $0$). For any $\gamma$ with $\partial \gamma = A,$ considering the bijection $F \mapsto F \triangle \gamma$ from $\mathcal{E}_\emptyset(\omega)$  to $\mathcal{E}_A(\omega)$ proves the useful switching principle which appeared in \cite{griffiths1970concavity}:
\begin{align}\label{eq:switching_principle}
\abs{\mathcal{E}_A(\omega)} = \id[\omega \in \mathcal{F}_A]\abs{\mathcal{E}_\emptyset(\omega)}
\end{align} 
Let us recall the high-temperature expansion of the Ising model, 
\begin{align*}
	Z_{G,J} &:= \sum_{\sigma} \exp \left( \sum_{uv \in E}  J_{uv} \sigma_u \sigma_v \right)
	= \sum_{\sigma} \prod_{uv \in E} (\cosh( J_{uv}) + \sigma_u \sigma_v \sinh(J_{uv}))\\
	&= \left(\prod_{e \in E} \cosh(J_{e})\right) \sum_{\sigma} \prod_{uv \in E} (1 + \sigma_u \sigma_v \tanh(J_{uv}))
	= \prod_{e \in E} \cosh(J_{e}) \, 2^{|V|} \hspace{-10pt}\sum_{F \subset E, \partial F = \emptyset}\hspace{-2pt} \prod_{e \in F} \tanh(J_{e}).
\end{align*}

Since the expression $\tanh(J_e)$ will appear repeatedly, we follow the convention from \cite{Lis} (but use $t$ instead of $x$ as a shorthand for $\tanh$) and let $t_e= \tanh(J_e)$.

The expression for the partition function motivates the definition of the high-temperature expansion (also known as the loop $\mathrm{O}(1)$) probability measure $\ell_{G,t}$ on $\{0,1\}^E$ defined by 
\begin{align}\label{eq:loopO(1)def}
	\ell_{G,t}[\omega] \propto   \prod_{e \in \omega}t_e 
\id[\partial \omega = \emptyset]. 
\end{align}
For $t_e>1$, the definition also makes sense and will be related to the anti-ferromagnetic Ising model in \Cref{sec:anti_ferromagnetic_Ising}. 
More generally, for a set of sources $A \subset V$ (with $\abs{A}$ even) define $\ell^A$ on $\{0,1\}^E$ 
through  
\begin{align*}
	\ell_{G,t}^A[\omega] \propto  \prod_{e \in \omega}t_e \id[\partial \omega = A].
\end{align*}
We will often consider ourselves free to consider $\ell_{G,t}^A$ as a measure on its support, $\mathcal{E}_A(G),$ instead.

We will be concerned with the two-parameter family $\ell^A_{G,t} \cup \mathbb{P}_{G,p}$, that is, the loop $\mathrm{O}(1)$ model with parameter $t=(t_e)_{e\in E}$ union Bernoulli percolation with parameter $p=(p_e)_{e\in E}$ - see \Cref{sec:l1Bf} for more details on this family. 
The random current model can be defined by taking independent $\text{Poisson}(J_e)$-multi-edges on each edge $e\in E$ conditioned on the degree of every vertex being even. More generally, for the single random current with sources $A\subset V,$ one has to condition on the set of vertices with odd degree to be $A$. We are mostly interested in the open (i.e., nonzero) edges from random current models. We write $\Prbcur^{A}_{G,J}$ for the push forward of the random current with sources  $A$  under the map which sends all positive integers to $1$ and $0$ to $0$. One may check (see, e.g., \cite[Theorem 3.2]{aizenman2019emergent}) that
\begin{equation}\label{eq:definition_single_current}
    \Prbcur^A_{G,J}=\ell^A_{G,t} \cup \Prb_{G,1-\sqrt{1-t^2}},
\end{equation}
 where $1-\sqrt{1-t^2}$ is the vector $(1-\sqrt{1-t_e^2})_{e\in E}$.
The double random current with sources $A$ and $B$ is the union of two independent single random currents
\begin{align}\label{eq:definition_double_current}
	\Prbcur_{G,J}^{A,B} :=\Prbcur_{G,J}^{A} \cup \Prbcur_{G,J}^{B}= \loopmeasure_{G,t}^A \cup \loopmeasure_{G,t}^B \cup \Bernoulli_{G,t^2}.
\end{align}

The ($q=2$) random-cluster model, $\phi_{G,p}$, can also be obtained as $\ell_{G,t} \cup \mathbb{P}_{G,t}$, since 
\begin{align}\label{eq:definition_random_cluster}
\begin{split}
\ell_{G,t} \cup \mathbb{P}_{G,t}[\omega] &\propto \sum_{F\in \Even(\omega)} \prod_{e \in F}t_e   \prod_{e \in \omega \setminus F} t_e  \prod_{e \in E \setminus \omega}(1-t_e)  =   \prod_{e \in \omega} t_e \prod_{e \in E \setminus \omega}  (1-t_e) \sum_{F\in \Even(\omega)} 1 \\
&
\propto 2^{\kappa(\omega)}\prod_{e \in \omega} p_e \prod_{e \in E \setminus \omega} (1-p_e) \propto \phi_p[\omega],      
\end{split}
\end{align}
where  $p=(p_e)_{e\in E}:=(1-e^{-2J_e})_{e\in E}$, and we used the well known formula for the cyclomatic number (see also \Cref{lemma:Counting_divergence})
\begin{align}\label{eq:even_formula}
	\abs{\Even(\omega)} = 2^{\kappa(\omega) + \abs{\omega} - \abs{V}}.
\end{align}

\subsubsection{Notation}
We always fix a finite graph $G=(V,E)$ with coupling constants $(J_e)_{e\in E}$ and consider various spin-type measures (i.e., defined on $S^V$ for some finite set $S$) or bond-type measures (i.e., defined on $\{0,1\}^E$). Following the convention in the literature, random-cluster measures are parametrized by $p=(p_e)_{e\in E}$ with $p_e:=1-e^{-2J_e}$, and loop O(1) measures are parametrized by $t=(t_e)_{e\in E}$ with $t_e:=\tanh(J_e)$; all other measures related to the Ising model are parametrized by $J=(J_e)_{e\in E}$ directly. From now on, we drop the dependence on $G$ and $J$ (or $p$, or $t$) when these are clear from the context. If a measure is defined on a proper subgraph of $G$ and/or the couplings are different from $(J_e)_{e\in E}$, we write the subgraph and corresponding parameter explicitly. We do not distinguish between a subset of edges $F\subset E$ and the spanning subgraph $(V,F)$ it induces in $G$. Each $\omega\in\{0,1\}^E$ is identified with $\omega^{-1}(\{1\})\subset E$.  If $F_1\subset F_2 \subset E$, we may view $F_2\setminus F_1$ as the subgraph $(V,F_2\setminus F_1)$.

\subsection{Ising coupling measures}

\begin{figure}[ht] {\begin{tikzcd}
    \Ising_{J}^A \arrow[d,bend left, dotted, "\Prb_{S(\sigma),p}\lbrack \cdot \mid \mathcal{F}_A\rbrack"] & & \mu_{J}^{A,B} \arrow[d,bend left, dotted, "\Prbcur_{S(\sigma),2J}^{A \triangle B}\lbrack\cdot \mid \mathcal{F}_A\rbrack"{yshift=-4pt}] \\
    \phi_p[\;\cdot \mid \mathcal{F}_A] \arrow[rd, dashed, "\mathtt{UG}^A", swap] \arrow[u, dashed,bend left, "\mathtt{UC}"]  & \Prbcur_J^{A} \arrow[r] \arrow[l] & \Prbcur_J^{A,B} \arrow[ld, dashed, "\mathtt{UG}^A"] \arrow[u,bend left, dashed, "\mathtt{UC}"] &  \hspace{-1.2cm} = \Prbcur_J^{A \triangle B,\emptyset}[\;\cdot \mid \mathcal{F}_A] \\
                             & \ell_t^A \arrow[u]        &                                  \\
\end{tikzcd}}
{\caption{Generalized coupling measures between the graphical representations of the Ising model in \Cref{fig:couplings_horizontal} to include sources $A \subset V$. The measures $\mu_J^A$ and $\mu_J^{A,B}$ have, to our knowledge, not been studied in the literature before, but they reduce to the Ising model and XOR model when $A=\emptyset$ and $B =\emptyset$. $\mu^A[\sigma] \propto \Bernoulli_{S(\sigma),p}[\mathcal{F}_A] \mu[\sigma]$ and 
$
 \mu^{A,B}[\sigma] \propto \langle \tau_{A \triangle B} \rangle_{S(\sigma),2J} \Prbcur^{A\triangle B}_{S(\sigma),2J}[\mathcal{F}_A]  \XORI[\sigma]
$. $\mathtt{UC}$ is uniform $\pm$-coloring of clusters. 
$\mathtt{UG}^A$ is the uniform graph with sources $A$. We exhibit the couplings as a special case of \Cref{thm:generalcoupling} in Propositions 
\ref{prop:doublecurrent-Xor}, \ref{prop:doublecurrent-loop}, \ref{prop:randomcluster-Ising}, \ref{prop:randomcluster-loop}. 
\label{fig:couplings_with_sources}}}
\end{figure}
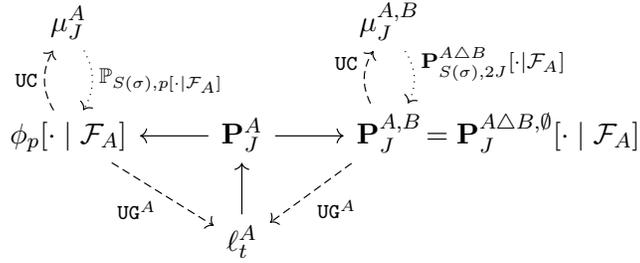

In this section, we state two new couplings for graphical representations of the Ising model which are consequences of Theorem \ref{thm:generalcoupling}.    Their proofs are given in \Cref{sec:Ising_proofs} along with two more couplings. See \Cref{fig:couplings_with_sources} for an illustration of the four couplings.
All the known coupling measures relating the Ising model, random-cluster model, loop $\mathrm{O}(1)$ model, double random current model and XOR-Ising model are special cases of \Cref{thm:generalcoupling}.
Our first application concerns the double random current model and the XOR Ising model.

\begin{proposition}\label{prop:doublecurrent-Xor}
	Let $G=(V,E)$ be a finite connected graph. Let $\Omega:=\{0,1\}^E$ be the space of bond configurations and $\Sigma:=\{-1,+1\}^V$ be the space of spin configurations. Let $f:\Omega\to 2^{\Sigma}$ be given by $f(\omega)=\{\sigma\in \Sigma\mid \omega\subset S(\sigma)\}$ (so $|f(\omega)|=2^{\kappa(\omega)}$). For any $A, B\subset V$ with $|A|, |B|$ even, let
	\[\rho[\omega] \propto \Prbcur^{A\triangle B}_{2J}[\omega \mid \mathcal{F}_A],~\forall \omega\in\Omega,~\gamma[\sigma]\propto 1, ~\forall \sigma\in \Sigma.\]
	Then, under the coupling measure $\mathscr{P}$ defined in Theorem \ref{thm:generalcoupling}, 
	\begin{enumerate}
		\item[(a)] The marginal of $\mathscr{P}$ on $\Sigma$ is $\mu^{A,B}$ defined by
		\[\mu^{A,B}[\sigma] \propto \langle \tau_{A \triangle B} \rangle_{S(\sigma),2J} \Prbcur^{A\triangle B}_{S(\sigma),2J}[\mathcal{F}_A]  \XORI[\sigma],\]
		where $ \langle \tau_{A \triangle B} \rangle_{S(\sigma),2J}$ is the expectation of the product of Ising spins  $\tau_{A \triangle B} = \prod_{x\in A \triangle B}\tau_x$, $\tau_x \in \{-1,1\}$  under the Ising measure defined on $S(\sigma)$ with couplings $(2J_e)_{e\in E}$. For each $\omega\in\Omega$ with  $\omega\in\mathcal{F}_A \cap \mathcal{F}_B$, the conditional measure $\mathscr{P}[\;\cdot \mid \omega]$ on $\Sigma$ is realized by tossing independent fair coins to get $\pm$ for each open cluster of $\omega$.
		\item[(b)] The marginal of $\mathscr{P}$ on $\Omega$ is $\Prbcur^{A,B}$.  For each $\sigma\in\Sigma$ with $\mu^{A,B}[\sigma] \neq 0$, the conditional measure $\mathscr{P}[\;\cdot \mid \sigma] = \Prbcur^{A\triangle B}_{S(\sigma),2J}[\;\cdot \mid \mathcal{F}_A]$, the single random current measure on $S(\sigma)$ with sources $A\triangle B$ and couplings $2J=(2J_e)_{e\in E}$ conditioned on $\mathcal{F}_A$.
	\end{enumerate}
\end{proposition}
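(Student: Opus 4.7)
The plan is to apply Theorem~\ref{thm:generalcoupling} with the specified $\rho$, $\gamma$, and $f$, and then identify each of the four resulting objects (two marginals and two conditionals) with the measures appearing in the statement. The two conditional identifications are immediate; the two marginal identifications each reduce to one combinatorial input (the single--double current identity for $\Omega$, and the XOR-Ising expansion for $\Sigma$).

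Since $\gamma$ is uniform, Theorem~\ref{thm:generalcoupling}(c) immediately gives that $\mathscr{P}[\cdot \mid \omega]$ is uniform on $f(\omega)=\{\sigma:\omega\subset S(\sigma)\}$, which is precisely independent fair $\pm$-colouring of the clusters of $\omega$. For Theorem~\ref{thm:generalcoupling}(d), note that $f^{-1}(\sigma)=\{\omega : \omega\subset S(\sigma)\}$ and that the multigraph weights defining a single random current only involve edges in the support of the multigraph. Hence restricting $\Prbcur^{A\triangle B}_{2J}$ on $G$ to configurations supported in $S(\sigma)$ and renormalising produces $\Prbcur^{A\triangle B}_{S(\sigma),2J}$; composing with the $\mathcal{F}_A$-conditioning gives $\mathscr{P}[\cdot \mid \sigma]=\Prbcur^{A\triangle B}_{S(\sigma),2J}[\cdot \mid \mathcal{F}_A]$.

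For $\mathscr{P}_\Omega$, Theorem~\ref{thm:generalcoupling}(a) yields $\mathscr{P}_\Omega[\omega]\propto 2^{\kappa(\omega)}\Prbcur^{A\triangle B}_{2J}[\omega\mid\mathcal{F}_A]$, so it suffices to establish
\[
\Prbcur^{A,B}[\omega] \;\propto\; 2^{\kappa(\omega)}\,\id[\omega\in\mathcal{F}_A]\,\Prbcur^{A\triangle B}_{2J}[\omega].
\]
I plan to verify this at the multigraph level: the traced double current is the law of $\mathrm{supp}(n_1+n_2)$ where $(n_1,n_2)\in(\mathbb{N}^E)^2$ has weight $\prod_e J_e^{(n_1)_e+(n_2)_e}/((n_1)_e!(n_2)_e!)$ subject to $\partial n_1=A,\,\partial n_2=B$. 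Fixing $n:=n_1+n_2$ and summing over valid splittings produces a factor $\prod_e\binom{n_e}{(n_1)_e}$ times the number of sub-multigraphs $n_1\subset n$ with $\partial n_1=A$. By the switching principle \eqref{eq:switching_principle} and the formula \eqref{eq:even_formula} applied to the multigraph $n$, this number equals $\id[\omega\in\mathcal{F}_A]\cdot 2^{\kappa(\omega)+|n|-|V|}$ with $\omega:=\mathrm{supp}(n)$. The factor $2^{|n|}$ converts $J_e^{n_e}$ into $(2J_e)^{n_e}$, and tracing over $n$ with fixed support recovers $\Prbcur^{A\triangle B}_{2J}[\omega]$, as required.

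For $\mathscr{P}_\Sigma$, Theorem~\ref{thm:generalcoupling}(b) gives $\mathscr{P}_\Sigma[\sigma]\propto \Prbcur^{A\triangle B}_{2J}[\{\omega\subset S(\sigma)\}\cap\mathcal{F}_A]$, which rearranges (up to a $\sigma$-independent constant) to $Z^{\mathrm{SC}}_{A\triangle B}(S(\sigma),2J)\cdot\Prbcur^{A\triangle B}_{S(\sigma),2J}[\mathcal{F}_A]$, where $Z^{\mathrm{SC}}_A(H,K)$ denotes the single-current partition function on $H$ with sources $A$ and couplings $K$. The standard high-temperature identity $\langle\tau_A\rangle_{G,J}=Z^{\mathrm{SC}}_A(G,J)/Z^{\mathrm{SC}}_\emptyset(G,J)$ combined with the XOR-Ising computation $\XORI[\sigma]\propto\sum_{\tau}\mu_J[\tau]\mu_J[\sigma\tau]\propto Z^{\mathrm{Ising}}(S(\sigma),2J)\propto Z^{\mathrm{SC}}_\emptyset(S(\sigma),2J)$ (using $1+\sigma_u\sigma_v=2\cdot\id[uv\in S(\sigma)]$) rewrites the displayed expression as $\langle\tau_{A\triangle B}\rangle_{S(\sigma),2J}\,\Prbcur^{A\triangle B}_{S(\sigma),2J}[\mathcal{F}_A]\,\XORI[\sigma]$, matching $\mu^{A,B}[\sigma]$. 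The main obstacle is the single--double current identity used for $\mathscr{P}_\Omega$; its crux is the clean application of \eqref{eq:switching_principle} and \eqref{eq:even_formula} to the combined multigraph $n_1+n_2$, and everything else is bookkeeping.
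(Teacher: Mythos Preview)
Your proposal is correct. The one genuine departure from the paper is the derivation of the $\Omega$-marginal identity
\[
\Prbcur^{A,B}[\omega]\;\propto\;2^{\kappa(\omega)}\,\id_{\mathcal{F}_A}(\omega)\,\Prbcur^{A\triangle B}_{2J}[\omega].
\]
The paper obtains this entirely at the traced level: Lis's formula (Proposition~\ref{prop:Lisformula}) gives $\Prbcur^{A,B}[\omega]\propto \id_{\mathcal{F}_A}(\omega)\,|\mathcal{E}_\emptyset(\omega)|\,(\ell^{A\triangle B}\cup\mathbb{P}_{t^2})[\omega]$, and then the reweighting identity~\eqref{eq:reweight_identity} absorbs the factor $2^{|\omega|}$ coming from $|\mathcal{E}_\emptyset(\omega)|$ into the couplings, yielding~\eqref{eq:double_J}. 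You instead work at the multigraph level, summing over splittings of $n=n_1+n_2$ and applying the switching principle~\eqref{eq:switching_principle} and the cyclomatic formula~\eqref{eq:even_formula} to the multigraph $\hat{n}$ (whose clusters coincide with those of $\omega=\mathrm{supp}(n)$, so $\kappa(\hat n)=\kappa(\omega)$); the factor $2^{|n|}$ turns $J$ into $2J$ directly. Your route is shorter and closer to the classical Aizenman switching-lemma argument; the paper's route has the virtue of staying inside the traced $\ell\cup\mathbb{P}$ family (a theme of the paper) and of isolating the reusable Lemma behind~\eqref{eq:reweight_by_two}. For $\mathscr{P}_\Sigma$ your computation is essentially the paper's (cf.\ Lemma~\ref{lem:XOR_Ising_as_partition_function}), and your treatment of the conditionals matches the general observations in \S\ref{sec:simplifications_of_conditional}.
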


When $A=B=\emptyset$, part (a) of Proposition \ref{prop:doublecurrent-Xor} says that one can obtain a XOR-Ising configuration by independently assigning $\pm 1$ to the open clusters of a double random current measure; this is proved in \cite[Corollary 3.11 and Remark 3.12]{Lis} when $G$ is a planar graph (see also \cite[Corollary 3.3]{duminil2021conformal}). Part (b) of Proposition \ref{prop:doublecurrent-Xor} is new; Part (a) with $A\neq \emptyset$ and/or $B\neq \emptyset$ is also new. 

Let $\Prbcur^{\emptyset}_{\mathbb{Z}^d,\beta}$ (respectively, $\Prbcur^{\emptyset,\emptyset}_{\mathbb{Z}^d,\beta}$) be the single (respectively, double) current defined on $\mathbb{Z}^d$ with coupling constants $J_e=\beta$ for each nearest neighbor edge $e$ in $\mathbb{Z}^d$. Proposition \ref{prop:doublecurrent-Xor} together with the law of total probability imply that $$\Prbcur^{\emptyset,\emptyset}_{\mathbb{Z}^d,\beta}[x\longleftrightarrow y] = \XORI[\sigma_x \sigma_y] =\langle \sigma_x\sigma_y\rangle_{\mathbb{Z}^d,\beta}^2,$$ as is well-known from the switching lemma. This proof only uses the switching principle \eqref{eq:switching_principle} on the traced graph and not the multi-graph. One straightforward conclusion from this is that $\Prbcur^{\emptyset,\emptyset}_{\mathbb{Z}^d,\beta}$ percolates if and only if $\beta>\beta_c(d)$ where $\beta_c(d)$ is the critical inverse temperature. Curiously, part (b) of Proposition~\ref{prop:doublecurrent-Xor}  suggests that $\Prbcur^{\emptyset}_{\mathbb{Z}^d,\beta}$ percolates whenever $\beta>2\beta_c(d)$. More formally, if percolation of the sourceless single random current is monotone in the domain for subgraphs\footnote{Percolation of the single random current is in general not monotone in the domain. For a counterexample, for any constant $\beta,$ one may amend the construction in  
\cite{hansen2024nonuniquenessphasetransitionsgraphical} to find (countable, locally finite) graphs $\mathbb{G}' \subset \mathbb{G}$ such that the single current percolates on $\mathbb{G}'$, but not on $\mathbb{G}$. It also follows that there exists a finite graph $G$ and a set of sources $A$ such that $\Prbcur^{A}[v \cc w] < \Prbcur^{\emptyset}[v \cc w]$.} of $\mathbb{Z}^d$, then $\Prbcur^{\emptyset}_{\mathbb{Z}^d,\beta}$  percolates if $\beta>2\beta_c(d)$. See \Cref{sec:questions} for a discussion of related questions. 
        
The next application of Theorem \ref{thm:generalcoupling} is about uniform subgraphs:
\begin{proposition}\label{prop:doublecurrent-loop}
	Let $G=(V,E)$ be a finite connected graph and fix     $A\subset V$ with $|A|$ even, $\Omega:=\{0,1\}^E$ and $\Sigma:=\{F\subset E \mid \partial F=A\}$ be the set of subgraphs of $G$ with sources $A$. Let $f:\Omega\to 2^{\Sigma}$ be defined by $f(\omega)=\{F\subset \omega \mid \partial F=A\}$ and
	\[\rho[\omega]\propto \id[\omega\in\mathcal{F}_A] (\ell^{A \triangle B} \cup \Prb_{t^2})[\omega],~\forall \omega\in \Omega, ~\gamma[\eta]\propto 1, ~\forall \eta\in \Sigma.\]
	Then, under the coupling measure $\mathscr{P}$ defined in Theorem \ref{thm:generalcoupling}, 
\begin{enumerate}
		\item[(a)] The marginal of $\mathscr{P}$ on $\Sigma$ is $\ell^{A}$. For each $\omega\in\Omega$ with $\omega\in\mathcal{F}_A \cap \mathcal{F}_B$, the conditional measure is given by  $\mathscr{P}[\;\cdot \mid \omega] = \mathtt{UG}^A[\omega]$, the uniform $\omega$-subgraph with sources $A$.
		\item[(b)] The marginal of $\mathscr{P}$ on $\Omega$ is $\Prbcur^{A,B}$.  For each $\eta\in\Sigma$, the conditional measure $\mathscr{P}[\;\cdot \mid \eta]$ is $\ell^B \cup \Prb_{t^2} \cup \delta_{\eta}$ where $\delta_{\eta}$ is the Dirac measure concentrated at $\eta$.
	\end{enumerate}
In particular, the uniform graph with sources $A$ of the double random current measure with sources $A$ and $B$ is the loop $\mathrm{O}(1)$ model with sources $A$. 
\end{proposition}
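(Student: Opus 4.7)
The plan is to apply \Cref{thm:generalcoupling} directly and then identify the resulting coupling $\mathscr{P}$ with the ``natural'' coupling $\mathscr{Q}$ obtained by independently sampling $F_A\sim\ell^A$, $F_B\sim\ell^B$, $F'\sim\Prb_{t^2}$ and setting $(\omega,\eta):=(F_A\cup F_B\cup F',\,F_A)$. Under $\mathscr{Q}$, the $\omega$-marginal is $\Prbcur^{A,B}$ by \eqref{eq:definition_double_current}, the $\eta$-marginal is tautologically $\ell^A$, and the conditional of $\omega$ given $\eta$ is $\ell^B\cup\Prb_{t^2}\cup\delta_\eta$; so once $\mathscr{P}=\mathscr{Q}$ is established, the proposition follows.

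The uniform conditional in (a) is immediate from \Cref{thm:generalcoupling}(c): $\mathscr{P}[\cdot\mid\omega]=\gamma[\cdot\mid f(\omega)]$ is uniform on $f(\omega)=\mathcal{E}_A(\omega)$, which is $\mathtt{UG}^A[\omega]$ by definition. For the $\omega$-marginal, \Cref{thm:generalcoupling}(a) together with \eqref{eq:switching_principle} gives
\[\mathscr{P}_\Omega[\omega]\propto|\mathcal{E}_A(\omega)|\,(\ell^{A\triangle B}\cup\Prb_{t^2})[\omega],\]
the indicator $\id[\omega\in\mathcal{F}_A]$ being absorbed into $|\mathcal{E}_A(\omega)|$. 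To identify this with $\Prbcur^{A,B}$ I expand both $(\ell^{A\triangle B}\cup\Prb_{t^2})[\omega]$ and $(\ell^A\cup\ell^B\cup\Prb_{t^2})[\omega]$ by summing out the Bernoulli component; each edge $e\in\omega$ contributes $t_e$ if it lies in the symmetric difference of the loop pieces and $t_e^2$ otherwise, while $e\notin\omega$ contributes $(1-t_e^2)$. This yields
\[(\ell^A\cup\ell^B\cup\Prb_{t^2})[\omega]\propto\prod_{e\notin\omega}(1-t_e^2)\sum_{(F_A,F_B)\in\mathcal{E}_A(\omega)\times\mathcal{E}_B(\omega)}\prod_{e\in F_A\triangle F_B}t_e\prod_{e\in\omega\setminus(F_A\triangle F_B)}t_e^2,\]
and analogously for $(\ell^{A\triangle B}\cup\Prb_{t^2})[\omega]$ with a single sum over $H\in\mathcal{E}_{A\triangle B}(\omega)$. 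The reparametrization $(F_A,F_B)\mapsto(F_A,\,H:=F_A\triangle F_B)$ is a bijection $\mathcal{E}_A(\omega)\times\mathcal{E}_B(\omega)\to\mathcal{E}_A(\omega)\times\mathcal{E}_{A\triangle B}(\omega)$ whose summand depends only on $H$, so the first sum equals $|\mathcal{E}_A(\omega)|$ times the second, giving $\mathscr{P}_\Omega=\Prbcur^{A,B}$.

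To conclude $\mathscr{P}=\mathscr{Q}$, it remains to check that $\mathscr{Q}[F_A=\cdot\mid\omega]$ is also uniform on $\mathcal{E}_A(\omega)$. The joint weight $\mathscr{Q}[\omega,F_A]\propto\ell^A[F_A](\ell^B\cup\Prb_{t^2}\cup\delta_{F_A})[\omega]$ expands, after summing out $F'$ and applying the same substitution $H=F_A\triangle F_B$, to
\[\prod_{e\notin\omega}(1-t_e^2)\sum_{H\in\mathcal{E}_{A\triangle B}(\omega)}\prod_{e\in H}t_e\prod_{e\in\omega\setminus H}t_e^2,\]
which is manifestly independent of $F_A\in\mathcal{E}_A(\omega)$. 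Hence $\mathscr{P}=\mathscr{Q}$, and reading off $\mathscr{Q}$ gives $\mathscr{P}_\Sigma=\ell^A$ and $\mathscr{P}[\cdot\mid\eta]=\ell^B\cup\Prb_{t^2}\cup\delta_\eta$, completing both (a) and (b). The main obstacle is the combinatorial identity realised by the bijection $(F_A,F_B)\leftrightarrow(F_A,F_A\triangle F_B)$, which is the switching principle for random currents in disguise; everything else is a mechanical unpacking of \Cref{thm:generalcoupling}.
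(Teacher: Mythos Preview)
Your argument is correct. The core combinatorial input---the bijection $(F_A,F_B)\leftrightarrow(F_A,F_A\triangle F_B)$ on $\mathcal{E}_A(\omega)\times\mathcal{E}_B(\omega)$---is exactly what the paper uses, so in that sense the proofs are close cousins. The organization, however, is genuinely different. The paper computes each item separately: $\mathscr{P}_\Sigma$ and $\mathscr{P}[\cdot\mid\eta]$ by direct expansion through \Cref{thm:generalcoupling}, and $\mathscr{P}_\Omega$ by invoking Lis's formula (stated and proved as a standalone proposition). You instead package everything into the auxiliary coupling $\mathscr{Q}$: once you establish $\mathscr{P}_\Omega=\Prbcur^{A,B}=\mathscr{Q}_\Omega$ (essentially reproving Lis's formula inline) and that $\mathscr{Q}[\cdot\mid\omega]$ is uniform on $\mathcal{E}_A(\omega)$, the identification $\mathscr{P}=\mathscr{Q}$ gives both the $\Sigma$-marginal and the conditional $\mathscr{P}[\cdot\mid\eta]$ for free, bypassing the paper's explicit computation of the latter. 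Your route is a bit more conceptual and self-contained; the paper's is more modular, since Lis's formula is reused elsewhere.
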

Proposition \ref{prop:doublecurrent-loop} gives a coupling between the loop $\mathrm{O}(1)$ model and the double random current model which appeared implicitly in the proof of \cite[Thm 3.2]{lis2017planar} for $A=\emptyset, B \neq \emptyset$. For $A=\emptyset, B = \emptyset$  one direction is proven in \cite[Remark 3.4]{DC16} and the other in \cite[Theorem 4.1]{klausen2021monotonicity}. 
Proposition \ref{prop:doublecurrent-loop} with $A \neq \emptyset$ and $B \neq \emptyset$ is new.
The attentive reader with \Cref{fig:couplings_with_sources} in mind could ponder whether the UEG of $\phi[\;\cdot \mid \mathcal{F}_A]$ is the loop $\mathrm{O}(1)$ model and will quickly be convinced that it is not the case. 

\begin{remark}
    A peculiar consequence of Proposition \ref{prop:doublecurrent-loop} is that the uniform even subgraph of $\Prbcur^{\emptyset,B}$ is the loop $\mathrm{O}(1)$ model regardless of $B$. 
 This again implies that the probability that $\Prbcur_{\beta}^{\emptyset,B}$ has a path "wrapping around" the torus an odd number of times is independent of $B$, using the arguments of \cite{hansen2023uniform}, where \eqref{eq:coupling_use} in the proof of  \Cref{thm:q_state_potts_torus_trick} becomes an equality when considering paths of all odd windings instead of just the simple ones. 
Thus, each of the  measures $\Prbcur^{\emptyset,B}$ encodes the phase transition of the Ising model.  More generally, for any non-trivial finite graph, the convex set 
of measures that have $\ell$ as their UEG has more than 2 extremal points (cf. \Cref{lem:non_trivial_convex_combination_of_double_current}). 
This is be contrasted with the operation of adding or deleting Bernoulli edges where there is at most one such measure  (cf. Claim \ref{claim:Bernoulli_uniqueness}). We conjecture that the convex set of measures that have $\ell_{\mathbb{Z}^d, t}$ as their UEG has uncountably many extreme points for any $d\geq 2, t \in (0,1)$. 
\end{remark} 


\section{Coupling measure for conditional Bernoulli percolation}\label{sec:Bernoulli}
Several models appear in the literature which admit interpretations as conditioned versions of Bernoulli percolation. Indeed, the loop O($1$) measure may be viewed as Bernoulli percolation conditioned to output an even graph. Other natural examples include the arboreal gas model \cite{Bauerschmidt2024PercoForest,Bauerschmidt2021PlaneForest,CaraccioloForests} and $H$-free random graphs \cite{BaloghKrFree,GerkeNoK4}, where the conditioning is on outputting, respectively, a forest and a subgraph of the complete graph containing no copies of a fixed graph $H$. For instance, triangle-free graphs can be proven to be bi-partite with high probability \cite{OsthusNoTriangles,PromelNoTriangles}. Less intuitive examples include the uniform spanning tree \cite{KenyonUST,PemantleUST} and self-avoiding walk models \cite{DCSAW,MadrasBook}.
The following coupling works for all such models:
\begin{proposition}\label{prop:partial_coupling}
Let $G=(V,E)$ be a finite graph, $p_e\in(0,1), x_e>0$ for each $e\in E$. Let $\Omega = \{0,1\}^E$, $\Sigma \subset \Omega$ and $f:\Omega\to 2^{\Sigma}$ be defined by $f(\omega)= \Sigma^{\downarrow}(\omega):=\{\eta \in \Sigma\mid \eta \subset \omega\}$ and
	\[\rho[\omega] = \Prb_{p}[\omega], ~\forall \omega\in \Omega, ~\gamma[\eta]= \Prb_{\frac{x}{p+x}}[\eta \mid \Sigma]\propto 
   \displaystyle \prod_{e \in \eta} \frac{x_e}{p_e} , ~\forall \eta\in \Sigma. \]
   	Then, under the coupling measure $\mathscr{P}$ defined in Theorem \ref{thm:generalcoupling}, 
    \begin{enumerate}
        \item[(a)] The marginal of $\mathscr{P}$ on $\Sigma$ is $\mathscr{P}_{\Sigma}[\eta] \propto \prod_{e\in \eta} x_e$. 
        For each $\omega\in\Omega$ with $\mathscr{P}_\Omega[\omega] \neq 0$, the  conditional measure $\mathscr{P}[\;\cdot \mid \omega]
        = \mathbb{P}_{\frac{x}{p+x}}[\;\cdot \mid \Sigma^{\downarrow}(\omega)]$. 
        \item[(b)] The marginal of $\mathscr{P}$ on $\Omega$ is $\mathscr{P}_\Omega= \mathscr{P}_{\Sigma} \cup \mathbb{P}_p$.
        For each $\eta\in\Sigma$ with $\mathscr{P}_{\Sigma}[\eta]\neq 0$, the conditional measure $\mathscr{P}[\;\cdot \mid \eta] = \mathbb{P}_p \cup \delta_{\eta}$, Bernoulli percolation on $E$ with parameter $(p_e)_{e\in E}$ conditioned on all edges in $\eta$ being open.
    \end{enumerate}            
\end{proposition}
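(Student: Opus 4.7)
My plan is to apply Theorem \ref{thm:generalcoupling} directly with the specified $\rho$, $\gamma$ and $f$, and then translate the four generic identities of that theorem into the language of Bernoulli percolation. The key preliminary observation is that the preimage $f^{-1}(\eta)=\{\omega\in\Omega : \eta\subseteq\omega\}$, which by the product form of Bernoulli percolation gives $\rho[f^{-1}(\eta)]=\mathbb{P}_p[\omega\supseteq\eta]=\prod_{e\in\eta}p_e$. I would also record once that $\gamma[\eta]=\mathbb{P}_{x/(p+x)}[\eta\mid\Sigma]\propto\prod_{e\in\eta}(x_e/p_e)$ by an elementary computation from the product form of $\mathbb{P}_{x/(p+x)}$.

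For part (a), combining these two facts and invoking Theorem \ref{thm:generalcoupling}(b) gives $\mathscr{P}_\Sigma[\eta]\propto\gamma[\eta]\,\rho[f^{-1}(\eta)]\propto\prod_{e\in\eta}x_e$, the $p_e$ factors cancelling. The conditional statement $\mathscr{P}[\cdot\mid\omega]=\gamma[\cdot\mid f(\omega)]=\mathbb{P}_{x/(p+x)}[\cdot\mid\Sigma^{\downarrow}(\omega)]$ is then immediate from Theorem \ref{thm:generalcoupling}(c), since $f(\omega)=\Sigma^{\downarrow}(\omega)$ by definition.

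For part (b), I would first compute the conditional measure and then obtain the $\Omega$-marginal by the law of total probability. Theorem \ref{thm:generalcoupling}(d) yields $\mathscr{P}[\cdot\mid\eta]=\rho[\cdot\mid f^{-1}(\eta)]=\mathbb{P}_p[\cdot\mid\omega\supseteq\eta]$; the product structure of $\mathbb{P}_p$ makes this equal to the law of $\eta\cup\omega'$ with $\omega'\sim\mathbb{P}_p$ independent of $\eta$, i.e.\ $\mathscr{P}[\cdot\mid\eta]=\mathbb{P}_p\cup\delta_\eta$. Averaging over $\eta\sim\mathscr{P}_\Sigma$ identifies $\mathscr{P}_\Omega$ with the law of the union of an independent $\mathbb{P}_p$ sample and a $\mathscr{P}_\Sigma$ sample, which is precisely $\mathscr{P}_\Sigma\cup\mathbb{P}_p$.

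There is no genuine obstacle: the proof is purely algebraic substitution into Theorem \ref{thm:generalcoupling}, together with the two structural observations highlighted just before the statement of the proposition. The only step requiring a moment's care is the identification of $\mathbb{P}_p[\,\cdot\mid\omega\supseteq\eta\,]$ with $\mathbb{P}_p\cup\delta_\eta$, which reduces to an edge-by-edge check: each edge of $\eta$ is open almost surely, while each edge outside $\eta$ remains an independent $\mathrm{Bernoulli}(p_e)$, exactly as in the union with $\delta_\eta$.
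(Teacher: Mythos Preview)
Your proof is correct and follows essentially the same route as the paper: both apply Theorem~\ref{thm:generalcoupling} and translate the four conclusions using the product form of Bernoulli percolation. The only minor difference is in how $\mathscr{P}_\Omega$ is identified: the paper computes $\gamma[f(\omega)]\rho[\omega]$ directly and recognises the sum $\sum_{\eta\in\Sigma^{\downarrow}(\omega)}\prod_{e\in\eta}x_e\prod_{e\in\omega\setminus\eta}p_e\prod_{e\in E\setminus\omega}(1-p_e)$ as $(\mathscr{P}_\Sigma\cup\mathbb{P}_p)[\omega]$, whereas you first establish $\mathscr{P}[\cdot\mid\eta]=\mathbb{P}_p\cup\delta_\eta$ and then average over $\eta\sim\mathscr{P}_\Sigma$; this law-of-total-probability route is in fact the one the paper highlights in the paragraph immediately preceding Section~\ref{sec:Ising_sec}.
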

\begin{proof}
The $\Omega$ marginal is by \Cref{thm:generalcoupling}, 
    \begin{align*}
        \mathscr{P}_\Omega[\omega] &\propto \gamma[f(\omega)] \rho[\omega] \propto \sum_{\eta \in \Sigma^{\downarrow}(\omega)}\gamma[\eta] \Prb_p[\omega] \propto \sum_{\eta \in \Sigma^{\downarrow}(\omega)} \prod_{e \in \eta} \frac{x_e}{p_e} \prod_{e\in \omega} p_e \prod_{e\in E \setminus \omega}(1-p_e)\\
        &\propto \sum_{\eta \in \Sigma^{\downarrow}(\omega)} \prod_{e\in \eta} x_e \prod_{e\in \omega \setminus \eta} p_e \prod_{e\in E \setminus \omega}(1-p_e) \propto (\mathscr{P}_{\Sigma} \cup \mathbb{P}_p)[\omega]. 
    \end{align*}  
The remaining part of the proof also follows from \Cref{thm:generalcoupling}. 
\end{proof}
Since we take the case $p_e= x_e \in (0,1)$ for each $e\in E$ to be the most useful, we state it separately.
\begin{corollary}
    \label{prop:conditioned_Bernoulli1}
Let $G=(V,E)$ be a finite graph, $x_e\in (0,1)$ for each $e\in E$. Let $\Omega = \{0,1\}^E$, $\Sigma \subset \Omega$ , $f:\Omega\to 2^{\Sigma}$ be defined by $f(\omega)= \Sigma^{\downarrow}(\omega):=\{\eta \in \Sigma: \eta \subset \omega\}$ as well as
	$$\rho[\omega] \propto \Prb_{x}[\omega], ~\forall \omega\in \Omega, ~\gamma[\eta]\propto 1, ~\forall \eta\in \Sigma. $$
    	Then, under the coupling measure $\mathscr{P}$ defined in Theorem \ref{thm:generalcoupling}, 
    \begin{enumerate}
        \item[(a)] The marginal of $\mathscr{P}$ on $\Sigma$ is $\mathscr{P}_{\Sigma}[\eta]=\mathbb{P}_{\frac{x}{1+x}}[\eta \mid \Sigma] \propto \prod_{e\in \eta} x_e$. 
        For each $\omega\in\Omega$ with $\mathscr{P}_\Omega[\omega] \neq 0$, the  conditional measure $\mathscr{P}[\;\cdot \mid \omega]
        = \mathtt{UG}[\;\cdot \mid \Sigma^{\downarrow}(\omega)]$- the uniform subgraph of $\omega$ which belongs to $\Sigma$.
        \item[(b)]The marginal of $\mathscr{P}$ on $\Omega$ is $\mathscr{P}_\Omega= \mathscr{P}_{\Sigma} \cup \mathbb{P}_x$.
        For each $\eta\in\Sigma$ with $\mathscr{P}_{\Sigma}[\eta]\neq 0$, the conditional measure $\mathscr{P}[\;\cdot \mid \eta] = \mathbb{P}_x \cup \delta_{\eta}$. 
    \end{enumerate}
\end{corollary}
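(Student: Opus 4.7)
The plan is to derive this corollary as a direct specialization of Proposition \ref{prop:partial_coupling} by setting $p_e = x_e$ for every $e \in E$. Under this substitution the two weight ratios appearing in Proposition \ref{prop:partial_coupling} simplify: $\frac{x_e}{p_e + x_e} = \frac{1}{2}$ and $\frac{x_e}{p_e} = 1$. Consequently the $\gamma$ measure of Proposition \ref{prop:partial_coupling} becomes $\gamma[\eta] = \mathbb{P}_{1/2}[\eta \mid \Sigma]$, which coincides with the uniform measure on $\Sigma$, matching the hypothesis $\gamma[\eta]\propto 1$ of the corollary. The map $f$ and the set $\Sigma$ are identical in both statements, so the coupling measure $\mathscr{P}$ in Theorem \ref{thm:generalcoupling} is the same.

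The first step is to verify that the conditional measure $\gamma[\,\cdot\mid f(\omega)]$ delivers the stated form. Since $\gamma$ is uniform on $\Sigma$, its restriction to $f(\omega) = \Sigma^{\downarrow}(\omega)$ is uniform on $\Sigma^{\downarrow}(\omega)$, which by definition is $\mathtt{UG}[\,\cdot\mid \Sigma^{\downarrow}(\omega)]$. The $\Sigma$-marginal $\mathscr{P}_{\Sigma}[\eta] \propto \prod_{e\in\eta} x_e$ is inherited verbatim from Proposition \ref{prop:partial_coupling}(a), as that expression does not depend on the identification $p_e = x_e$. This settles claim (a).

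For claim (b), one transports the identities $\mathscr{P}_{\Omega} = \mathscr{P}_{\Sigma} \cup \mathbb{P}_{p}$ and $\mathscr{P}[\,\cdot\mid \eta] = \mathbb{P}_{p}\cup\delta_{\eta}$ from Proposition \ref{prop:partial_coupling}(b) and substitute $p = x$. No step presents a genuine obstacle; the argument is purely a cosmetic substitution. As an alternative, one could apply Theorem \ref{thm:generalcoupling} directly with $\rho=\mathbb{P}_{x}$ and $\gamma$ uniform on $\Sigma$, and re-derive the $\Omega$-marginal by computing $\gamma[f(\omega)]\rho[\omega] \propto |\Sigma^{\downarrow}(\omega)|\,\prod_{e\in\omega} x_e \prod_{e\notin\omega}(1-x_e)$ and expanding $|\Sigma^{\downarrow}(\omega)| = \sum_{\eta \in \Sigma,\,\eta\subset\omega} 1$ to recognize $(\mathscr{P}_{\Sigma}\cup\mathbb{P}_{x})[\omega]$ after reorganizing the product over edges inside and outside $\eta$; this is exactly the computation performed in the proof of Proposition \ref{prop:partial_coupling}.
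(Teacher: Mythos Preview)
Your proposal is correct and matches the paper's approach exactly: the paper introduces this corollary with the sentence ``Since we take the case $p_e = x_e \in (0,1)$ for each $e\in E$ to be the most useful, we state it separately,'' treating it as an immediate specialization of Proposition~\ref{prop:partial_coupling} with no further proof given. Your verification that $\gamma$ becomes uniform and that the conditional and marginal identities carry over under $p_e = x_e$ is precisely what is needed.
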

Flipping the picture, with \Cref{prop:conditioned_Bernoulli1} in mind, we will refer to \Cref{prop:partial_coupling} as a \textit{partial} coupling for the following reason: In the uniform case, sprinkling $\mathscr{P}_{\Sigma}$ by $\mathbb{P}_{x}$ yields a measure from which $\mathscr{P}_{\Sigma}$ may be recovered by taking a uniform subgraph from $\Sigma$. \Cref{prop:conditioned_Bernoulli1} then says that a similar statement holds if we only sprinkle by a smaller density $\mathbb{P}_{x'}$ with $x'_e<x_e$ for all $e$, corresponding to only adding some of the edges from $\mathbb{P}_x.$

\begin{remark}
    If $\Sigma =\mathcal{E}_{\emptyset}(G)$ is the set of all even subgraphs of $G$, then $\mathscr{P}_{\Sigma}=\ell_x$ is the loop O(1) measure and the marginal $\mathscr{P}_\Omega= \ell_x \cup \mathbb{P}_x$ is the random-cluster measure $\phi_{2x/(1+x)}$. In that case, \Cref{prop:conditioned_Bernoulli1} is the $A=\emptyset$ case of \Cref{prop:randomcluster-loop} discovered in \cite{evertz2002new, grimmett2007random}.
    We speculate that most of the relations proven using \Cref{thm:generalcoupling} admit some partial coupling in the style of \Cref{prop:partial_coupling}. Such relations were explored for the Swendsen-Wang algorithm in \cite{higdon1998auxiliary}. 
\end{remark}     
In a sense, the corresponding algorithm gives a reduction of weighted approximate sampling to uniform sampling. Such reductions are already well known in the computer science literature under the name of alias tables \cite{knuth1997art}. While uniform sampling algorithms are efficient in some of our cases of interest (the uniform even graph or the uniform $d$-regular graph), it is not always the case. In general, the hardness of uniform sampling is reducible to approximate counting \cite{jerrum1986random}.

    The condition $x_e\in (0,1)$ is essential for \Cref{prop:conditioned_Bernoulli1} to carry through in general:

\begin{lemma} \label{lemma:exclusion} Suppose that $\Sigma$ contains two elements $\eta'\subset \eta$ with $\prod_{e\in \eta'} x_e<\prod_{e\in \eta} x_e$. Then, there exists \textit{no} measure $\mathscr{P}$ on $\Omega\times \Sigma$ such that the marginal $\mathscr{P}_{\Sigma}[\eta] \propto \prod_{e\in \eta} x_e$ and such that  the conditional measure $\mathscr{P}[\;\cdot \mid \omega]$ is uniform on $\Sigma^{\downarrow}(\omega)$.
\end{lemma}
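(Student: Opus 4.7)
The plan is to exploit a simple monotonicity forced by uniform conditional sampling: if $\eta'\subset\eta$, then every bond configuration $\omega$ that "sees" $\eta$ (i.e.\ satisfies $\eta\in\Sigma^{\downarrow}(\omega)$, equivalently $\omega\supset\eta$) also sees $\eta'$, and it assigns each the same weight $1/|\Sigma^{\downarrow}(\omega)|$ under the uniform conditional. Consequently, the $\Sigma$-marginal must satisfy $\mathscr{P}_{\Sigma}[\eta']\geq\mathscr{P}_{\Sigma}[\eta]$, which directly contradicts the required identity $\mathscr{P}_{\Sigma}[\eta]\propto\prod_{e\in\eta}x_e$ under the strict inequality hypothesis.

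Concretely, I would proceed as follows. First, observe that since $\eta'\subset\eta$, the containment $\{\omega\in\Omega:\omega\supset\eta\}\subset\{\omega\in\Omega:\omega\supset\eta'\}$ holds, and that $\eta\in\Sigma^{\downarrow}(\omega)$ if and only if $\omega\supset\eta$. Second, decompose the marginal using the hypothesized conditional law:
\[
\mathscr{P}_{\Sigma}[\eta]=\sum_{\omega\in\Omega}\mathscr{P}_{\Omega}[\omega]\,\mathscr{P}[\eta\mid\omega]=\sum_{\omega\supset\eta}\frac{\mathscr{P}_{\Omega}[\omega]}{|\Sigma^{\downarrow}(\omega)|},
\]
and analogously
\[
\mathscr{P}_{\Sigma}[\eta']=\sum_{\omega\supset\eta'}\frac{\mathscr{P}_{\Omega}[\omega]}{|\Sigma^{\downarrow}(\omega)|}\geq\sum_{\omega\supset\eta}\frac{\mathscr{P}_{\Omega}[\omega]}{|\Sigma^{\downarrow}(\omega)|}=\mathscr{P}_{\Sigma}[\eta],
\]
where the inequality is a term-by-term comparison using the inclusion of summation ranges and the non-negativity of $\mathscr{P}_{\Omega}[\omega]/|\Sigma^{\downarrow}(\omega)|$. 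Finally, if $\mathscr{P}_{\Sigma}[\eta]\propto\prod_{e\in\eta}x_e$, the hypothesis $\prod_{e\in\eta'}x_e<\prod_{e\in\eta}x_e$ yields $\mathscr{P}_{\Sigma}[\eta']<\mathscr{P}_{\Sigma}[\eta]$, a contradiction.

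There is essentially no obstacle: the argument is a one-line stochastic domination observation, and the only subtlety is the bookkeeping that "uniform on $\Sigma^{\downarrow}(\omega)$" assigns the \emph{same} weight $1/|\Sigma^{\downarrow}(\omega)|$ to $\eta$ and $\eta'$ whenever both lie in $\Sigma^{\downarrow}(\omega)$, which is precisely where the monotonicity $\prod x_e$ of the desired marginal law breaks down. This also pinpoints why the restriction $x_e\in(0,1)$ in \Cref{prop:conditioned_Bernoulli1} is not cosmetic: for $x_e\geq 1$ one can violate the monotonicity condition $\prod_{e\in\eta'}x_e\geq\prod_{e\in\eta}x_e$ whenever $\eta'\subset\eta$ lie in $\Sigma$, ruling out any coupling of the desired form.
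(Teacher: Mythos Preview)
Your proof is correct and takes essentially the same approach as the paper: both decompose the $\Sigma$-marginal via the conditional law, use the inclusion $\{\omega\supset\eta\}\subset\{\omega\supset\eta'\}$ together with uniformity of $\mathscr{P}[\cdot\mid\omega]$ to deduce $\mathscr{P}_{\Sigma}[\eta']\geq\mathscr{P}_{\Sigma}[\eta]$, and derive a contradiction with the assumed product form of the marginal. The paper phrases this as a ratio bounded by $1$ (contraposition), while you write the term-by-term inequality directly, but the argument is identical.
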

\begin{proof}
We prove the statement by contraposition. Suppose that such a measure $\mathscr{P}$ exists. Then, 
\[\frac{\mathscr{P}_{\Sigma}[\eta]}{\mathscr{P}_{\Sigma}[\eta']}=\frac{\sum_{\omega}\mathscr{P}[\eta \mid \omega] \mathscr{P}_{\Omega}[\omega]}{\sum_{\omega'}\mathscr{P}[\eta' \mid \omega'] \mathscr{P}_{\Omega}[\omega']}=\frac{\sum_{\omega: \eta\subset \omega} \mathscr{P}_{\Omega}[\omega]/\abs{\Sigma^{\downarrow}(\omega)}}{\sum_{\omega': \eta'\subset \omega'} \mathscr{P}_{\Omega}[\omega']/\abs{\Sigma^{\downarrow}(\omega')}} \leq 1,\]
where we used that $\eta\subset \omega$ implies $\eta'\subset \omega$. We conclude that $\prod_{e\in \eta} x_e\leq \prod_{e\in \eta'} x_e$ for all pairs $\eta\subset \eta'$ in $\Sigma$. 
\end{proof}

One particularly natural instance where the assumption of \Cref{lemma:exclusion} applies is when $0\in\Sigma$, $|\Sigma|\geq 2$ and $x_e>1$ for all $e\in E.$ For example, this implies that a generalization of the coupling from \cite{grimmett2007random}
of the loop O($1$) model to the regime $x>1$ requires that one exchange the UEG for something else. Fortunately, 
if one applies the map $(\eta,\omega)\mapsto (E \setminus \eta,E \setminus \omega)$ to the coupling from \Cref{prop:partial_coupling}, one gets, by push-forward:

\begin{proposition}\label{prop:conditioned_Bernoulli2}
Let $G=(V,E)$ be a finite graph, $p_e\in(0,1), x_e>0$ for each $e\in E$. Let $\Omega = \{0,1\}^E$, $\Sigma \subset \Omega$ and $f(\omega)= \Sigma^{\uparrow}(\omega):=\{\eta \in \Sigma \mid \eta \supset \omega\}$, and 
	$$\rho[\omega] = \Prb_{p}[\omega], ~\forall \omega\in \Omega, ~\gamma[\eta] = \Prb_{\frac{x(1-p)}{1+x(1-p)}}[\eta \mid \Sigma]\propto \prod_{e\in \eta} x_e(1-p_e) , ~\forall \eta\in \Sigma.$$
    	Then, under the coupling measure $\mathscr{P}$ defined in Theorem \ref{thm:generalcoupling}, 
    \begin{enumerate}
        \item[(a)]  The marginal of $\mathscr{P}$ on $\Sigma$ is $\mathscr{P}_{\Sigma}[\eta]\propto \prod_{e\in \eta} x_e$. For each $\omega\in\Omega$ with $\mathscr{P}_\Omega[\omega] \neq 0$, the  conditional measure $\mathscr{P}[\;\cdot \mid \omega]=\Prb_{\frac{x(1-p)}{1+x(1-p)}}[\; \cdot  \mid \Sigma^{\uparrow}(\omega)]$.
        \item[(b)] The marginal of $\mathscr{P}$ on $\Omega$ is $\mathscr{P}_{\Omega} = \mathscr{P}_{\Sigma} \cap \mathbb{P}_{p}$.  
        For each $\eta\in\Sigma$ with $\mathscr{P}_{\Sigma}[\eta] \neq 0$, the conditional measure $\mathscr{P}[\;\cdot \mid \eta] = \Prb_p \cap \delta_{\eta} = \Prb_{\eta,p}. $ 
    \end{enumerate}
\end{proposition}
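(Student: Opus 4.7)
The plan is to deduce Proposition \ref{prop:conditioned_Bernoulli2} from Proposition \ref{prop:partial_coupling} via the complementation map, as announced just before its statement. Set $\Sigma^c := \{E\setminus\eta : \eta\in\Sigma\}\subset\Omega$ and consider the bijection $\Phi:\Omega\times\Sigma\to\Omega\times\Sigma^c$ given by $\Phi(\omega,\eta)=(E\setminus\omega,E\setminus\eta)$. The key observation is that $\eta\supset\omega$ if and only if $E\setminus\eta\subset E\setminus\omega$, so $\Phi$ intertwines the upward map $f(\omega)=\Sigma^{\uparrow}(\omega)$ here with the downward map $\tilde f(\tilde\omega)=(\Sigma^c)^{\downarrow}(\tilde\omega)$ of Proposition \ref{prop:partial_coupling}.

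Now I would apply Proposition \ref{prop:partial_coupling} to $\Omega\times\Sigma^c$ with the substituted parameters $\tilde p_e := 1-p_e\in(0,1)$ and $\tilde x_e := 1/x_e>0$, producing a coupling $\tilde{\mathscr{P}}$ whose properties translate back via $\Phi^{-1}$. Under complementation, $\Bernoulli_{\tilde p}=\Bernoulli_{1-p}$ becomes $\Bernoulli_p$; the union of edge sets becomes their intersection via $(A\cup B)^c=A^c\cap B^c$; and the constraint $\delta_{\tilde\eta}$ (all edges of $\tilde\eta$ open) turns into $\delta_\eta\cap\Bernoulli_p$ (all edges of $E\setminus\eta$ closed). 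It remains to verify that the parameter $\tilde x_e=1/x_e$ is the right choice. For the $\Sigma^c$-marginal,
\[
\tilde{\mathscr{P}}_{\Sigma^c}[\tilde\eta]\propto\prod_{e\in\tilde\eta}\tilde x_e=\prod_{e\notin\eta}\tfrac{1}{x_e}\propto\prod_{e\in\eta}x_e,
\]
recovering the claimed $\mathscr{P}_\Sigma$. Similarly, $\tilde\gamma[\tilde\eta]\propto\prod_{e\in\tilde\eta}\tilde x_e/\tilde p_e=\prod_{e\notin\eta}\frac{1}{x_e(1-p_e)}\propto\prod_{e\in\eta}x_e(1-p_e)$, which agrees with $\Bernoulli_{x(1-p)/(1+x(1-p))}[\cdot\mid\Sigma]$.

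With these parameter identifications, the four conclusions of Proposition \ref{prop:partial_coupling} push forward to those of Proposition \ref{prop:conditioned_Bernoulli2}: the marginal identity $\tilde{\mathscr{P}}_\Omega=\tilde{\mathscr{P}}_{\Sigma^c}\cup\Bernoulli_{\tilde p}$ becomes $\mathscr{P}_\Omega=\mathscr{P}_\Sigma\cap\Bernoulli_p$; the conditional $\tilde{\mathscr{P}}[\cdot\mid\tilde\eta]=\Bernoulli_{\tilde p}\cup\delta_{\tilde\eta}$ becomes $\mathscr{P}[\cdot\mid\eta]=\Bernoulli_p\cap\delta_\eta=\Bernoulli_{\eta,p}$; and $\tilde{\mathscr{P}}[\cdot\mid\tilde\omega]=\Bernoulli_{\tilde x/(\tilde p+\tilde x)}[\cdot\mid(\Sigma^c)^{\downarrow}(\tilde\omega)]$ becomes the asserted conditional on $\Sigma^{\uparrow}(\omega)$ upon noting that a Bernoulli edge $e$ open with probability $\tilde x_e/(\tilde p_e+\tilde x_e)$ has complementary probability of being closed equal to $\tilde p_e/(\tilde p_e+\tilde x_e) = x_e(1-p_e)/(1+x_e(1-p_e))$.

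The main obstacle, such as it is, lies solely in bookkeeping these parameter transformations and normalization constants; all cancellations are straightforward. As an alternative fully self-contained approach one could apply Theorem \ref{thm:generalcoupling} directly: part (c) immediately yields $\mathscr{P}[\cdot\mid\omega]=\gamma[\cdot\mid\Sigma^{\uparrow}(\omega)]$, part (d) together with $f^{-1}(\eta)=\{\omega\subset\eta\}$ gives $\mathscr{P}[\cdot\mid\eta]=\Bernoulli_p[\cdot\mid\omega\subset\eta]=\Bernoulli_{\eta,p}$, and the marginals follow from the short computations $\rho[f^{-1}(\eta)]=\prod_{e\notin\eta}(1-p_e)$ and a direct identification of $\rho[\omega]\gamma[f(\omega)]$ with $(\mathscr{P}_\Sigma\cap\Bernoulli_p)[\omega]$ after collecting terms.
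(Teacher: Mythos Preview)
Your proposal is correct. The paper actually presents both routes you describe, but with the emphasis reversed: it announces the complementation push-forward from Proposition~\ref{prop:partial_coupling} as the conceptual origin of the statement, and then writes out as its formal proof the direct computation from Theorem~\ref{thm:generalcoupling} that you sketch at the end as an alternative. Either route is complete on its own, and your bookkeeping for the parameter transformations under complementation is accurate.
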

\begin{proof}
    By Theorem \ref{thm:generalcoupling}, we have
    \begin{align*}
        \mathscr{P}_{\Sigma}[\eta] &\propto \rho[f^{-1}(\eta)] \gamma[\eta] \propto \Prb_{p}[\text{each edge in } E\setminus \eta \text{ is closed}] \prod_{e\in \eta} x_e(1-p_e)\\
        &\propto \prod_{e\in E \setminus \eta}(1-p_e) \prod_{e\in \eta} x_e(1-p_e) \propto \prod_{e\in \eta} x_e,
    \end{align*}
    \begin{align*}
        \mathscr{P}_{\Omega}[\omega] &\propto \gamma[f(\omega)] \rho[\omega] \propto \sum_{\eta \in \Sigma^{\uparrow}(\omega)} \gamma[\eta] \Prb_p[\omega] \propto \sum_{\eta \in \Sigma^{\uparrow}(\omega)} \prod_{e \in \eta} x_e (1-p_e) \prod_{e\in \omega} p_e \prod_{e \in E \setminus \omega}(1-p_e)\\
        &\propto \sum_{\eta \in \Sigma^{\uparrow}(\omega)} \prod_{e\in \eta} x_e \prod_{e\in \omega} p_e \prod_{e\in \eta\setminus \omega} (1-p_e) \propto (\mathscr{P}_{\Sigma} \cap \mathbb{P}_{p})[\omega].
    \end{align*}
    The computations of conditional measures are straightforward.
\end{proof}
Whenever, $x_e> 1, p_e = 1- \frac{1}{x_e}$ for each $e \in E$, $\gamma$ becomes the uniform measure and the coupling simplifies analogously to \Cref{prop:conditioned_Bernoulli1}. 
In this case, for the  conditional measure $\mathscr{P}[\;\cdot \mid \omega]$, instead of taking uniform subgraphs belonging to $\Sigma$, one takes uniform supergraphs belonging to $\Sigma$. 

In \Cref{sec:coupling_examples} some examples are given of how one may extract information about conditional Bernoulli percolation using the results of this section.


\section{The loop representation of the Potts model} \label{sec:Potts}

The loop O($1$) model is but one of many models of random geometric objects satisfying something like a divergence-free constraint. In the following, we show that the couplings of \cite{grimmett2007random,zhang2020loop} generalize to a loop representation of the lattice gauge Potts models in full generality. To ease ourselves into an appropriate mind-set (and to prove that the   $q$-flow (loop) representation has exponential decay if and only if the Potts model has in \Cref{thm:q_state_potts_torus_trick}), we start by treating the edge models before we delve into the lattice gauge theories.

In the case of the loop O($1$) model, we may identify $\{0,1\}$ with the group
$\mathbb{Z}/2\mathbb{Z}$. For a finite graph $G=(V,E),$ there is a boundary map $\partial:\{0,1\}^E\to \{0,1\}^V$ given by linearly extending $\id_{vw}\mapsto \id_w+\id_v.$ The loop O($1$) model is then Bernoulli percolation conditioned to output an element in the kernel of $\partial$.

The loop-cluster representation from \cite{zhang2020loop} may be defined similarly:
For any finite graph $G = (V,E)$ and natural number $q\geq 2$, let $\mathcal{O}(E)$ denote the set of oriented edges - i.e. the set of ordered pairs $(v,w)$ with $v$ and $w$ adjacent in $G$. In general, for $\mathfrak{e}=(v,w)\in \mathcal{O}(E),$ we write $-\mathfrak{e}=(w,v)$. Let  $\Omega^q(E)\subset (\mathbb{Z}/q\mathbb{Z})^{\mathcal{O}(E)}$ denote the space of $1$-chains, that is:
$$
\Omega^q(E)=\{ \sigma \in (\mathbb{Z}/q\mathbb{Z})^{\mathcal{O}(E)}\mid \forall \mathfrak{e}\in \mathcal{O}(E):\sigma_{-\mathfrak{e}}=-\sigma_\mathfrak{e}\}
$$ Thus, for any fixed orientation $\overrightarrow{E}$ of the edges, $\sigma\in \Omega^q(E)$ is completely determined by $\sigma|_{\overrightarrow{E}}$. 
In particular, the dimension\footnote{Or rank, rather, in the case where $q$ is not prime.} of $\Omega^q(E)$ is $\abs{E}$. 
When defining chains in this section, we will do so in a fixed orientation as in \cite{zhang2020loop}.
For instance, for $\mathfrak{e}\in \mathcal{O}(E),$ we will let $\mathbf{1}^{\mathfrak{e}}$ denote the $1$-chain satisfying $\mathbf{1}^{\mathfrak{e}}_{\mathfrak{e}}=1,$ $\mathbf{1}^{\mathfrak{e}}_{-\mathfrak{e}}=-1$ and $\mathbf{1}^\mathfrak{e}_{\mathfrak{e}'}=0$ for $\mathfrak{e}'\in \mathcal{O}(E)\setminus \{\mathfrak{e},-\mathfrak{e}\}$. In different situations, it is convenient to choose our orientations according to context, and thus, the given definition allows for flexibility. 
In the sequel, we will be considering representations of lattice gauge theories, where working without a fixed orientation is even more helpful.

Just like before, there is a boundary map $\partial: \Omega^q(E)\to (\mathbb{Z}/q\mathbb{Z})^V$ given by a linear extension\footnote{Again, in the case $q=2,$ this is consistent with the definition given before.} of $\mathbf{1}^{(v,w)}\mapsto \id_w -\id_v$.

Then, for positive real numbers $x_e$ indexed by $e \in E$, the $q$-flow measure is defined on $\Sigma=\ker(\partial)$ through \begin{align}\label{eq:q_flow_def}\ell_{G,x}^q[\eta] \propto \prod_{e\in \text{supp}(\eta)} x_e,
\end{align} where $\text{supp}(\eta)=\{vw\in E\mid \eta_{(v,w)}\neq 0\}$, i.e.  parallel oriented edges which are both non-zero are not double-counted. 

In the following, we let $\Omega=\{0,1\}^E,$ and $\ker(\partial^\omega)=\{\eta \in \Sigma\mid \text{supp}(\eta)\subset \omega\}$ be the kernel of the boundary map restricted to $\omega\in \Omega.$ To prepare ourselves to prove that the $q$-flow measure bears the same relation to the random-cluster model as the loop O($1$) model, we need an analogue of the formula for the cyclomatic number \eqref{eq:even_formula} for general chains. The following was  observed in \cite{zhang2020loop}, but we include a proof for completeness. The proof is completely analogous to the one given for $q$ prime in \cite{kavitha2009cycle} and is also given for $q=2$ in \cite[Proposition 2.3]{grimmett2007random}:
\begin{lemma}[\cite{zhang2020loop}] \label{lemma:Counting_divergence}
For any $\omega\in \Omega,$ 
$
\abs {\ker(\partial^\omega)}=q^{|\omega|+\kappa(\omega)-|V|}.
$
\end{lemma}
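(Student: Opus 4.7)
The plan is to adapt the classical cyclomatic-number argument for the cycle space of a graph to coefficients in $\mathbb{Z}/q\mathbb{Z}$. I would fix an arbitrary spanning forest $T \subset \omega$ of the subgraph $(V,\omega)$; by standard graph theory, $|T| = |V| - \kappa(\omega)$, so the complement has size $|\omega \setminus T| = |\omega| + \kappa(\omega) - |V|$, which is exactly the exponent appearing in the claimed formula.

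The main step is to show that the restriction map $\Phi: \ker(\partial^\omega) \to (\mathbb{Z}/q\mathbb{Z})^{\omega \setminus T}$, which sends an element $\eta$ to its values on the edges of $\omega \setminus T$ (under any fixed orientation of those edges), is a bijection. Once this is established, the lemma follows at once by counting: $|\ker(\partial^\omega)| = q^{|\omega \setminus T|} = q^{|\omega| + \kappa(\omega) - |V|}$.

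Injectivity reduces to the statement that a $1$-chain supported inside a forest with vanishing boundary must be identically zero, which I would prove by leaf pruning: any leaf vertex of $T$ is incident to a unique tree edge whose value is forced to be $0$ by the boundary condition at that leaf; removing the leaf and iterating exhausts the forest. For surjectivity, given $\eta_0 \in (\mathbb{Z}/q\mathbb{Z})^{\omega \setminus T}$, view it as a chain supported on $\omega \setminus T$ and seek a chain $\eta_T$ supported on $T$ with $\partial \eta_T = -\partial \eta_0$; then $\eta_0 + \eta_T$ lies in $\ker(\partial^\omega)$ and restricts to $\eta_0$ on $\omega\setminus T$.

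The main obstacle is verifying that such $\eta_T$ exists, and this is where the sum-zero consistency condition enters: for each connected component $C$ of $(V,\omega)$, $\sum_{v \in C}(\partial \eta_0)_v = 0$ in $\mathbb{Z}/q\mathbb{Z}$, because every elementary chain $\mathbf{1}^{(v,w)}$ contributes $+1$ and $-1$ to its two (necessarily co-componental) endpoints and thus $0$ to the component sum. Given this, a reverse leaf-pruning procedure applied to the subtree of $T$ spanning each component $C$ produces the unique $\eta_T$ with the required boundary on $C$: peel off a leaf, use the boundary condition at that leaf to force the value of its unique tree edge, and iterate; consistency at the final vertex is guaranteed by the sum-zero condition. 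This completes the bijection and the proof.
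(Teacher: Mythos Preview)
Your proof is correct and follows essentially the same spanning-forest approach as the paper: both fix a spanning forest, use leaf pruning for injectivity, and establish surjectivity of the restriction to non-tree edges. The only cosmetic difference is that the paper proves surjectivity by explicitly summing fundamental cycles $\eta^{k,\mathfrak{e}}$ over the non-tree edges, whereas you solve the boundary equation $\partial\eta_T=-\partial\eta_0$ on the tree by reverse leaf pruning; these are two standard phrasings of the same construction.
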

\begin{remark} \label{rem:nothing_special_edge}
Nothing is special about $\mathbb{Z}/q\mathbb{Z}$. For any finite, abelian group $\mathbb{G},$ $\abs {\ker(\partial^\omega)}=|\mathbb{G}|^{|\omega|+\kappa(\omega)-|V|}$.
\end{remark}
\begin{proof}
Let $F\subset \omega$ be a spanning forest of $\omega$ (i.e. the edges form a spanning tree for every component) and let $\eta,\eta'\in \ker(\partial^\omega)$ with $\eta_{\mathfrak{e}}=\eta_{\mathfrak{e}'}$ for all $\mathfrak{e}\in \mathcal{O}(\omega\setminus F).$ Then, $\eta-\eta'\in \Sigma^{\downarrow}(F)=\{0\},$ which may be checked inductively, since the constraint $\partial \eta=0$ forces the value at any edge adjacent to a leaf to be $0$. This gives $|\ker(\partial^\omega)|\leq q^{|\omega\setminus F|}$.

Conversely, for every $k\in \mathbb{Z}/q\mathbb{Z}$ and $e=vw\in \omega\setminus F,$ there is an oriented simple loop $\Gamma$ in $\omega$ given by following $\mathfrak{e}=(v,w)$ and then taking the unique oriented path in $\mathcal{O}(F)$ from $w$ to $v$. Setting $\eta^{k,\mathfrak{e}}|_{\Gamma}=k$ and $0$ on edges outside of $\Gamma$ and $-\Gamma$ yields an element $\eta^{k,\mathfrak{e}}\in \ker(\partial^\omega)$ with $\eta^{k,\mathfrak{e}}_{\mathfrak{e}}=k$ and $\eta^{k,\mathfrak{e}}_{\mathfrak{e}'}=0$ for all $\mathfrak{e}'\in \mathcal{O}(\omega\setminus (F\cup \{e\}))$. Adding together such paths, we see that $|\ker(\partial^\omega)|\geq q^{|\omega\setminus F|},$ and we conclude that $|\ker(\partial^\omega)|= q^{|\omega\setminus F|}=q^{|\omega|+\kappa(\omega)-|V|}$.
\end{proof}

The lemma combined with \Cref{thm:generalcoupling} proves the coupling from \cite{zhang2020loop}, which generalized \cite{evertz2002new, grimmett2007random}.

\begin{proposition}[\cite{zhang2020loop}]\label{prop:uniform flows} Let $G=(V,E)$ be a finite graph, $q\geq 2$ be an integer, $x_e\in (0,1)$ for each $e\in E,$ $\Omega=\{0,1\}^E,$ $\Sigma=\ker(\partial)$ and $f:\Omega\to 2^{\Sigma}$ be defined by $f(\omega)=\ker(\partial^\omega)$ and 
$$
\rho[\omega]\propto \mathbb{P}_x[\omega],\quad\forall \omega\in \Omega,\qquad \gamma[\eta]\propto 1,\quad\forall \eta\in \Sigma.
$$
	Then, under the coupling measure $\mathscr{P}$ defined in Theorem \ref{thm:generalcoupling}, 
	\begin{enumerate}
		\item[(a)] The marginal of $\mathscr{P}$ on $\Sigma$ is $\ell^q_{G,x}$.
        For each $\omega\in\Omega$, the conditional measure $\mathscr{P}[\;\cdot \mid \omega]$ on $\Sigma$ is the uniform  measure on $\ker(\partial^\omega)$, i.e. a uniform divergence free $q$-coloring of $\omega$.
		\item[(b)] The marginal of $\mathscr{P}$ on $\Omega$ is the random-cluster model with cluster-weight $q$ and edge-weights $(p_e)_{e\in E}$  satisfying $\frac{x_e}{1-x_e}=\frac{p_e}{q(1-p_e)},$ 
        \[
        \phi^q_{G,p}[\omega]\propto \left(\prod_{e\in \omega} \frac{p_e}{1-p_e}\right)q^{\kappa(\omega)}.
        \]
        For each $\eta\in\Sigma$, the 
        conditional measure $\mathscr{P}[\;\cdot \mid \eta] = \Bernoulli_{x} \cup \delta_{\eta}$ on $\Omega$ is Bernoulli percolation on $E$ with parameter $(x_e)_{e\in E}$ conditioned on all edges in $\eta$ being open.
	\end{enumerate}
\end{proposition}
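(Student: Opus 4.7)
The plan is to obtain Proposition~\ref{prop:uniform flows} as an immediate application of Theorem~\ref{thm:generalcoupling}, with the only nontrivial input being the counting formula from Lemma~\ref{lemma:Counting_divergence}. The four assertions (two marginals, two conditionals) are each directly readable from parts (a)--(d) of Theorem~\ref{thm:generalcoupling}, so the task reduces to identifying the relevant quantities in the present setup.

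First I would handle the $\Sigma$-side. Note that $\eta\in f(\omega)$ iff $\mathrm{supp}(\eta)\subset \omega,$ so $f^{-1}(\eta)=\{\omega\in\Omega\mid \mathrm{supp}(\eta)\subset \omega\}.$ Therefore
\[
\rho[f^{-1}(\eta)]=\mathbb{P}_x[\text{every edge of }\mathrm{supp}(\eta)\text{ is open}]=\prod_{e\in\mathrm{supp}(\eta)} x_e\cdot \prod_{e\in E\setminus \mathrm{supp}(\eta)}\bigl(x_e+(1-x_e)\bigr),
\]
which up to a multiplicative constant independent of $\eta$ equals $\prod_{e\in \mathrm{supp}(\eta)} x_e$. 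Combined with $\gamma[\eta]\propto 1$, Theorem~\ref{thm:generalcoupling}(b) gives $\mathscr{P}_{\Sigma}[\eta]\propto \prod_{e\in \mathrm{supp}(\eta)} x_e = \ell^q_{G,x}[\eta].$ The conditional statement $\mathscr{P}[\cdot\mid \omega]=\gamma[\cdot \mid f(\omega)]$ from Theorem~\ref{thm:generalcoupling}(c) is then the uniform measure on $\ker(\partial^\omega)$ since $\gamma$ is uniform on $\Sigma$ and $f(\omega)=\ker(\partial^\omega)\subset \Sigma$.

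Next I would treat the $\Omega$-side, where Lemma~\ref{lemma:Counting_divergence} enters. By Theorem~\ref{thm:generalcoupling}(a),
\[
\mathscr{P}_{\Omega}[\omega]\propto \gamma[f(\omega)]\,\rho[\omega]\propto |\ker(\partial^\omega)|\,\mathbb{P}_x[\omega]= q^{|\omega|+\kappa(\omega)-|V|}\prod_{e\in\omega}x_e \prod_{e\in E\setminus\omega}(1-x_e).
\]
Pulling out the factor $\prod_{e\in E}(1-x_e)$ and the $\omega$-independent $q^{-|V|}$, this is proportional to $q^{\kappa(\omega)}\prod_{e\in\omega}\frac{q x_e}{1-x_e}.$ Setting $\frac{p_e}{1-p_e}=\frac{qx_e}{1-x_e},$ equivalently $\frac{x_e}{1-x_e}=\frac{p_e}{q(1-p_e)},$ identifies this with $\phi^q_{G,p}[\omega]$. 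For the conditional, Theorem~\ref{thm:generalcoupling}(d) gives $\mathscr{P}[\cdot\mid \eta]=\rho[\cdot\mid f^{-1}(\eta)]=\mathbb{P}_x[\cdot\mid\text{every edge of }\mathrm{supp}(\eta)\text{ is open}],$ which is $\mathbb{P}_x\cup \delta_{\eta}$ as claimed.

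There is essentially no obstacle here beyond bookkeeping: the one place where a genuine combinatorial identity is needed is the evaluation of $|\ker(\partial^\omega)|$, which is precisely what Lemma~\ref{lemma:Counting_divergence} supplies, and the only arithmetic step is the change of variables $x\leftrightarrow p$ absorbing the $q^{|\omega|}$ factor into the random-cluster edge weights. Thus the proof reduces to writing out the above two displays and citing Theorem~\ref{thm:generalcoupling} and Lemma~\ref{lemma:Counting_divergence}.
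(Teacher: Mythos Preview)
Your proposal is correct and follows essentially the same approach as the paper: both reduce everything to Theorem~\ref{thm:generalcoupling} and identify the only nontrivial computation as the $\Omega$-marginal, where Lemma~\ref{lemma:Counting_divergence} supplies $|\ker(\partial^\omega)|=q^{|\omega|+\kappa(\omega)-|V|}$ and the $q^{|\omega|}$ factor is absorbed into the edge weights via the change of variables $\frac{x_e}{1-x_e}=\frac{p_e}{q(1-p_e)}$. You spell out the $\Sigma$-side and the conditionals in slightly more detail than the paper (which dismisses them as immediate), but the argument is the same.
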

\begin{proof}
The only part which is not an immediate consequence of \Cref{thm:generalcoupling} is that the marginal on $\Omega$ is the random-cluster model. We calculate using \Cref{lemma:Counting_divergence}:
\begin{align}\label{eq:proof}
\prod_{e\in \omega} \left(\frac{p_e}{1-p_e}\right)q^{\kappa(\omega)}
&\propto \prod_{e\in \omega} \left(\frac{x_e}{1-x_e} \frac{p_e(1-x_e)}{x_e(1-p_e)} \right)q^{\kappa(\omega)} \\
&=\prod_{e\in \omega} \left(\frac{x_e}{1-x_e}\right)q^{\abs{\omega}+\kappa(\omega)}\propto \prod_{e\in \omega} \left(\frac{x_e}{1-x_e}\right) \abs{\ker(\partial^\omega)},
\end{align}
as claimed.
\end{proof}

For the rest of the section, we consider the case where $p_e\equiv p.$
Let $\ell^q_{G,x}[v\cc B]$ denote the probability that the vertex $v$ is connected to the set $B$ by a path of edges in the $\textrm{supp}(\eta)$. 
Let $B_n = [-n,n]^d \cap \Z^d$ be the box of size $n$ in dimension $d$.  
Define, furthermore, $x_c=x_c(d,q)=\frac{p_c(d,q)}{p_c(d,q)+q(1-p_c(d,q))},$ where $p_c(d,q)$ is the transition point of the $q$-state random-cluster model in $d$ dimensions, corresponding to the critical temperature of the Potts model.

\begin{corollary} \label{cor:immediate_exp_decay}
Let $d\in \mathbb{N}$ and $x<x_c.$ 
Then, there exists $C>0$ such that for any sequence of finite graphs $(G_n)_{n\in \mathbb{N}}$ extending\footnote{Meaning that the vertex boundary of $B_{n-1}$ inside $G_n$ coincides with its vertex boundary inside $\mathbb{Z}^d$.} $B_{n-1}\subset \mathbb{Z}^d,$ $\ell^q_{G_n,x}[0\cc \partial B_{n-1}]\leq \exp(-Cn)$. 
\end{corollary}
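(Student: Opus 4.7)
The plan is to transfer the connection event from the $q$-flow model to the random-cluster model via the coupling of \Cref{prop:uniform flows}, and then to invoke sharpness of the random-cluster phase transition. First, I would note that the relation $\tfrac{x}{1-x}=\tfrac{p}{q(1-p)}$ is a strictly increasing bijection between $x\in(0,1)$ and $p\in(0,1)$, so the hypothesis $x<x_c$ is equivalent to $p<p_c(d,q)$, the critical point of the $q$-state random-cluster model on $\mathbb{Z}^d$. Next, I would sample $(\omega,\eta)\sim\mathscr{P}$ from the coupling, so that $\omega\sim \phi^q_{G_n,p}$, $\eta\sim \ell^q_{G_n,x}$, and $\mathrm{supp}(\eta)\subset \omega$. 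Since any path realizing $\{0\cc \partial B_{n-1}\}$ for $\eta$ uses edges in $\mathrm{supp}(\eta)\subset \omega$, this immediately yields
\[
\ell^q_{G_n,x}[0\cc \partial B_{n-1}] \leq \phi^q_{G_n,p}[0\cc \partial B_{n-1}].
\]

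To handle the generality of the extensions $G_n$, I would use that $0\in B_{n-1}$ and, by the assumption on $G_n$, that $\partial B_{n-1}$ is the inner vertex boundary of $B_{n-1}$ both inside $\mathbb{Z}^d$ and inside $G_n$. Truncating any $\omega$-open path from $0$ to $\partial B_{n-1}$ at its first hitting of $\partial B_{n-1}$ yields an open path using only edges of $E(B_{n-1})$, so the event $\{0\cc \partial B_{n-1}\}$ is measurable with respect to the restriction $\omega\cap E(B_{n-1})$. By the domain Markov property and monotonicity in boundary conditions (which hold for the random-cluster model with $q\geq 1$), the marginal of $\phi^q_{G_n,p}$ on $E(B_{n-1})$ is stochastically dominated by the wired $q$-state random-cluster measure on $B_{n-1}$. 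As $\{0\cc \partial B_{n-1}\}$ is increasing, this reduces the estimate to one for a measure that no longer depends on $G_n$.

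Finally, I would invoke the sharpness of the random-cluster phase transition of Duminil-Copin--Raoufi--Tassion (valid for all $q\geq 1$), which supplies a constant $C=C(d,q,p)>0$ such that the wired $q$-state random-cluster measure on $B_{n-1}$ assigns probability at most $\exp(-Cn)$ to $\{0\cc \partial B_{n-1}\}$ whenever $p<p_c(d,q)$. Chaining the three inequalities gives the claimed bound. The main obstacle I anticipate is precisely the uniformity in $G_n$, since $G_n$ may differ arbitrarily outside $B_{n-1}$; but this is handled cleanly by the truncation observation combined with monotonicity in boundary conditions, which together erase all dependence on the structure of $G_n$ away from $B_{n-1}$.
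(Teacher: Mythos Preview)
Your proof is correct and follows essentially the same approach as the paper: couple $\eta\sim\ell^q_{G_n,x}$ with $\omega\sim\phi^q_{G_n,p}$ via \Cref{prop:uniform flows} so that $\mathrm{supp}(\eta)\subset\omega$, and then invoke sharpness of the random-cluster phase transition \cite{DCsharpness}. You have in fact been more careful than the paper about the uniformity in $G_n$, spelling out the truncation and boundary-condition comparison that the paper leaves implicit in its one-line appeal to sharpness.
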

\begin{proof}

    By \Cref{prop:uniform flows}, there exists a coupling $\mathscr{P}$ of  $\eta \sim \ell_{G_n,x}^q$ and $\omega\sim \phi^q_{G_n,p}$ such that $\textrm{supp}(\eta)\subset \omega$. Thus, the statement follows from the fact that the same is true for $\phi^q_{G_n,p}$ when $p<p_c$ because of sharpness \cite{DCsharpness}.
\end{proof}

Perhaps less obviously, \Cref{prop:uniform flows} also implies that the conclusion of \Cref{cor:immediate_exp_decay} does not hold for $x>x_c$ by taking $G_n$ to be a sequence of tori. This is explained in \Cref{sec:polynomial_lower_bound_q_flow}. 

\subsection{Loop-Cluster coupling measure for Potts lattice gauge theory} \label{sec:Lattice_Gauge} We now turn our attention to graphical representations of the Potts lattice gauge theories. These were introduced by Wegner as examples of thermodynamic systems which lack local order parameters, yet exhibit phase transitions \cite{WegDual} and have later been taken up as toy models of Euclidean gauge theories \cite{KogPotts}. In the following, we show that the $q$-flow measure extends to a divergence-free representation of all lattice gauge Potts models. However, we first need a crash course in exterior differential calculus.

Let $\Lambda$ be a finite cubical complex, i.e. a tuple $(\Lambda_0, \dots, \Lambda_N)$, such that
\begin{enumerate}
    \item For each $k\in \mathbb{N}_0,$ a collection $\Lambda_k$ of $k$-cells, which are copies of the unit hypercube $\{0,1\}^k$. 
    \item For each $k\geq 1$ and $\mathbf{c}\in \Lambda_k$, each of its $k-1$-dimensional faces lies in $\Lambda_{k-1}$.
    \item The set $\cup_{k=0}^N \Lambda_k$ is finite.
\end{enumerate}
 A natural example arises from a finite subgraph $G\subset \mathbb{Z}^d$, where inductively, the $0$-cells are the vertices of $G$, the $1$-cells are the edges of $G$ and where a $k$-cell is any embedded copy of $\{0,1\}^k$ in $G$ (as a graph).

For the unit cube $\{0,1\}^k$ and $1\leq j\leq k$, we let $F_{j}^+(\{0,1\}^k)$ denote the codimension-$1$ face $\{0,1\}^{j-1}\times \{1\}\times \{0,1\}^{k-j}$ and accordingly $F^-_{j}(\{0,1\}^k)=\{0,1\}^{j-1}\times \{0\}\times \{0,1\}^{k-j}$. These definitions then induce face maps $F^+_{j},F^-_{j}:\Lambda_k\to \Lambda_{k-1}$.
 For $q\in\mathbb{N}\backslash\{1\},$ we define a boundary map by $\partial_k:\Lambda_{k}\to (\mathbb{Z}/q\mathbb{Z})^{\Lambda_{k-1}}$ by\footnote{The usual purpose of the signs is to ensure that $\partial \circ \partial=0,$ which in and of itself does not matter to our present story. However, the signs are still important to ensure that the dual map $d_k$ defined in \Cref{sec:elementary} has the correct relation to the Potts lattice gauge theory.} $\partial \mathbf{c}=\sum_{j=1}^k (-1)^{j+1} \left(F_{j}^+(\mathbf{c})-F_{j}^-(\mathbf{c})\right)$.

The two possible orientations of the continuum hypercube $[0,1]^d$ induce a notion of orientation for the cells of a cubical complex. Equivalently, for $k\geq 2,$ an orientation $\mathfrak{c}$ of $\mathbf{c}\in \Lambda_k$ rooted at the vertex $v$ is  given by an ordering $e_{i_1}\wedge e_{i_2}\wedge...\wedge e_{i_k}$  of the $k$ edges adjacent to $v$ modulo even permutations. If $w$ and $v$ are neighbors connected by $e_{i_j}$, the same orientation rooted at $w$ is represented by the opposite orientation to $\tilde{e}_{i_1}\wedge \tilde{e}_{i_2}\wedge...\wedge e_{i_j}\wedge...\tilde{e}_{i_k},$ where $\tilde{e}_{i_l}$ is the unique edge adjacent to $w$ and parallel to $e_{i_l}.$ We denote by $\mathcal{O}(\Lambda_k)$ the set of oriented $k$-cells and for $\mathfrak{c}\in \mathcal{O}(\Lambda_k),$ we denote by $-\mathfrak{c}$ its opposite orientation.
 An oriented $k$-cell $\mathfrak{c}$ represented by $e_{1}\wedge e_{2}\wedge...\wedge e_{k}$ rooted at $v$ then naturally yields the oriented $(k-1)$-cell $F^{\pm}_j(\mathfrak{c})$ rooted at $v$ given by $e_{1}\wedge e_{2}\wedge... \wedge e_{j-1}\wedge e_{j+1}\wedge... \wedge e_{k},$ where the sign $\pm$ is chosen such that $v\in F^{\pm}_j(\mathbf{c})$. For $k=1$, we keep the previous definition of orientation.

The Potts lattice gauge theories (the definition of which is postponed to \eqref{eq:lattice_gauge_Hamiltonian}) live on spaces of $k$-chains on $\mathcal{O}(\Lambda_k)$. For $k\in \mathbb{N},$ the space of $\mathbb{Z}/q\mathbb{Z}$-valued $k$-\textit{chains} $C_k$ is the set of maps 
\begin{align}
   C_k = \left \{ \sigma\in (\mathbb{Z}/q\mathbb{Z})^{\mathcal{O}(\Lambda^{k})} \mid  \sigma_{\mathfrak{c}}=-\sigma_{-\mathfrak{c}} \text{ for every } \mathfrak{c}\in \mathcal{O}(\Lambda_k) \right \}.
\end{align}

 Again, $C_k$ is spanned by elements of the form $\mathbf{1}^{\mathfrak{c}}$ satisfying $\mathbf{1}^{\mathfrak{c}}_\mathfrak{c}=1,$ $\mathbf{1}^{\mathfrak{c}}_{-\mathfrak{c}}=-1$ and $\mathbf{1}^{\mathfrak{c}}_\mathfrak{c'}=0$ otherwise. For each $k\geq 2$, the boundary map $\partial_k$ extends linearly to a map $\partial_k:C_k\to C_{k-1}$ such that 
\begin{align}\label{eq:boundary_general}
\partial_k(\mathbf{1}^{\mathfrak{c}})=\sum_{j=1}^{k} (-1)^{j+1} \left( \mathbf{1}^{F_j^+(\mathfrak{c})}-\mathbf{1}^{F_j^-(\mathfrak{c})} \right).
\end{align}
 We let the boundary map $\partial_1$ agree with the previously given definitions.

 For natural numbers $q\geq 2$ and $k$, and positive real numbers $(x_\mathbf{c})_{\mathbf{c}\in \Lambda_k}$, define the $q$-flow model with cell dimension $k$ on $\textrm{ker}(\partial_k)$ by 
\begin{align}\label{eq:def_lattice_gauge_q_flow}
\ell^q_{\Lambda_k,x}[\eta]\propto \prod_{\mathbf{c}\in \text{supp}(\eta)}x_{\mathbf{c}},\end{align}
where $\textrm{supp}(\eta)=\{\mathbf{c}\in \Lambda_k\mid \forall \mathfrak{c}\in \mathcal{O}(\mathbf{c}): \eta_{\mathfrak{c}}\neq 0\}$ (that is, once again, we do not double count the support).

 Analogously to how we only considered subgraphs containing all vertices in the previous sections, the $k$-dimensional plaquette random-cluster model will be defined on subcomplexes that contain all lower-dimensional cells. We say that a subcomplex $\omega$ of $\Lambda$ is \textit{$k$-spanning} if $\Lambda_j(\omega)=\Lambda_j$ for $0\leq j\leq k-1$ and $\Lambda_{j}(\omega)=\emptyset$ for $j>k$. Thus, a spanning subgraph is a $1$-spanning subcomplex. For a subcomplex $\omega$, we let $\partial_k^{\omega}$ denote the induced maps on $\omega$ as a complex (this might, in general, both restrict the domain and co-domain).
The plaquette random-cluster model of cell dimension $k$ with coefficients in $\mathbb{Z}/q\mathbb{Z}$ and parameters $p=(p_{\mathbf{c}})_{\mathbf{c}\in \Lambda_k}$ on $\Lambda$ is then the random $k$-spanning subcomplex which to a given $k$-spanning subcomplex $\omega$ assigns the probability
\begin{align}\label{eq:new_definition_plaquette_random_cluster}
\phi^q_{\Lambda_k,p}[\omega]\propto \frac{|\ker (\partial_{k-1}^{\omega})|}{|\textrm{Im}(\partial_{k}^{\omega})|}\prod_{\mathbf{c}\in \Lambda_k (\omega)} \frac{p_{\mathbf{c}}}{1-p_{\mathbf{c}}}. 
\end{align}

It is worth noting that for $k=1,$ we recover the usual definition (see item (b) of \Cref{prop:uniform flows}). This is essentially proven along the way in \Cref{lemma:random_cluster_lattice_gauge_with_kernel} below.

Several historic attempts at defining a random-cluster model on general plaquettes were made in the 80s (see e.g. \cite{HistoricPRCM2,HistoricPRCM3,HistoricPRCM4}) but were shown to suffer from problems such as topological anomalies \cite{HistoricPRCM1} (see \cite{duncanPRCM1} for further explication). The plaquette random-cluster model we study here was introduced by Hiraoka and Shirai \cite{HS16} for prime $q$ and extended by Duncan and Schweinhart \cite{duncanPRCM2} to general $q$. In \cite{duncanPRCM2}, the Radon-Nikodym derivative of $\phi^q_{\Lambda_k,p}$ with respect to the Bernoulli product measure is the cardinality of the reduced cohomology of $\omega$ with coefficients in $\mathbb{Z}/q\mathbb{Z}$, $|H^{k-1}(\omega,\mathbb{Z}/q\mathbb{Z})|$. By the universal coefficient theorem for cohomology (see also Corollary 58 of \cite{duncanPRCM2}),
\[H^{k-1}(\omega,\mathbb{Z}/q\mathbb{Z}) \cong H_{k-1}(\omega,\mathbb{Z}/q\mathbb{Z}):=\frac{\ker (\partial_{k-1}^{\omega})}{\textrm{Im}(\partial_{k}^{\omega})}\]
under reasonable assumptions, which justifies our definition of $\phi^q_{\Lambda_k,p}$ in all natural cases of interest (e.g. boxes in $\mathbb{Z}^d$ and $\mathbb{T}_n^d).$ However, there are cubical complexes which do not satisfy the assumptions of \cite[Corollary 58]{duncanPRCM2}.  In \Cref{sec:elementary}, we give an elementary proof showing that the two Radon-Nikodym derivatives give the same measure.

As we saw in \Cref{prop:uniform flows}, when $k=1$, for $\omega\sim \phi^q_{\Lambda_1,p},$ the uniform element of $\ker(\partial^\omega)$ 
has the distribution of $\ell^q_{\Lambda_1,x}$ and that $\phi^q_{\Lambda_1,p}$ can be obtained from $\ell^q_{\Lambda_1,x}$ via sprinkling. 
The new way of writing the plaquette random-cluster model in \eqref{eq:new_definition_plaquette_random_cluster}, enables us to couple it to the $q$-flow measure analogously to the case $k=1$. That is, sampling a divergence free chain on the plaquette random-cluster model always yields a flow model. This highlights the topological background for the original Loop-Cluster coupling. 
 A key step is the following generalization of \eqref{eq:proof}.
\begin{lemma} \label{lemma:random_cluster_lattice_gauge_with_kernel}
For $x_{\mathbf{c}}=\frac{p_{\mathfrak{c}}}{p_{\mathfrak{c}}+q(1-p_{\mathfrak{c}})},$ it holds that 
$
\phi^q_{\Lambda_k,p}[\omega]\propto |\ker(\partial^{\omega}_k)|\prod_{\mathfrak{c}\in \Lambda_k (\omega)} \frac{x_{\mathfrak{c}}}{1-x_{\mathfrak{c}}}.
$
\end{lemma}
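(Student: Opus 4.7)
The plan is to verify the proportionality by a direct algebraic manipulation, exploiting two facts: a first-isomorphism-type identity for the $\mathbb{Z}/q\mathbb{Z}$-module $\partial_k^\omega$, and the fact that the $(k-1)$-skeleton of $\omega$ is always the full $(k-1)$-skeleton of $\Lambda$.

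First, I would simplify the edge-weight factor. Since $x_\mathbf{c} = p_\mathbf{c}/(p_\mathbf{c}+q(1-p_\mathbf{c}))$, a direct calculation gives
\[
\frac{x_\mathbf{c}}{1-x_\mathbf{c}} = \frac{p_\mathbf{c}}{q(1-p_\mathbf{c})}, \quad\text{hence}\quad \prod_{\mathbf{c}\in \Lambda_k(\omega)}\frac{x_\mathbf{c}}{1-x_\mathbf{c}} = q^{-|\Lambda_k(\omega)|}\prod_{\mathbf{c}\in \Lambda_k(\omega)}\frac{p_\mathbf{c}}{1-p_\mathbf{c}}.
\]
Comparing to the definition of $\phi^q_{\Lambda_k,p}$, it therefore suffices to show that
\[
\frac{|\ker(\partial_{k-1}^{\omega})|}{|\operatorname{Im}(\partial_k^{\omega})|} \;\propto\; q^{-|\Lambda_k(\omega)|}\,|\ker(\partial_k^{\omega})|,
\]
where the proportionality constant is independent of $\omega$.

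The key observation is that $\omega$ is $k$-spanning, so $\Lambda_j(\omega)=\Lambda_j$ for all $j\leq k-1$; in particular the chain complex in degrees below $k$ is identical for every such $\omega$, and hence $|\ker(\partial_{k-1}^\omega)| = |\ker(\partial_{k-1})|$ is a constant. It thus remains to prove
\[
|\ker(\partial_k^{\omega})| \cdot |\operatorname{Im}(\partial_k^{\omega})| \;=\; q^{|\Lambda_k(\omega)|}.
\]
This is the abelian-group analogue of rank-nullity: the first isomorphism theorem yields $C_k(\omega)/\ker(\partial_k^\omega) \cong \operatorname{Im}(\partial_k^\omega)$, and because a chain in $C_k(\omega)$ is freely determined by one $\mathbb{Z}/q\mathbb{Z}$ value per unoriented $k$-cell of $\omega$, we have $|C_k(\omega)| = q^{|\Lambda_k(\omega)|}$. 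Putting these two facts together gives the required identity.

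Combining the three pieces — simplification of $x_\mathbf{c}/(1-x_\mathbf{c})$, the $\omega$-independence of $|\ker(\partial_{k-1}^\omega)|$, and the group-theoretic identity $|\ker(\partial_k^\omega)|\cdot|\operatorname{Im}(\partial_k^\omega)|=q^{|\Lambda_k(\omega)|}$ — yields the claimed proportionality. There is no real obstacle here; the only mild subtlety is to remember that $\mathbb{Z}/q\mathbb{Z}$ need not be a field, so that one must invoke the first isomorphism theorem for abelian groups rather than the usual rank-nullity theorem from linear algebra.
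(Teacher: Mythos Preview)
Your proof is correct and follows essentially the same approach as the paper: both use that $\omega$ is $k$-spanning to make $|\ker(\partial_{k-1}^\omega)|$ an $\omega$-independent constant, invoke the first isomorphism theorem for abelian groups to get $|\ker(\partial_k^\omega)|\cdot|\operatorname{Im}(\partial_k^\omega)|=q^{|\Lambda_k(\omega)|}$, and then match this $q$-power against the relation $x_\mathbf{c}/(1-x_\mathbf{c})=p_\mathbf{c}/(q(1-p_\mathbf{c}))$. Your remark about needing the first isomorphism theorem rather than linear-algebra rank-nullity when $q$ is composite is also in the same spirit as the paper's treatment.
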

\begin{proof}
Since $\omega$ is $k$-spanning,  $\ker(\partial_{k-1}^{\omega})=\ker(\partial_{k-1})$ is independent of $\omega$. Hence,
\begin{equation}\label{eq:phidef}
\phi^q_{\Lambda_k,p}[\omega] \propto \frac{1}{|\textrm{Im} (\partial_{k}^{\omega})|} \prod_{\mathbf{c}\in \Lambda_k (\omega)} \frac{p_{\mathbf{c}}}{1-p_{\mathbf{c}}}.
\end{equation}
Applying the first isomorphism theorem to the homomorphism $\partial_k^{\omega}: C_k(\omega)\rightarrow C_{k-1}(\omega)$, we get
\[|\ker(\partial_k^{\omega})| \cdot |\textrm{Im}(\partial_k^{\omega})|=|C_k(\omega)|=q^{|\Lambda_k(\omega)|},\]
where the last equality comes from counting the $k$-chains in $C_k(\omega)$. Plugging this into \eqref{eq:phidef} yields the lemma.
\end{proof} 
\Cref{lemma:random_cluster_lattice_gauge_with_kernel} yields a generalization of \Cref{prop:uniform flows}.  
\begin{proposition}\label{prop:lattice_potts}
Let $\Lambda_k$ be a collection of $k$-cells, $\Omega = \{0,1\}^{\Lambda_k}$, $\Sigma =\ker(\partial_k)$,  $x_{\mathbf{c}}\in (0,1)$ for all $\mathbf{c}\in \Lambda_k$,  $f:\Omega\to 2^{\Sigma}$ be defined by $f(\omega)= \Sigma^{\downarrow}(\omega):=\{\eta \in \Sigma\mid \operatorname{supp}(\eta) \subset \omega\}$  and
	\[\rho[\omega] \propto \prod_{\mathbf{c}\in \Lambda_k (\omega)} \frac{x_{\mathbf{c}}}{1-x_{\mathbf{c}}}, ~\forall \omega\in \Omega, ~\gamma[\eta]\propto 1, ~\forall \eta\in \Sigma. \]
    	Then, under the coupling measure $\mathscr{P}$ defined in Theorem \ref{thm:generalcoupling}, 
    \item[(a)] The marginal  $\mathscr{P}_\Sigma = \ell^q_{\Lambda_k,x}$.
          For $\omega\in\Omega,$ the  conditional measure $\mathscr{P}[\;\cdot \mid \omega]$  is uniform on $\Sigma^{\downarrow}(\omega)$, i.e. a uniform divergence free $q$-coloring of $\omega$.
		\item[(b)] The marginal $\mathscr{P}_\Omega= \phi^q_{\Lambda_k,p}$, with $x_{\mathbf{c}}=\frac{p_{\mathbf{c}}}{p_{\mathbf{c}}+q(1-p_{\mathbf{c}})}$.
        For each $\eta\in\Sigma$, the conditional measure $\mathscr{P}[\;\cdot \mid \eta] = \mathbb{P}_x \cup \delta_{\operatorname{supp}(\eta)}$,  Bernoulli percolation on $\Lambda_k$ with parameter $(x_{\mathbf{c}})_{\mathbf{c}\in \Lambda_k}$ conditioned on all $k$-cells in $\operatorname{supp}(\eta)$ being open.
\end{proposition}

\begin{remark}[Additional couplings]
In \Cref{prop:conditioned_Bernoulli1}, we saw that the relation between the loop O(1) and random-cluster model, does not need the structure of even graphs. Similarly, \Cref{prop:lattice_potts} generalizes upon replacing $\ker(\partial_k)$ with any subset $\Sigma \subset \{0,1\}^{\Lambda_k}$ and there also exists a partial coupling generalizing \Cref{prop:partial_coupling} to the lattice gauge case. Let us also note that Ben-Av. et. al. \cite{ben1990critical},  Hiraoka and Shirai \cite{HS16} and  Duncan and Schweinhart \cite[Proposition 21]{duncanPRCM1} generalized the  coupling between the Potts and random-cluster models to the lattice gauge framework (cf. \Cref{tab:coupling_overview}).  
\end{remark}
 \Cref{sec:duality} reviews how the (generalized) $q$-flow model has the law of the domain wall in a lattice gauge theory generalizing the direct relation between the loop O(1) and Ising models that exist for planar graphs, and puts the coupling into the context of \Cref{thm:generalcoupling}. In \cite[Theorem 18]{duncanPRCM1}, a related but different duality of Potts lattice gauge theories is discussed. The relation is exactly analogous to the relation between the duality of planar random-cluster models and the duality between Ising models and interfaces (i.e. the loop O($1$) model).
 
Let us finally note that the random current expansion has been generalized to lattice gauge Ising models in \cite{aizenman1982geometric, forsstrom2025current}. We believe that it should be possible to prove suitable versions of \Cref{prop:doublecurrent-Xor} and \Cref{prop:doublecurrent-loop} as well. 

\section{Open questions}\label{sec:questions}
We believe that the couplings discussed in this paper can be applied in a myriad of ways and we record some instances that seem particularly promising. 

\subsection{Potential model specific applications}
One of our motivations for generalizing the couplings between the graphical representations was the recent use of the coupling of the loop $\mathrm{O}(1)$ model as the uniform even subgraph of FK-Ising in \cite{ aizenman2019emergent,angel2021uniform, hansen2023uniform}. 
We suspect that the antiferromagnetic loop $\mathrm{O}(1)$ model, which can be studied using \Cref{prop:conditioned_Bernoulli2}, is low-hanging future usecase. 
\begin{question}
For $d\geq 2$, is $\ell_{\mathbb{Z}^d,t}[0 \cc \infty] > 0$ for all $t \geq 1$? 
\end{question}
A positive answer to the question would settle a missing point in the phase diagram for $d = 2$ in \cite{GMM18}. In \cite{hansen2023uniform}, it was proven\footnote{In fact, the same arguments prove percolation for $t \in [1-\varepsilon,1+\varepsilon]$ on $\mathbb{Z}^d.$ Indeed, similarly to \Cref{sec:anti_ferromagnetic_Ising}, one may prove that the marginal on $\Omega$ obtained  in \Cref{prop:conditioned_Bernoulli2} is the complement of a random-cluster model conditioned on $\mathcal{F}_A,$ where $A$ is the set of vertices of odd degree in the full graph.
Since $\mathbb{Z}^d$ is even, this conditioning is trivial and the previous argument carries through.
One can also prove percolation for large $t$ by noting that for $t<1$, $\ell_t$ is dominated by the random-cluster measure $\phi$, which is dominated by Bernoulli percolation. Since $\Z^d$ is itself even, the complement of $\ell_{t}$ is $\ell_{\frac{1}{t}}$ and dominated below by a Bernoulli percolation, which is supercritical if $t$ is large. } in an interval $[1-\varepsilon,1]$ for all $d \geq 2$. But since the loop $\mathrm{O}(1)$ model in general \cite{GMM18,klausen2021monotonicity} and its percolative properties in particular \cite{hansen2024nonuniquenessphasetransitionsgraphical} are not monotone, the question is still open for $t \geq 1$ to our knowledge. Similarly, one may ask whether $\ell_{\mathbb{Z}^d,t}^q[0 \cc \infty] > 0$, for all $t \geq 1, q \geq 2$.

A more challenging question is whether the single random current percolates throughout the supercritical regime of the Ising model as was asked by Duminil-Copin in \cite[Question 1]{DC16}. This question was motivated by the exponential decay of truncated correlation, which was later proven using other methods in \cite{duminil2020exponential}.
\begin{question}[\cite{DC16}]
On $\mathbb{Z}^d$, $d\geq 3$ and $\beta > \beta_c(d)$, does the single random current percolate? 
\end{question}
We refer the interested reader to  related open questions in \cite[Section 6]{hansen2023uniform}, where a partial answer is also given. 

In \Cref{thm:q_state_potts_torus_trick}, it was proven that the $q$-flow model on the torus has exponential decay if and only if the $q$-state Potts model has.  It is natural to ask the follow-up question.

\begin{question}
 For the $q$-flow measure on $\mathbb{Z}^d$, $\ell_{\mathbb{Z}^d,x}^q$, and $x>x_c(d,q)$, is the expected cluster size of $0$ infinite? 
Do non-zero edges percolate?
\end{question}
One could also ask whether \Cref{thm:q_state_potts_torus_trick} has a natural extension to real $q$ using the tools from \cite{chayes1998graphical,deng2007cluster}. 
It also seems promising that the analogue for Potts lattice gauge models could be resolved.  
\begin{question}
Does $\ell^q_{\Lambda_k,x}$ have perimeter law if and only if $\phi^q_{\Lambda_k,p(x,q)}$ has?
\end{question}

\subsection{General questions on extensions of the framework}
The couplings discussed above related the lattice gauge Potts model to its graphical representations. 
However, it is believable  that a more general program can be carried out, cf. Remarks  \ref{rem:nothing_special_edge} and \ref{rem:nothing_special_gauge}. 

\begin{question}
For a general spin model (defined on a compact group), construct its FK-Edwards-Sokal representation and its high-temperature expansion. 
What are the assumptions on the group necessary to get a Loop-Cluster algorithm?
\end{question}

Propositions
\ref{prop:partial_coupling} and \ref{prop:conditioned_Bernoulli2} generalize the Swendsen-Wang algorithm from the space of even graphs to arbitrary subsets of all graphs.  Much work has been devoted to the mixing time of Swendsen-Wang dynamics  (see e.g., \cite{guo2017random,levin2017markov} and references therein).  The ease of sampling the uniform even graph as well as Bernoulli percolation suggests that the  loop-cluster algorithm is a good starting point and it was investigated in \cite{zhang2020loop}.
\begin{question}
    Which of the new algorithms from the general coupling are computationally efficient? 
\end{question}

Finally, from a more abstract point of view, we only stated the main theorem \Cref{thm:generalcoupling} in the case of finite sets, but we know that generalizations exist, as they were already achieved for the XY model in \cite{edwards1988generalization} and a coupling was used in the infinite volume limit in \cite{angel2021uniform}. 

However, we believe that the Ising couplings in \Cref{fig:couplings_horizontal} could be expanded to a more general framework of probability kernels, which could include the (continuum) random-cluster and current representations of the quantum Ising model that have been used to extend several results from the classical to the quantum Ising model  \cite{bjornberg2015vanishing,bjornberg2009phase,Ioffe2009stochastic}. 
\begin{question}
    Let $(\Omega,\mathcal{B}(\Omega),\mathscr{P}_{\Omega})$ and $(\Sigma,\mathcal{B}(\Sigma),\mathscr{P}_{\Sigma})$ be Polish measure spaces and let $f:\Omega\to \mathcal{B}(\Sigma)$ be measurable\footnote{There is a natural $\sigma$-algebra on $\mathcal{B}(\Sigma)$ generated by the Borel measures.}.
    What assumptions on $\mathscr{P}_{\Omega}$, $\mathscr{P}_{\Sigma}$ and $f$ are needed for there to exist a probability measure $\mathscr{P}$ on $\Omega\times \Sigma$ with $\mathscr{P}_{\Omega}$ and $\mathscr{P}_{\Sigma}$ as marginals and such that the conditional measure $\mathscr{P}[\;\cdot \mid \omega]$ is supported on $f(\omega)$ almost surely? Do the graphical representations of the quantum Ising model satisfy any such assumptions? 
\end{question}

\begin{appendix}
\section{The double random current and the XOR Ising model}
The random currents encode correlations of the Ising model and have similarity to both random walk and percolation. They were first introduced in \cite{griffiths1970concavity} and brought to power in \cite{aizenman1982geometric} and have been fueling much of the recent rigorous progress on the Ising model \cite{duminil2022100}, which has made them an object of independent study.

\subsection{Preliminaries: The loop $\mathrm{O}(1)$ model sprinkled with Bernoulli family} \label{sec:l1Bf} 
To ease notation here and in the following, define 
$ t^F :=  \prod_{e \in F} t_e = \prod_{e \in F}\tanh(J_e)$ for each $F \subset E$. 
The following formula for the double random current model will be crucial for our proofs of Propositions \ref{prop:doublecurrent-Xor} and \ref{prop:doublecurrent-loop}. The formula was proved by Lis in \cite[Theorem 3.2]{lis2017planar} for the case $B=\emptyset$. For completeness, we include a proof.

 \begin{proposition}[Lis's formula]\label{prop:Lisformula}
\begin{align}\label{eq:Lisformula}
    \Prbcur^{A,B}[\omega] \propto \id_{\mathcal{F}_A}(\omega) \abs{\Even(\omega)} (\ell^{A\triangle B}\cup \mathbb{P}_{t^2})[\omega]. 
    \end{align}
\end{proposition}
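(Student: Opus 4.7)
The plan is to prove Lis's formula by expanding the definition $\Prbcur^{A,B} = \ell^A \cup \ell^B \cup \Bernoulli_{t^2}$ from \eqref{eq:definition_double_current}, summing out the Bernoulli part, and then performing a change of variables that decouples $F_A$ from $G := F_A \triangle F_B$.

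First, write
\[
\Prbcur^{A,B}[\omega] \propto \sum_{\substack{F_A, F_B, F_0 \subset E\\ F_A \cup F_B \cup F_0 = \omega\\ \partial F_A = A,\ \partial F_B = B}} t^{F_A}\, t^{F_B}\, (t^2)^{F_0}(1-t^2)^{E\setminus F_0}.
\]
Given $(F_A, F_B)$, the configurations of $F_0$ compatible with $F_A \cup F_B \cup F_0 = \omega$ are exactly those $F_0 \subset \omega$ with $F_0 \supset \omega \setminus (F_A \cup F_B)$. Summing the Bernoulli factor over these $F_0$ gives $(1-t^2)^{E\setminus \omega}(t^2)^{\omega \setminus (F_A \cup F_B)}$, since the edges in $F_A \cup F_B$ each contribute $t^2 + (1-t^2) = 1$.

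Next, for each $\omega \subset E$, apply the map $(F_A, F_B) \mapsto (F_A, G := F_A \triangle F_B)$. This is a bijection between $\{(F_A,F_B) : F_A, F_B \subset \omega,\ \partial F_A = A,\ \partial F_B = B\}$ and $\mathcal{E}_A(\omega) \times \mathcal{E}_{A \triangle B}(\omega)$, with inverse $(F_A, G) \mapsto (F_A, F_A \triangle G)$; both directions are straightforward once one notes $F_A \triangle G \subset F_A \cup G \subset \omega$ and $\partial(F_A \triangle G) = A + (A \triangle B) = B$. Under this substitution, $F_A \cup F_B = F_A \cup G$, and a case analysis on the four sets $F_A \cap G$, $F_A \setminus G$, $G \setminus F_A$, and $(F_A \cup G)^c$ shows
\[
t^{F_A}\, t^{F_A \triangle G}\, (t^2)^{\omega \setminus (F_A \cup G)} \;=\; t^G\, (t^2)^{\omega \setminus G},
\]
crucially independent of $F_A$.

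Combining the two steps, the double sum factorizes:
\[
\Prbcur^{A,B}[\omega] \propto (1-t^2)^{E\setminus \omega}\, \bigl|\mathcal{E}_A(\omega)\bigr| \sum_{G \in \mathcal{E}_{A\triangle B}(\omega)} t^G (t^2)^{\omega \setminus G}.
\]
The $G$-sum is exactly the explicit formula for $(\ell^{A\triangle B} \cup \Bernoulli_{t^2})[\omega]$ derived by the same Bernoulli-summation trick as in step one. Finally, invoking the switching principle \eqref{eq:switching_principle} to rewrite $|\mathcal{E}_A(\omega)| = \id_{\mathcal{F}_A}(\omega) |\Even(\omega)|$ yields Lis's formula \eqref{eq:Lisformula}. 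The main obstacle is identifying the change of variables and verifying the collapse identity in step two; once the substitution $G = F_A \triangle F_B$ is in place, the independence of the summand from $F_A$ is the entire content of the proof and the rest is bookkeeping.
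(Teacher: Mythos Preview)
Your proof is correct and follows essentially the same approach as the paper's: both expand $\Prbcur^{A,B}=\ell^A\cup\ell^B\cup\Bernoulli_{t^2}$, perform the change of variables $G=F_A\triangle F_B$ (the paper's $\tilde\eta_2=\eta_1\triangle\eta_2$), observe the summand becomes independent of $F_A$, and finish with the switching principle. The only cosmetic difference is that the paper first factors out $t^{2\omega}$ so the summand collapses to $t^{-\tilde\eta_2}$ without a case analysis, whereas you verify the collapse identity edge-by-edge; both arguments are equally valid.
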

\begin{proof}
Defining briefly $s_e:= \sqrt{1-t_e^2}$ and $s^{2\omega}:=\prod_{e\in \omega} s_e^2=\prod_{e \in \omega}(1-t_e^2) =  \prod_{e \in \omega} \hspace{-5pt}  \sech(J_e)^2$ yields, 
\begin{align*}
\Prbcur^{A,B}[\omega] = \loopmeasure^A \cup \loopmeasure^B \cup \Bernoulli_{t^2}[\omega] 
&\propto \sum_{\partial \eta_1 = A, \partial \eta_2 = B} t^{\eta_1}t^{\eta_2} t^{2(\omega \setminus (\eta_1 \cup \eta_2))} s^{2(E \setminus \omega)}  \id[\eta_1, \eta_2 \subset \omega]\\
&= t^{2\omega} s^{2(E\setminus \omega)} \sum_{\partial \eta_1 = A, \partial \eta_2 = B}  t^{- (\eta_1\triangle \eta_2)}\id[\eta_1, \eta_2 \subset \omega]\\
& =  \id_{\mathcal{F}_A}(\omega)t^{2\omega} s^{2(E \setminus \omega)}\sum_{\partial \eta_1 = A, \partial \tilde{\eta}_2 = A \triangle B}  t^{-\tilde{\eta}_2}\id[\eta_1 \subset \omega] \id[ \tilde{\eta}_2 \triangle \eta_1 \subset \omega]\\
& =  \id_{\mathcal{F}_A}(\omega) s^{2(E \setminus \omega)} \abs{\mathcal{E}_A(\omega)} \sum_{\partial \tilde \eta_2 = A \triangle B, }  t^{\tilde \eta_2} t^{2(\omega \setminus \tilde \eta_2)}\id[\tilde \eta_2 \subset \omega] \\
& \propto \id_{\mathcal{F}_A}(\omega) \abs{\Even(\omega)} (\ell^{A\triangle B}\cup \mathbb{P}_{t^2})[\omega],
\end{align*}
where $\tilde{\eta}_2=\eta_1 \triangle \eta_2$ in the third line, 
and the switching principle \eqref{eq:switching_principle} was used in the last line. 
\end{proof}

The two-parameter family $\ell^A_x \cup \Bernoulli_p$ has nice properties under reweighing by $2^{\abs{\omega}}$ illustrated in \Cref{fig:reweighing_dynamics}.
\begin{lemma}
Let $G=(V,E)$ be a finite  graph. Let $A\subset V$ with $|A|$ even (could be $0$). Then, $\forall \omega\in\{0,1\}^E$,
\begin{align}\label{eq:reweight_by_two}
2^{\abs{\omega}} (\loopmeasure^A_x \cup \Bernoulli_p)[\omega] \propto \left(\loopmeasure^A_{\frac{2x}{1+p}} \cup \Bernoulli_{\frac{2p}{1+ p}}\right)[\omega].
\end{align}
   In particular, with the usual $t=(t_e)_{e\in E}$ and $J=(J_e)_{e\in E}$, it holds that
\begin{align}\label{eq:reweight_identity}
       2^{\abs{\omega}} (\loopmeasure^A_t \cup \Bernoulli_{t^2})[\omega] \propto \Prbcur^A_{2J}.
   \end{align}
\end{lemma}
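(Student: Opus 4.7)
The plan is a short and direct bookkeeping argument: expand the union of independent samples, absorb the factor $2^{\abs{\omega}}$ edge-by-edge into a change of parameters, and then specialize to $x=t$, $p=t^2$ using the double-angle identity for $\tanh$. First I would write out, using the independence of the $\loopmeasure^A_x$ and $\Bernoulli_p$ samples and summing over the hidden Bernoulli edges inside $\eta$,
\[
(\loopmeasure^A_x \cup \Bernoulli_p)[\omega] \;\propto\; \sum_{\eta\subset \omega,\,\partial \eta=A} \prod_{e\in\eta} x_e \prod_{e\in \omega\setminus\eta} p_e \prod_{e\in E\setminus\omega}(1-p_e).
\]
Since every term in the sum has $\eta\subset\omega$, I can split $2^{\abs{\omega}}=\prod_{e\in\eta} 2\cdot \prod_{e\in\omega\setminus\eta} 2$ and push it inside, turning the weights into $2x_e$ on $\eta$, $2p_e$ on $\omega\setminus\eta$, and $(1-p_e)$ on $E\setminus\omega$.

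Next I would compare this to $\bigl(\loopmeasure^A_{\tilde{x}} \cup \Bernoulli_{\tilde{p}}\bigr)[\omega]$ with $\tilde{x}_e=\tfrac{2x_e}{1+p_e}$ and $\tilde{p}_e=\tfrac{2p_e}{1+p_e}$. The crucial (and essentially only) observation is the edge-by-edge identity
\[
\frac{2x_e}{\tilde{x}_e} \;=\; \frac{2p_e}{\tilde{p}_e} \;=\; \frac{1-p_e}{1-\tilde{p}_e} \;=\; 1+p_e,
\]
so \emph{every} edge contributes the same factor $(1+p_e)$ regardless of whether it lies in $\eta$, in $\omega\setminus\eta$, or outside $\omega$. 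Consequently the two sides agree up to the $\omega$-independent constant $\prod_{e\in E}(1+p_e)$, which is exactly what is needed for proportionality and yields \eqref{eq:reweight_by_two}.

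For the specialization \eqref{eq:reweight_identity}, I would substitute $x_e=t_e$ and $p_e=t_e^2$. The double-angle identity $\tanh(2J_e)=\frac{2t_e}{1+t_e^2}$ gives $\tilde{x}_e=\tanh(2J_e)$, i.e.\ the loop parameter of the single current at couplings $2J$. A short computation shows $\tilde{p}_e=\frac{2t_e^2}{1+t_e^2}=1-\frac{1-t_e^2}{1+t_e^2}=1-\sqrt{1-\tanh(2J_e)^2}$, matching the Bernoulli parameter in the decomposition \eqref{eq:definition_single_current} of $\Prbcur^A_{2J}$. Plugging into \eqref{eq:reweight_by_two} then identifies the right-hand side with $\Prbcur^A_{2J}$.

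The argument is elementary algebra; the only point requiring attention is verifying that the three edge-wise ratios truly coincide (so that the reweighting factor is genuinely $\omega$-independent and can be absorbed in the proportionality), but this is the whole content of the lemma and falls out of the definitions $\tilde{x}=\frac{2x}{1+p}$, $\tilde{p}=\frac{2p}{1+p}$.
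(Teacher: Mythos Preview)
Your proof is correct and follows essentially the same approach as the paper: expand the union measure, absorb $2^{|\omega|}$ into the edge weights, identify the new parameters $\tilde{x}=\tfrac{2x}{1+p}$, $\tilde{p}=\tfrac{2p}{1+p}$, and then specialize via the double-angle identity. The only cosmetic difference is that the paper first divides through by $(1-p)^{E}$ to reduce to the two ratios $\tfrac{x}{1-p}$ and $\tfrac{p}{1-p}$ before matching, whereas you check directly that all three edge-wise ratios equal $1+p_e$; both are the same bookkeeping.
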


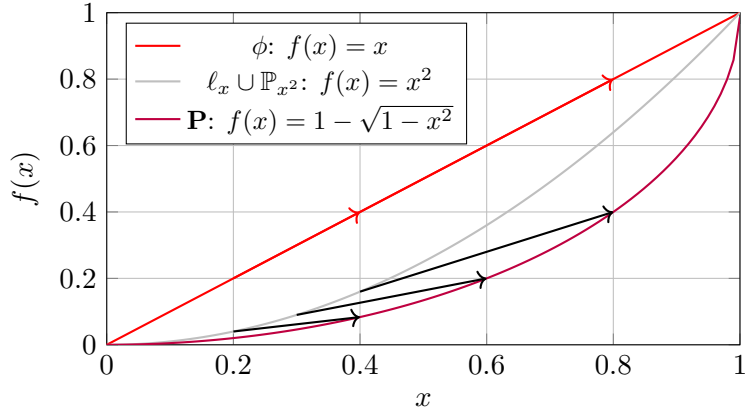
\begin{figure}
    \centering
\begin{tikzpicture}
    \begin{axis}[
        width=10cm, height=6cm,
        grid=both,
        xlabel={$x$}, ylabel={$f(x)$},
        xmin=0, xmax=1,
        ymin=0, ymax=1,
        legend pos=north west,
        legend style={font=\small}
    ]

        \addplot[domain=0:1, samples=100, thick, color=red] {x};
        \addlegendentry{$\phi$: $f(x) = x$}

        \addplot[domain=0:1, samples=100, thick, color=lightgray] {x^2};
        \addlegendentry{$\ell_x\cup \Prb_{x^2}$: $f(x) = x^2$}

        \addplot[domain=0:1, samples=100, thick, color=purple] {1 - sqrt(1 - x^2)};
        \addlegendentry{$\Prbcur$: $f(x) = 1 - \sqrt{1 - x^2}$}

        \addplot[->, thick, color=red] coordinates {
            (0.2, 0.2) (0.4, 0.4) 
        };
        \addplot[->, thick, color=red] coordinates {
            (0.4, 0.4) (0.8, 0.8) 
        };
        \addplot[->, thick, color=black] coordinates {
            (0.2, 0.04) (0.4, 0.0835) 
        };
        \addplot[->, thick, color=black] coordinates {
            (0.3, 0.09) (0.6, 0.2) 
        };
        \addplot[->, thick, color=black] coordinates {
            (0.4, 0.16) (0.8, 0.4) 
        };
    \end{axis}
\end{tikzpicture}
  \caption{The flow of two parameter family $\ell_x \cup \Bernoulli_p$ under reweighing by $2^{\abs{\omega}}$ governed by \eqref{eq:reweight_by_two} is indicated by the arrows: The curve $p=x^2$ flows to the curve of single random current measures, the random-cluster line $p=x$ flows into itself.}
    \label{fig:reweighing_dynamics}
\end{figure}

\begin{proof}
Notice first that, 
  \begin{align*}
         (\loopmeasure^A_x \cup \Bernoulli_p)[\omega]
        \propto \sum_{\eta \subset \omega, \partial \eta = A} x^{\eta} p^{\omega \setminus \eta} (1-p)^{E \setminus \omega} 
        \propto \sum_{\eta \subset \omega, \partial \eta = A} \left( \frac{x}{1-p}\right)^{\eta} \left( \frac{p}{1-p}\right)^{\omega \setminus \eta}.
    \end{align*}
In this reparameterization we see that, 
    \begin{align*}
        2^{\abs{\omega}} (\loopmeasure^A_x \cup \Bernoulli_p)[\omega]
     \propto \sum_{\eta \subset \omega, \partial \eta = A} \left( \frac{2 x}{1-p}\right)^{\eta} \left( \frac{2 p}{1-p}\right)^{\omega \setminus \eta} \propto (\loopmeasure^A_{\tilde x} \cup \Bernoulli_{\tilde p})[\omega]. 
    \end{align*}
where 
$\frac{\tilde x}{1- \tilde p} = \frac{2 x}{1-p}$
    and
$\frac{\tilde p}{1- \tilde p} = \frac{2 p}{1-p}$. 
Then, \eqref{eq:reweight_by_two} follows by solving these equations, and \eqref{eq:reweight_identity} follows from \eqref{eq:reweight_by_two}, \eqref{eq:definition_single_current} and the double angle formulas
\[\tanh(2J_e)=\frac{2\tanh(J_e)}{1+\tanh^2(J_e)},~\hspace{-5pt}  \sech(2J_e)=\frac{1}{1+2\sinh^2(J_e)},\]
which concludes the proof.
\end{proof}

 Starting from \eqref{eq:Lisformula} and using first \eqref{eq:even_formula} and then, \eqref{eq:reweight_identity} yields a formula  which is essential for proving Proposition \ref{prop:doublecurrent-Xor}.
\begin{align}\label{eq:double_J}
\Prbcur_{J}^{A,B}[\omega] \propto \id_{\mathcal{F}_A}(\omega)2^{\kappa(\omega) }  2^{\abs{\omega} }  (\ell_{t}^{A\triangle B}\cup \mathbb{P}_{t^2})[\omega] 
    \propto \id_{\mathcal{F}_A}(\omega)2^{\kappa(\omega) }  \Prbcur_{2J}^{A\triangle B}[\omega]. 
\end{align}

\subsection{The XOR Ising model} The XOR Ising model is the pointwise product of two independent Ising models with couplings \((J_e)_{e \in E}\). 
Recall that $S(\sigma):=\{e=uv\in E: \sigma_u\sigma_v=1\}$ is the set of satisfied edges of a spin configuration $\sigma$ and 
$
  U(\sigma) = E\setminus S(\sigma)  
$
is the set of unsatisfied edges. 

\begin{lemma}\label{lem:XOR_Ising_as_partition_function}
The XOR-Ising measure can be represented as follows, 
\[
\XORI[\sigma] \propto Z_{S(\sigma), 2 J}.
\]
\end{lemma}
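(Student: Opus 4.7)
The plan is to unfold the definition of the XOR measure directly. By definition, $\XORI[\sigma]$ is the law of the pointwise product $\sigma^1\cdot \sigma^2$ of two independent Ising configurations at couplings $J$, so I can write
\[
\XORI[\sigma]=\sum_{\substack{\sigma^1,\sigma^2\in\{\pm1\}^V\\ \sigma^1_v\sigma^2_v=\sigma_v\ \forall v}}\Ising_J[\sigma^1]\Ising_J[\sigma^2].
\]
For each fixed $\sigma^1$, the constraint determines $\sigma^2$ uniquely as $\sigma^2_v=\sigma^1_v\sigma_v$, so the double sum collapses to a single sum over $\sigma^1$.

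Next I would plug in the Boltzmann weights and combine the two exponentials. Using $\Ising_J[\sigma^i]\propto \exp\!\left(\sum_{uv\in E}J_{uv}\sigma^i_u\sigma^i_v\right)$, the product becomes
\[
\Ising_J[\sigma^1]\Ising_J[\sigma^1\cdot\sigma]\propto \exp\!\left(\sum_{uv\in E}J_{uv}\,\sigma^1_u\sigma^1_v\bigl(1+\sigma_u\sigma_v\bigr)\right).
\]
The key observation is now that $1+\sigma_u\sigma_v$ equals $2$ on satisfied edges ($uv\in S(\sigma)$) and $0$ on unsatisfied edges. Hence only the satisfied edges survive the sum in the exponent, with doubled coupling constants.

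Summing over $\sigma^1\in\{\pm1\}^V$ then yields
\[
\XORI[\sigma]\propto \sum_{\sigma^1\in\{\pm 1\}^V}\exp\!\left(\sum_{uv\in S(\sigma)}2J_{uv}\,\sigma^1_u\sigma^1_v\right)=Z_{S(\sigma),2J},
\]
which is exactly the claimed identity. I expect no real obstacle: the only subtle point is that the normalizing constants of the two copies of $\Ising_J$ do not depend on $\sigma$, which is why the proportionality sign in the statement is appropriate. The change-of-variables step $(\sigma^1,\sigma^2)\mapsto(\sigma^1,\sigma^1\cdot\sigma)$ and the vanishing of $1+\sigma_u\sigma_v$ on $U(\sigma)$ together form the entire content of the lemma.
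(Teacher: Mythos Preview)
Your proof is correct and follows essentially the same route as the paper: both reduce the double sum to a single sum via the change of variables $\sigma^2=\sigma^1\cdot\sigma$ and then observe that only satisfied edges contribute, with doubled couplings. The paper phrases this through the unsatisfied-edge representation and the identity $U(\sigma\otimes\sigma_2)=U(\sigma)\triangle U(\sigma_2)$, while you use the equivalent observation $1+\sigma_u\sigma_v=2\cdot\id[uv\in S(\sigma)]$ directly in the Hamiltonian; the content is the same.
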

\begin{proof}
First, the Ising measure can be rewritten as
\begin{align}\label{eq:Ising_model_from_unsatis}
\Ising[\sigma] = \prod_{e\in E} e^{J_e} \prod_{e \in U(\sigma)} e^{- 2 J_e}/ Z_{G,J},
\end{align}
where $
Z_{G,J} = \prod_{e \in E} e^{J_e}  \sum_{\sigma \in \{-1,1\}^V}\prod_{e \in U(\sigma)} e^{- 2 J_e}.
$
Let $\sigma_1 \otimes \sigma_2$ denote the pointwise product of $\sigma_1$ and $\sigma_2$. Using that $U(\sigma \otimes \sigma_2) = U(\sigma) \triangle U(\sigma_2)$, 
    \begin{align*}
\XORI[\sigma] &= \sum_{\sigma_1, \sigma_2} \Ising[\sigma_1] \Ising[\sigma_2] \id[\sigma_1 \otimes \sigma_2 = \sigma]
=  \sum_{\sigma_1, \sigma_2} \Ising[\sigma_1 \otimes \sigma_2] \Ising[\sigma_2] \id[\sigma_1= \sigma] 
= \sum_{\sigma_2} \Ising[\sigma \otimes \sigma_2] \Ising[\sigma_2] \\
&\propto \sum_{\sigma_2} \prod_{e\in U(\sigma \otimes \sigma_2)} e^{-2 J_e} \prod_{e\in U( \sigma_2)} e^{-2 J_e}
\propto \prod_{e\in U(\sigma)}e^{-2 J_e}  \sum_{\sigma_2} \prod_{e\in U( \sigma_2)\setminus U(\sigma)} e^{-4 J_e}\\
&= \prod_{e\in U(\sigma)}e^{-2 J_e}Z_{S(\sigma), 2 J} \prod_{e \in S(\sigma)} e^{- 2 J_e}  \propto Z_{S(\sigma), 2 J}, 
\end{align*}
as claimed.
\end{proof}
    
\section{Additional Ising Couplings and proofs}\label{sec:Ising_proofs}
Before embarking on the proofs of \Cref{prop:doublecurrent-Xor} and \Cref{prop:doublecurrent-loop}, let us state two additional couplings. The four propositions together generalize all couplings in  \Cref{fig:couplings_horizontal}.  Proposition \ref{prop:randomcluster-Ising} with $A=\emptyset$ is the Edwards-Sokal coupling for the Ising model and the random-cluster model with $q=2$ \cite{edwards1988generalization}, which is derived from the Swendsen-Wang Monte Carlo algorithm for the Potts model \cite{swendsen1987nonuniversal}. Proposition~\ref{prop:randomcluster-Ising} with $A \neq \emptyset$ is new.

\begin{proposition}\label{prop:randomcluster-Ising}
	Let $G=(V,E)$ be a finite connected graph. Let $\Omega:=\{0,1\}^E$ be the space of bond configurations and $\Sigma:=\{-1,+1\}^V$ be the space of spin configurations. Define $f:\Omega\to 2^{\Sigma}$ by 
$
      f(\omega) = \{ \sigma \in \Sigma \mid \omega_e = 0, \forall e \not \in S(\sigma) \}.
$ 
For any $A\subset V$ with $|A|$ even, let
	\[\rho[\omega] \propto \id[\omega\in\mathcal{F}_A]\Prb_p[\omega],~\forall \omega\in\Omega,~\gamma[\sigma]\propto 1, ~\forall \sigma\in \Sigma,\]
    where $p=(1-e^{-2J_e})_{e\in E}$.
	Then, under the coupling measure $\mathscr{P}$ defined in Theorem \ref{thm:generalcoupling}, 
	\begin{enumerate}
		\item[(a)] The marginal of $\mathscr{P}$ on $\Sigma$ is $\mu^A$ defined by
		$\mu^A[\sigma] \propto \mu[\sigma] \Prb_{S(\sigma),p}[\mathcal{F}_A].$
		 For each $\omega\in\Omega$ with $\omega\in\mathcal{F}_A$, the conditional measure $\mathscr{P}[\;\cdot \mid \omega]$ on $\Sigma$ is realized by tossing independent fair coins to get $\pm$ for each open cluster.
		\item[(b)] The marginal of $\mathscr{P}$ on $\Omega$ is $\phi_p[\;\cdot \mid \mathcal{F}_A]$.  For each $\sigma\in\Sigma$ with $S(\sigma)\in\mathcal{F}_A$, the conditional measure $\mathscr{P}[\;\cdot \mid \sigma]$ on $\Omega$ is the independent Bernoulli percolation on $S(\sigma)$ with parameter $p$ conditioned on $\mathcal{F}_A$,  $\Prb_{S(\sigma),p}[\;\cdot \mid \mathcal{F}_A]$.
	\end{enumerate}
\end{proposition}

The second proposition is already known in the literature, but we state it for completeness and to show how it follows from \Cref{thm:generalcoupling}.

\begin{proposition}[\cite{aizenman2019emergent,grimmett2007random}]\label{prop:randomcluster-loop}
	Let $G=(V,E)$ be a finite connected graph. Let $\Omega:=\{0,1\}^E$ be the space of bond configurations. For $A\subset V$ with $|A|$ even, let $\Sigma:=\{F\subset E: \partial F=A\}$ be the set of subgraphs of $G$ with sources $A$. Let $f:\Omega\to 2^{\Sigma}$ be defined by $f(\omega)=\{F\subset \omega: \partial F=A\}$. Let
	\[\rho[\omega]\propto \id[\omega\in\mathcal{F}_A] \Prb_t[\omega],~\forall \omega\in \Omega, ~\gamma[\eta]\propto 1, ~\forall \eta\in \Sigma.\]
	Then, under the coupling measure $\mathscr{P}$ defined in Theorem \ref{thm:generalcoupling}, 
	\begin{enumerate}
		\item[(a)] The marginal of $\mathscr{P}$ on $\Sigma$ is $\ell^{A}$. For each $\omega\in\Omega$ with $\omega\in\mathcal{F}_A$, the conditional measure $\mathscr{P}[\;\cdot \mid \omega] = \mathtt{UG}^A[\omega] $  on $\Sigma$ is the uniform subgraph on $f(\omega)$.
		\item[(b)] The marginal of $\mathscr{P}$ on $\Omega$ is $\phi_p[\;\cdot \mid \mathcal{F}_A]$.  For each $\eta\in\Sigma$ with $\partial \eta=A$, the conditional measure $\mathscr{P}[\;\cdot \mid \eta]  = (\Prb_t \cup \delta_{\eta})[\cdot]$, that is,  $\eta$ sprinkled with Bernoulli percolation with parameters $t=(t_e)_{e\in E}$. 
	\end{enumerate}
\end{proposition}

The law of total probability (cf. \Cref{rem:gamma}) implies $\phi_p[\omega \mid \mathcal{F}_A] = (\Prb_t \cup \ell^A)[\omega].$
When $A=\emptyset$, Proposition \ref{prop:randomcluster-loop} says that one may couple the random even subgraph and the random-cluster model with $q=2$; this is proved in \cite[Section 3]{grimmett2007random} (see also \cite{evertz2002new} and \Cref{prop:conditioned_Bernoulli1}). Proposition \ref{prop:randomcluster-loop} with $A\neq \emptyset$ is proved in \cite[Theorem 3.2]{aizenman2019emergent}.

\subsubsection{Simplifications of conditional measures}\label{sec:simplifications_of_conditional} We first note that in Theorem \ref{thm:generalcoupling} with $\gamma$ the uniform measure on $\Sigma$ and thus in Propositions \ref{prop:randomcluster-Ising}-\ref{prop:randomcluster-loop} and Propositions \ref{prop:doublecurrent-Xor}-\ref{prop:doublecurrent-loop}, the conditional measure $\mathscr{P}[\;\cdot \mid \omega]$ is always the uniform measure on $f(\omega)$. Let us extend the comments made in the introduction on conditional measures: For Propositions \ref{prop:randomcluster-Ising}-\ref{prop:randomcluster-loop} and Propositions \ref{prop:doublecurrent-Xor}-\ref{prop:doublecurrent-loop}, it is always the case that
\[\rho[\omega]=\varphi[\cdot | \mathcal{F}_A]\]
for some probability measure $\varphi$ on $\{0,1\}^E$. Then, by part (d) of Theorem \ref{thm:generalcoupling},
\[\mathscr{P}[\;\cdot \mid \eta]=\rho[\;\cdot \mid f^{-1}(\eta)]=\varphi[\;\cdot \mid \mathcal{F}_A \cap f^{-1}(\eta)].\]
In Propositions \ref{prop:randomcluster-Ising} and \ref{prop:doublecurrent-Xor}, $f^{-1}(\sigma)=\{U(\sigma) \text{ is closed}\}$. So we have
\[\mathscr{P}[\;\cdot \mid \sigma]=\varphi[\;\cdot \mid \mathcal{F}_A \cap \{U(\sigma) \text{ is closed}\}]=\varphi_{S(\sigma)}[\;\cdot \mid \mathcal{F}_A].\]
In Proposition \ref{prop:randomcluster-loop} and \ref{prop:doublecurrent-loop}, for each $\eta\subset E$ with $\partial \eta=A$, $f^{-1}(\eta)=\{\eta \text{ is open}\}$. Since $\{\eta \text{ is open}\}\subset \mathcal{F}_A$, 
\[\mathscr{P}[\;\cdot \mid \eta]=\varphi[\;\cdot \mid \mathcal{F}_A \cap \{\eta \text{ is open}\}]=\varphi[\;\cdot \mid \{\eta \text{ is open}\}].\]
We will also provide a slightly different interpretation for $\mathscr{P}[\;\cdot \mid \eta]$ in Proposition \ref{prop:doublecurrent-loop}.

\subsection{Proof of Proposition \ref{prop:randomcluster-Ising}: coupling of \texorpdfstring{$\phi[\cdot|\mathcal{F}_A]$}{ph} and \texorpdfstring{$\mu^A$}{ma}}
By \Cref{thm:generalcoupling}, the marginal of $\mathscr{P}$ on $\Sigma$ is
\begin{align*}
    \mathscr{P}_{\Sigma}[\sigma] &\propto \rho[f^{-1}(\sigma)]\gamma[\sigma] \propto \sum_{\omega\in \Omega} \id[\sigma\in \hspace{-1pt}f(\omega)]\id[\omega\in\mathcal{F}_A]\Prb_p[\omega]\\
    &\propto \hspace{-3pt}\prod_{e\in U(\sigma)}\hspace{-4pt}(1-p_e)\hspace{-11pt}\sum_{\omega\in\{0,1\}^{S(\sigma)}} \hspace{-10pt} \id[\omega\in\mathcal{F}_A] \prod_{e\in S(\sigma)} {p_e}^{\omega_e}(1-p_e)^{1-\omega_e} \\
    &\propto \prod_{e\in U(\sigma)} e^{-2J_e} \Prb_{S(\sigma),p}[\mathcal{F}_A] \propto \mu[\sigma] \Prb_{S(\sigma),p}[\mathcal{F}_A], 
\end{align*}
where we used \eqref{eq:Ising_model_from_unsatis} in the last proportionality. Similarly, the marginal of $\mathscr{P}$ on $\Omega$ is
\[\mathscr{P}_{\Omega}(\omega)\propto \gamma[f(\omega)] \rho[\omega] \propto |f(\omega)|\rho[\omega] \propto 2^{\kappa(\omega)} \Prb_p[\omega] \id[\omega\in \mathcal{F}_A]. \]

\subsection{Proof of Proposition \ref{prop:doublecurrent-Xor}: coupling of \texorpdfstring{$\Prbcur^{A,B}$}{dc} and \texorpdfstring{$\mu^{A,B}$}{mab}}\label{sec:XOR_Ising_coupling}
By \Cref{thm:generalcoupling}, the marginal of $\mathscr{P}$ on $\Sigma$ is
\begin{align*}
    \mathscr{P}_{\Sigma}[\sigma] &\propto \rho[f^{-1}(\sigma)]\gamma[\sigma] \propto \sum_{\omega\in \Omega} \id[\sigma\in f(\omega)] \id[\omega\in\mathcal{F}_A]\Prbcur_{2J}^{A \triangle B}[\omega] \\
    &\propto \prod_{e\in U(\sigma)}\hspace{-2pt}\hspace{-5pt}  \sech(2J_e)\hspace{-15pt} \sum_{\omega\in\{0,1\}^{S(\sigma)}}\hspace{-10pt}\id[\omega\in\mathcal{F}_A] \hspace{-20pt}\sum_{F\subset \omega: \partial F=A\triangle B}\prod_{e\in F} \hspace{-3pt} \tanh(2J_e) \hspace{-3pt} \prod_{e\in\omega\setminus F}\hspace{-7pt}(1-\hspace{-5pt}  \sech(2J_e))\hspace{-10pt}\prod_{e\in S(\sigma)\setminus \omega}\hspace{-12pt}  \sech(2J_e)\\
    &\propto \prod_{e\in U(\sigma)} \hspace{-5pt}  \sech(2J_e) \Prbcur_{S(\sigma),2J}^{A \triangle B}[\mathcal{F}_A]
    \frac{Z_{S(\sigma),2J}^{A \triangle B}}{Z_{S(\sigma),2J}} Z_{S(\sigma),2J} \prod_{e\in S(\sigma)} \hspace{-5pt}  \sech(2J_e)\\
    &\propto \Prbcur_{S(\sigma),2J}^{A \triangle B} [\mathcal{F}_A] \langle \tau_{A \triangle B} \rangle_{S(\sigma),2J} Z_{S(\sigma),2J} \propto \Prbcur_{S(\sigma),2J}^{A \triangle B} [\mathcal{F}_A] \langle \tau_{A \triangle B} \rangle_{S(\sigma),2J} \XORI[\sigma],
\end{align*}
using Lemma \ref{lem:XOR_Ising_as_partition_function} in the last proportionality and in the third proportionality that the partition function for $\Prbcur_{S(\sigma),2J}^{C}$ is
\begin{align*}
&2^{-|V|}Z_{S(\sigma),2J}^{C}\prod_{e\in S(\sigma)}\hspace{-5pt}  \sech(2J_e)\\
& \quad = \sum_{\omega \in \{0,1\}^{S(\sigma)}} \sum_{F\subset \omega: \partial F= C} \prod_{e\in F} \hspace{-5pt}\tanh(2J_e) \prod_{e\in \omega\setminus F}(1-\hspace{-5pt}  \sech(2J_e)) \prod_{e\in S(\sigma) \setminus \omega} \hspace{-12pt}  \sech(2J_e)\\
&\text{with } Z^{C}_{S(\sigma),2J}:=\sum_{\sigma\in\{-1,+1\}^V}\sigma_C\exp[\sum_{uv\in S(\sigma)} 2J_{uv}\sigma_u\sigma_v]. 
\end{align*}
Similarly, the marginal of $\mathscr{P}$ on $\Omega$ can be computed using \eqref{eq:double_J} to be the double random current: 
\begin{align*}
    \mathscr{P}_{\Omega}[\omega]\propto \gamma[f(\omega)] \rho[\omega] \propto |f(\omega)|\rho[\omega]  \propto 2^{\kappa(\omega)} \id[\omega\in\mathcal{F}_A]\Prbcur_{2J}^{A \triangle B}[\omega] \propto \Prbcur^{A,B}_{J}[\omega]. 
\end{align*}

\subsection{Proof of Proposition \ref{prop:randomcluster-loop}: coupling of \texorpdfstring{$\phi[\cdot | \mathcal{F}_A]$}{pf} and \texorpdfstring{$\ell^A$}{ela}}
By \Cref{thm:generalcoupling}, for each $\eta\subset E$ with $\partial \eta=A$, the marginal of $\mathscr{P}$ on $\Sigma$ is:
\begin{align*}
    \mathscr{P}_{\Sigma}[\eta] &\propto \rho[f^{-1}(\eta)]\gamma[\eta]\propto \sum_{\omega\in \Omega} \id[\eta\in f(\omega)] \id[\omega\in\mathcal{F}_A]\Prb_t[\omega] \propto \sum_{\omega\in \Omega} \id[\eta \subset \omega] \Prb_t[\omega] \propto t^{\eta} \propto \ell^{A}[\eta].
\end{align*}

Similarly, for each $\omega\in\Omega$ with $\omega\in\mathcal{F}_A$, we have
\begin{align*}
    \mathscr{P}_{\Omega}[\omega]\propto \gamma[f(\omega)] \rho[\omega] \propto |f(\omega)|\rho[\omega] \propto  \Prb_t[\omega] \abs{\mathcal{E}_A(\omega)}= \Prb_t[\omega] \abs{\mathcal{E}_{\emptyset}(\omega)} \propto \phi_p[\omega | \mathcal{F}_A],
\end{align*}
where we used the switching principle \eqref{eq:switching_principle} and \eqref{eq:definition_random_cluster} in the last two equalities respectively.

\subsection{Proof of Proposition \ref{prop:doublecurrent-loop}: coupling of \texorpdfstring{$\Prbcur^{A,B}$}{PbAB} and \texorpdfstring{$\ell^A$}{ela}}
By \Cref{thm:generalcoupling}, the marginal of $\mathscr{P}$ on $\Sigma$ is: for each $\eta\subset E$ with $\partial \eta=A$,
\begin{align*}
    \mathscr{P}_{\Sigma}[\eta] &\propto \rho[f^{-1}(\eta)]\gamma[\eta] \propto \sum_{\omega\in \Omega} \id[\eta\in f(\omega)] \id[\omega\in\mathcal{F}_A](\ell^{A \triangle B} \cup \Prb_{t^2})[\omega]\\
    & = \sum_{\omega\in \Omega} \id[\eta \subset \omega] (\ell^{A \triangle B} \cup \Prb_{t^2})[\omega]\propto \sum_{F\subset E, \partial F= A \triangle B} t^F (t^2)^{\eta\setminus F} \\ 
    &\propto t^{\eta} \sum_{F\subset E, \partial F= A \triangle B} t^{\eta \triangle F} \propto t^{\eta} \sum_{\tilde{F}\subset E, \partial \tilde{F}= B} t^{\tilde{F}} \propto t^{\eta}.
\end{align*}
For each $\eta\in\Sigma$ with $\partial \eta=A$, we have
\begin{align*}
    \mathscr{P}[\omega \mid \eta]&\propto \frac{\id[\eta \subset \omega] (\ell^{A \triangle B} \cup \Prb_{t^2})[\omega]}{t^{\eta}} \propto \frac{\id[\eta \subset \omega] (s^2)^{E\setminus \omega}\sum_{F\subset \omega, \partial F= A \triangle B} t^F (t^2)^{\omega\setminus F}}{t^{\eta}} \\
    &= \id[\eta \subset \omega] (s^2)^{E\setminus \omega} \sum_{F\subset \omega, \partial F= A \triangle B} t^{\eta \triangle F} (t^2)^{\omega\setminus (\eta \cup F)} \\
    &= \id[\eta \subset \omega] (s^2)^{E\setminus \omega} \sum_{\tilde{F}\subset \omega, \partial \tilde{F}= B} t^{\tilde{F}} (t^2)^{\omega\setminus (\eta \cup \tilde{F})}\propto  \id[\eta \subset \omega](\ell^B \cup \Prb_{t^2} \cup \delta_{\eta})[\omega].
\end{align*}

Similarly, the marginal of $\mathscr{P}$ on $\Omega$ is: for each $\omega\in\Omega$ with $\omega\in\mathcal{F}_A$,
\begin{align*}
    \mathscr{P}_{\Omega}[\omega] \propto |f(\omega)|\rho[\omega] \propto (\ell^{A \triangle B} \cup \Prb_{t^2})[\omega] \abs{\mathcal{E}_A(\omega)}
    =(\ell^{A \triangle B} \cup \Prb_{t^2})[\omega] \abs{\mathcal{E}_{\emptyset}(\omega)} \propto \Prbcur^{A,B}[\omega],
\end{align*}
where we used the switching principle \eqref{eq:switching_principle} in the last equality and Proposition \ref{prop:Lisformula} in the last proportionality.

Let $G=(V,E)$ be a finite connected graph. For any $t\in (0,1),$ the set of measures $\nu$ such that the uniform even graph of $\nu$ has the law of $\ell_{G,t}$ is convex, which is evident from the interpretation of the convex combination $\alpha \nu_1 + (1-\alpha) \nu_2$ as sampling a $\nu_1$-random configuration with probability $\alpha$  and a  $\nu_2$-random configuration with probability $1-\alpha$.     By \Cref{prop:doublecurrent-loop}, for every set $A \subset V$ of even cardinality, the uniform even graph of $\Prbcur^{\emptyset,A}$ is $\ell_t$. They cannot be written as convex combinations of each other:

\begin{lemma} \label{lem:non_trivial_convex_combination_of_double_current}
Let $n\in \mathbb{N}$. If $(\alpha_j)_{1\leq j\leq n}\subseteq (0,1),$ $\sum_{j=1}^n\alpha_j=1$  and $\Prbcur^{\emptyset,A} = \sum_{j=1}^n\alpha_j \Prbcur^{\emptyset,A_j},$ then $A=A_j$ for every $1\leq j\leq n.$ In particular, there are no non-trivial convex combinations $\Prbcur^{\emptyset,A} = \sum_{j=1}^n\alpha_j \Prbcur^{\emptyset,A_j}.$
\end{lemma}
\begin{proof}
Suppose that $\Prbcur^{\emptyset,A}=\sum_{j=1}^n\alpha_j\Prbcur^{\emptyset,A_j}$ for some $\alpha_j\in (0,1)$ with $\sum_{j=1}^{n}\alpha_j=1$.
Plugging in the event $\mathcal{F}_A$ yields that $\Prbcur^{\emptyset,A_j}[\mathcal{F}_A] = 1$ for every $j$. Using \eqref{eq:definition_double_current} with the observation that there is a positive probability that $\ell^{\emptyset}\otimes \Bernoulli_{t^2}$ outputs two empty graphs, these relations imply that 
 $\ell^{A_j}[\mathcal{F}_A] =  1 $ for every $j$. Similar considerations prove that 
$\ell^A[\cup_{j=1}^n \mathcal{F}_{A_j}]=1.$ This implies two combinatorial facts: First of all, any configuration $\eta$ with $\partial \eta\in \{A_j\mid 1\leq j\leq n\}$ has a subgraph $\eta'$ with $\partial \eta'=A.$ Conversely, every configuration $\eta$ with $\partial \eta=A$ has a subgraph $\eta'$ with $\partial\eta'\in \{A_j\mid 1\leq j\leq n\}$. Letting $\eta$ be a minimal configuration with $\partial \eta=A,$ we get a subgraph $\eta'\subset \eta$ with $\partial \eta'\in \{A_j\mid 1\leq j\leq n\}$ and a subgraph $\eta''\subset \eta'$ with $\partial \eta''=A.$ By minimality of $\eta,$ we must have $\eta''=\eta'=\eta$ and hence, $A\in \{A_j\mid 1\leq j\leq n\}$, yielding that $\mathbf{P}^{\emptyset,A}$ can be written as a convex combination of the $n-1$ remaining terms $\mathbf{P}^{\emptyset,A_j}$. Proceeding by induction, we conclude that $A=A_j$ for every $1\leq j\leq n$, which is what we wanted.
\end{proof}
The fact that $\mathbf{P}^{\emptyset,A}[\mathcal{F}_A]=1$ gives some evidence that $\mathbf{P}^{\emptyset,A}$ should be extremal in the convex set of measures with $\ell_{G,t}$. Since $\mathbf{P}^{\emptyset,\emptyset}$ is not contained in the convex hull of the other $\mathbf{P}^{\emptyset,A},$ this indicates that the set should contain at least $2^{\abs{V}-1}$ points. It is natural to ask about the number of extreme points. 
\begin{question}
       Let $G = (V,E)$ be a finite connected graph. For any $t \in (0,1)$, let $\mathcal{U}$ denote the convex set of measures $\nu$ on $\{0,1\}^E$ such that the uniform even graph of $\nu$ has the law of $\ell_{G,t}$. How many extreme points does $\mathcal{U}$ have?
\end{question}

Finally, let us prove a claim about disintegration by Bernoulli percolation that was made in the main text. 
\begin{claim}\label{claim:Bernoulli_uniqueness}
    For any finite graph $G =(V,E)$ and any $p \in (0,1)$ and any probability measure $\mu$ on $\{0,1\}^E$, there is at most one probability measure $\nu$ on $\{0,1\}^E$, such that 
    $
    \mu = \nu \cap \Bernoulli_p
    $
    and at most one probability measure $\gamma$ on $\{0,1\}^E$ such that 
       $
    \mu = \gamma \cup \Bernoulli_p.
    $
\end{claim}
\begin{proof}
    To prove the first uniqueness statement assume   $
    \mu = \nu \cap \Bernoulli_p
    $ and consider the full graph $E$. It holds that $\mu[E] = p^{\abs{E}} \nu[E],$ which uniquely determines $\nu[E]$.   Now, iteratively, let $\omega\subseteq E$ and suppose that $\nu[\eta]$ is determined for all supergraphs $\omega \subsetneq \eta$. Then, $\nu[\omega]$ is uniquely determined by the equation
    $\mu[\omega] = \sum_{\eta \supset \omega} p^{|\omega|}(1-p)^{|\eta\setminus \omega|}\nu[\eta].$The second statement is proven similarly, by iterating starting from the empty graph or by applying the first statement to the complement.  
\end{proof}

\section{Coupling measures for conditional Bernoulli percolation: Examples}\label{sec:coupling_examples}

There is a myriad of other potential models beyond the ones mentioned above where a coupling in the form of \Cref{prop:conditioned_Bernoulli1} or \Cref{prop:conditioned_Bernoulli2} could be applied. Examples include the random $d$-regular graph, the hard-core model,  random graphs with forbidden vertex degrees \cite{grimmett2010random}, the maximum planar subgraph, $k$-out subgraphs \cite{frieze2017random}, corner percolation \cite{pete2008corner}, dimers \cite{kenyon2014conformal} and weighted, minimal or degree-constrained spanning  trees. In \cite{bauerschmidt2017local}, the local law for random $d$-regular graphs was computed via a local modification of the configuration, which may be seen as a much more complicated relative of \Cref{prop:conditioned_Bernoulli1}.
Let us go slightly more into detail with two such models. 

\subsection{The antiferromagnetic Ising model} \label{sec:anti_ferromagnetic_Ising}
In \Cref{prop:conditioned_Bernoulli2}, let $x_e> 1, p_e=1-1/x_e$ for each $e\in E$ and $\Sigma:=\{\eta \in\{0,1\}^E: \partial \eta=\emptyset\}$ be the set of all even subgraphs of $G$. Then, $\mathscr{P}_{\Sigma}$ is the (antiferromagnetic) loop measure $\ell_{G,x}$ with edge weight $x_e>1$ defined through the formula \eqref{eq:loopO(1)def}. \Cref{prop:conditioned_Bernoulli2} says that $\ell_{G,x}$ has the following resampling property: $\ell_{G,x}$ can be obtained by sampling $\omega\sim \ell_{G,x}\cap\mathbb{P}_{G,1-1/x}$ and, conditionally on $\omega$, sampling a uniform even supergraph of $\omega$. 
\\
\\ Assume in the following that $G$ is planar with nearest neighbor interactions, we now explain that if $\omega \sim \ell_{G,x}\cap \mathbb{P}_{G,1-1/x},$ then the dual edges corresponding to (the open edges in) $\omega$ are distributed like (the open edges from) the random-cluster representation of the antiferromagnetic Ising model with couplings $(-\log(x_e)/2)_{e\in E^*}$ which can be found in \cite{kasai1988percolation,Newman1990Rep,Newman1994Rep}; see also \cite{aizenman2025geometric} for more geometric representations related to frustration.
Let $G^*=(V^*,E^*)$ be the dual graph of $G$. Note that since $G$ is finite, $V^*$ contains the unique infinite face of $G$. The dual edge of $e\in E$ is denoted by $e^*$, and $\omega_{e^*}=1$ if and only if $\omega_e=0$. First note that 
\begin{align}\label{eq:psi}
\ell_{G,x}\cap \mathbb{P}_{G,1-1/x}[\omega]& \hspace{-2pt}\propto \sum_{\eta: \omega \subset \eta, \partial \eta=\emptyset} \hspace{-12pt}\ell_{G,x}[\eta]\mathbb{P}_{G,1-1/x}[\text{}\omega \text{ is open}, \text{}\eta\setminus \omega \text{ is closed}]\nonumber\\
    &\propto \prod_{e\in \omega}(x_e-1) \abs{\{\eta \mid \omega\subset \eta, \partial \eta=\emptyset\}}\propto \prod_{e\in \omega}(x_e-1) \abs{\mathcal{E}_{\partial \omega}(\omega^c)}\nonumber\\
    &=\prod_{e\in \omega}(x_e-1) \abs{\mathcal{E}_{\emptyset}(\omega^c)}\id[\omega \text{ has an even supergraph}],
\end{align}
where $\omega^c:=E\setminus \omega$ and we used the switching principle \eqref{eq:switching_principle} in the last equality. Euler's formula says
\[\kappa(\omega^c)=\abs{V}-\abs{\omega^c}+f(\omega^c)-1,\]
where $f(\omega^c)$ is the number of faces of $\omega^c$ (including the unique infinite face). Since there is a one-to-one correspondence between the faces in $\omega^c$ and the clusters of
\newline \noindent $(\omega^c)^*:=\{e^*\in E^* \mid e\in \omega\}$, we have $f(\omega^c)=\kappa((\omega^c)^*)$. Combining these with \eqref{eq:even_formula}, we get
\begin{equation}\label{eq:even_formula_1}
    \abs{\mathcal{E}_{\emptyset}(\omega^c)}=2^{\kappa((\omega^c)^*)-1}.
\end{equation}

We claim that $\omega$ has an even supergraph if and only if $(\omega^c)^*$ is bipartite. Suppose $\omega$ has an even supergraph $\tilde{\omega}$. Then,  $\tilde{\omega}$ can be viewed as Ising interfaces living on $G$ (so Ising spins live on $G^*$). So $(\tilde{\omega}^c)^*=\{e^*\in E^* \mid e\in \tilde{\omega}\}$  is bipartite (the set of $+$ spins and its complement form a partition of $V^*$). Therefore, $(\omega^c)^*\subset (\tilde{\omega}^c)^*$ is also bipartite. On the other hand, suppose $(\omega^c)^*$ is bipartite. Then, one may assign $+$ or $-$ to each vertex in $V^*$ such that each edge in $(\omega^c)^*$ has endpoints with different signs. Let $\eta:=\{uv \in E^*: u \text{ and } v \text{ have different signs}\}$ be the set of all edges with endpoints having different signs. Then, $(\eta^c)^*$ can be viewed as the Ising interfaces living on $G$ and thus $(\eta^c)^*$ is even. Since $(\omega^c)^*\subset \eta$, we get $\omega \subset (\eta^c)^*$, which completes the proof of the claim.

Combining the claim with \eqref{eq:psi} and \eqref{eq:even_formula_1},
\[\ell_{G,x}\cap\mathbb{P}_{G,1-1/x}[\omega]\propto \prod_{e\in \omega^*}(1/x_e)\prod_{e\in (\omega^c)^*}(1-1/x_e)2^{\kappa((\omega^c)^*)}\id[(\omega^c)^* \text{ bipartite}]. \]
That is, for each $\omega\sim \ell_{G,x}\cap\mathbb{P}_{G,1-1/x}$, $(\omega^c)^*=\{e^*\in E^* \mid e\in \omega\}$ has the distribution of $\phi_{G^*,1-1/x}[\cdot\mid \text{bipartite}]$.

\subsection{The arboreal gas} One could hope that  \Cref{prop:conditioned_Bernoulli2} could shed some light on the arboreal gas $\mathcal{A}$, where $\Sigma = \{F \subset \Omega \mid F \text{ is a forest} \}$ and conventionally $x_e = \beta$, for each $e \in E$. 
It is conjectured that the two-dimensional model should have exponential decay for all values of $\beta$ \cite{CaraccioloForests} (with a Polyakov type scaling as $\beta \to \infty$).  As of now, only absence of percolation has been proven \cite{Bauerschmidt2021PlaneForest}. In three dimensions and above, the measure has a regime of percolation \cite{Bauerschmidt2024PercoForest}. 
If we fix a $\beta\gg1$ and set $p = 1- \frac{\varepsilon}{\beta}$ for some $\varepsilon\ll 1$,
then, to obtain $\eta\sim \mathcal{A}_{\beta}\cap \mathbb{P}_{p},$ a tiny fraction of the edges are deleted from the original forest $F\sim \mathcal{A}_{\beta}$. But since every cluster of $F$ is a tree, the measure $\mathcal{A}_\beta \cap \mathbb{P}_{p}$ nevertheless has exponential decay; $\mathcal{A}_\beta \cap \mathbb{P}_{p}[v \cc w] \leq p^{\text{dist}(v,w)}$, for all $v, w \in V$. The coupling tells us that to recover the full arboreal gas $\mathcal{A}_\beta,$ one would need to take an arboreal gas with parameter $\frac{\varepsilon}{1+\varepsilon}$ on the graph where all clusters of $\eta$ are collapsed to single vertices. Since the set of forests forms a decreasing event, by FKG, this conditional measure is stochastically dominated by Bernoulli percolation with parameter $\mathbb{\varepsilon}$. In conclusion, edges deleted independently with probability $\frac{\varepsilon}{\beta}$ can be more than compensated by a follow-up independent addition of edges with probability $\varepsilon$.

In two dimensions, one can potentially also use that the dual graph to a forest is a connected graph, which also fits into the framework of \Cref{prop:conditioned_Bernoulli1} and \Cref{prop:partial_coupling} by letting $\Sigma$ be the set of connected graphs.

\section{Polynomial lower bound for the supercritical $q$-flow model}\label{sec:polynomial_lower_bound_q_flow}
In this appendix, we show how the loop-cluster coupling \Cref{prop:uniform flows} can be used to show an equality of critical points for the $q$-state Potts model and the $q$-flow model. 
 The following follows the arguments made in the case $q=2$ from \cite[Section 4]{hansen2023uniform} fairly closely, and we note that similar tricks of exploiting the topology of the torus are well-established in the literature, see e.g., \cite{crawford2020macroscopic,duncanPRCM1,kozma2013lower}.

Let $\mathbb{T}_n^d=(\mathbb{Z}^d/2n\mathbb{Z}^d)$ denote the $d$-dimensional torus of period $2n$ and denote by $\mathfrak{C}_n$ the event on $\{0,1\}^{E(\mathbb{T}_n^d)}$ that $\omega$ contains a simple loop winding all the way around the torus once\footnote{A more formal definition might say that the pre-image of $\omega$ under the quotient map $\mathbb{Z}^d\to \mathbb{T}_n^d$ satisfies, that there is some cardinal direction $e_j$ and some vertex $v$ such that $v$ and $v+2ne_j$ are connected to each other inside the box $B_n\cup(2n e_j+B_n)$}. Since the event $\mathfrak{C}_n$ is translation-invariant, the sharp threshold theorem for monotonic measures \cite[Theorem 3.2]{GG06} implies that there exists a $\tilde{p}_c$ such that for all $p>\tilde{p}_c,$ there exists $C>0$ such that \begin{align} \label{eq:C_n}
    \phi^{q}_{\mathbb{T}_n^d,p}[\mathfrak{C}_n]\geq C
\end{align}for all $n$ and for $p<\tilde{p}_c$, there exists $C>0$ such that $\phi^{q}_{\mathbb{T}_n^d,p}[\mathfrak{C}_n]\leq \exp(-Cn)$. We claim that $\tilde{p}_c=p_c.$ Indeed, $\tilde{p}_c\geq p_c$  by sharpness of the phase transition on $\mathbb{Z}^d$ \cite{DCsharpness}. Conversely, if $p>p_c,$ one may prove that $\phi^q_{\mathbb{T}_n^d,p}[\mathfrak{C}_n]$ decays at worst polynomially, implying that $p_c\geq \tilde{p}_c.$

Roughly speaking, the event $\mathfrak{C}_n$ is topologically rigid: If $\omega\in \mathfrak{C}_n,$ then a density of its divergence free subgraphs will also lie in $\mathfrak{C}_n$. Below, $x_c=\frac{p_c}{p_c+q(1-p_c)}.$
\begin{theorem} \label{thm:q_state_potts_torus_trick}
Let $d\in \mathbb{N}$, $q\geq2$ be integer  and $x\in (x_c,1].$ Then, there exists $C>0$ such that for every $n\in \mathbb{N}$: 
$$
\ell^q_{\mathbb{T}_n^d,x}[0 \cc \partial B_{n-1}]\geq \frac{C}{n^{d-1}}.
$$
\end{theorem}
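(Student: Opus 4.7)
The plan is to combine the coupling of Proposition~\ref{prop:uniform flows} with the sharp threshold estimate \eqref{eq:C_n} and topological rigidity of winding: whenever the random-cluster configuration $\omega$ carries a simple loop winding around the torus, a definite fraction of the uniform divergence-free $q$-colorings of $\omega$ have the same property for $\mathrm{supp}(\eta)$. Concretely, for $p>p_c$ corresponding to $x>x_c$, I sample $\omega\sim\phi^q_{\mathbb{T}_n^d,p}$ and $\eta\sim\ell^q_{\mathbb{T}_n^d,x}$ via the coupling $\mathscr{P}$, so that $\eta\mid\omega$ is uniform on $\ker(\partial^\omega)$ and $\mathrm{supp}(\eta)\subset\omega$. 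By \eqref{eq:C_n}, $\mathscr{P}[\omega\in\mathfrak{C}_n]\geq C$, and the goal is to upgrade this to $\mathscr{P}[\mathrm{supp}(\eta)\in\mathfrak{C}_n]\geq C'>0$.

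For the main step, fix a coordinate direction $e_j$ and define the winding homomorphism $W_j:\ker(\partial^\omega)\to\mathbb{Z}/q\mathbb{Z}$ as the signed sum of $\eta_{\mathfrak{e}}$ over edges in any hyperplanar cut transverse to $e_j$ (well-defined and independent of the cut by divergence-freeness). If $\omega\in\mathfrak{C}_n$ contains a simple loop $\gamma$ winding once in direction $j$, then the constant $1$-coloring of $\gamma$ (extended by $0$) realizes $W_j=\pm1$, so $W_j$ is surjective and each fiber has size $|\ker(\partial^\omega)|/q$. Consequently a uniform $\eta\in\ker(\partial^\omega)$ satisfies $W_j(\eta)\neq 0$ with probability $(q-1)/q$, and in that case decomposing the restriction of $\eta$ to $\mathrm{supp}(\eta)$ along a spanning tree of $\mathrm{supp}(\eta)$ produces a fundamental simple cycle of $\mathrm{supp}(\eta)$ on which $W_j$ is nonzero, i.e.\ a simple loop of $\mathrm{supp}(\eta)$ winding in direction $j$. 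Hence $\mathscr{P}[\mathrm{supp}(\eta)\in\mathfrak{C}_n]\geq C(q-1)/q=:C'$.

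The conclusion is a translation-invariance step. A simple loop winding once in direction $j$ projects surjectively onto the $j$-th coordinate of $\mathbb{T}_n^d$, so it has length at least $2n$, and if it passes through $0$ it must meet $\partial B_{n-1}$. Summing over $v\in V(\mathbb{T}_n^d)$ the indicator that $v$ lies on a winding loop of $\mathrm{supp}(\eta)$, taking expectations under $\ell^q_{\mathbb{T}_n^d,x}$, and using vertex-transitivity,
\[
\ell^q_{\mathbb{T}_n^d,x}[0\text{ lies on a winding loop of }\mathrm{supp}(\eta)]\;\geq\;\frac{2n\,C'}{(2n)^d}\;=\;\frac{C''}{n^{d-1}},
\]
which yields the theorem. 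The main subtlety I foresee is the homological content of the middle step — surjectivity of $W_j$ on $\ker(\partial^\omega)$ when $\omega\in\mathfrak{C}_n$ and the claim that $W_j(\eta)\neq 0$ forces a simple winding loop inside $\mathrm{supp}(\eta)$. Both are $H_1$ statements for graphs with $\mathbb{Z}/q\mathbb{Z}$ coefficients, slightly delicate for composite $q$, but integer cycle decomposition followed by reduction mod $q$ suffices. The remainder closely parallels the $q=2$ torus trick from \cite[Section~4]{hansen2023uniform}.
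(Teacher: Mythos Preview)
Your proof is correct and follows the same architecture as the paper's: couple $\ell^q$ to $\phi^q$ via Proposition~\ref{prop:uniform flows}, use \eqref{eq:C_n} to get $\omega\in\mathfrak{C}_n$ with positive probability, show that conditionally on this the uniform $\eta\in\ker(\partial^\omega)$ has nonzero winding with probability $(q-1)/q$ (via the same shift-by-$\gamma$ bijection), and finish by translation invariance. The difference is in the topological rigidity step. The paper cuts along a hyperplane $H$, zeroes the crossing edges to obtain $\tilde\eta$, and uses that $\sum_v(\partial\tilde\eta)_v=0$ over each cluster of $\mathrm{supp}(\tilde\eta)$ to deduce that nonzero flux forces a long $\tilde\eta$-path from $H$ to $H+e_H$. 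You instead decompose $\eta$ over fundamental cycles of a spanning forest of $\mathrm{supp}(\eta)$ and observe that $W_j(\eta)\neq 0$ forces some fundamental cycle to have nonzero integer winding, hence to be a simple loop of length at least $2n$. Both arguments are short; yours is more homological in flavor and produces a simple winding loop in $\mathrm{supp}(\eta)$ directly, whereas the paper's hyperplane cut produces a long open path. One cosmetic point: a fundamental cycle may wind more than once, so strictly you obtain ``$\mathrm{supp}(\eta)$ contains a simple loop with nonzero winding'' rather than $\mathrm{supp}(\eta)\in\mathfrak{C}_n$; this is harmless, since any such loop still has length $\geq 2n$ and the final union bound goes through unchanged. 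Your worry about composite $q$ is unnecessary here: the fundamental cycles are integer objects, and $W_j(\gamma_e)\equiv 0\pmod q$ for all $e$ would force $W_j(\eta)=0$, regardless of whether $q$ is prime.
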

\begin{proof}

Let $p$ satisfy $x=\frac{p}{p+q(1-p)}$ and couple $\eta\sim \ell^q_{\mathbb{T}_n^d,x}$ and $\omega\sim \phi^q_{\mathbb{T}_n^d,p}$ via \Cref{prop:uniform flows}.

We first note that if $q$ is even, then sampling $\eta$ as a uniform divergence free form on $\omega$ and then taking all edge weights mod $2$ yields a uniform even subgraph of $\omega$. Thus, the theorem follows by the argument for $q=2$ case from \cite{hansen2023uniform}. We next present a proof which works for all $q$.

Fix an arbitrary co-dimension 1 hyperplane $H\subset \mathbb{T}_n^d$ with orthogonal direction $e_H$.
For any $\eta \in \ker(\partial)$, define the flow\footnote{Using that $\eta\in \ker(\partial)$, the flow is invariant under translating the hyperplane $H$ in the direction of $e_{H}$.} around the torus  in direction $e_H$ by $\mathcal{F}(\eta) =  \sum_{v \in H} \eta_{(v,(v+e_H))}.$  Let $\mathscr{P}$ be the coupling from \Cref{prop:uniform flows} for $G=\mathbb{T}_n^d$ and fix a random-cluster configuration $\omega\in \mathfrak{C}_n$ with a simple open oriented loop $\Gamma\subset\mathcal{O}(\omega)$ wrapping around the torus once and let $\gamma=\sum_{\mathfrak{e}\in \Gamma}\mathbf{1}^{\mathfrak{e}}.$

Then, $\gamma \in \ker(\partial)$ and
$\mathcal{F}(\gamma) \in \{\pm 1\}$. By linearity, we may assume $\mathcal{F}(\gamma)=1$.  
Furthermore, for any $k\in \N$,  $\eta\mapsto \eta+ k\cdot \gamma$ is a bijection from $\ker(\partial^{\omega})$ into itself and accordingly, conditionally on $\omega$, its uniform divergence-free coloring $\eta$ has the same distribution as $\eta+k\cdot \gamma.$ By linearity,
$
\mathcal{F}(\eta + k\cdot \gamma)  = \mathcal{F}(\eta) + k\mathcal{F}(\gamma) = \mathcal{F}(\eta) + k,
$
which means that the conditional probability that $\eta$ has non-zero flow is $\frac{q-1}{q}$, implying
\begin{align}
 \ell^q_{\mathbb{T}_n^d,x}[\mathcal{F}(\eta) \neq 0]
& = \sum_{\omega \in \Omega, \eta \in \Sigma} \mathscr{P}[\eta, \omega] \id[\mathcal{F}(\eta) \neq 0] \\
 &\geq \label{eq:coupling_use}
   \sum_{\omega \in \mathfrak{C}_n} \phi^q_{\mathbb{T}_n^d,p}[\omega]\sum_{\eta \in \Sigma}\mathscr{P}[\eta \mid \omega] \id[\mathcal{F}(\eta) \neq 0]=\frac{q-1}{q} \phi^q_{\mathbb{T}_n^d,p}[\mathfrak{C}_n].  
\end{align}
Now, let us argue that any $\eta \in \ker(\partial)$ with $\mathcal{F}(\eta) \neq 0$ has long loops. Let 
$$\tilde \eta(\mathfrak{e}) = \eta(\mathfrak{e}) \id[\mathfrak{e} \not \in  \cup_{v\in H}\{(v,v+e_H),(v+e_H,v)\}]
$$
be the configuration where edges connecting to one side of the hyperplane are set to 0. 
For every cluster $\mathcal{C}$ of $\textrm{supp}(\tilde \eta)$, $\sum_{v \in \mathcal{C}} (\partial \tilde\eta)_v = 0$, since every $(v,w)\in \mathcal{O}(\mathcal{C})$ contributes with $\tilde \eta_{(v,w)}$ at $w$ and $-\tilde \eta_{(v,w)}$ at $v$. Therefore, $\sum_{v } \id[v \overset{\tilde \eta}{\cc} H](\partial \tilde \eta)_v = 0$, as it is the sum over all clusters intersecting $H$. Since $\sum_{v \in H} (\partial \tilde \eta)_v =\mathcal{F}(\eta) \neq 0,$  there exists at least one vertex $w  \not \in H$ with $(\partial \tilde{\eta})_w\neq 0$ and $w\overset{\tilde{\eta}}\cc H$.
Since $\eta \in \ker(\partial)$, $(\partial \tilde \eta)_w = 0$ for all $w \not \in H \cup (H+e_H)$, such a vertex must lie in $H+e_H$ and thus
$H \overset{\tilde \eta}{\cc} (H+e_H)$. In particular, there is a long path in $\textrm{supp}(\eta)$ from $H$ to $H+e_H$. 
 Thus, by translation invariance, \eqref{eq:C_n} and \eqref{eq:coupling_use}, for some constants $c,C >0$, 
$$
\ell^q_{\mathbb{T}_n^d,x}[0\cc \partial B_{n-1}]\geq \frac{\ell^q_{\mathbb{T}_n^d,x}[\mathcal{F}(\eta) \neq 0]}{c n^{d-1}}\geq\frac{q-1}{qn^{d-1}}C,
$$
which finishes the proof.
\end{proof}

\section{Equivalence of two definitions of the plaquette random-cluster model}\label{sec:elementary}
In our rewriting of the plaquette random-cluster model, we relied on the universal coefficient theorem. In general, this needs a certain homology group to be a free $\mathbb{Z}/q\mathbb{Z}$-module, which might fail if $q$ is not prime. However, the appeal to the universal coefficient theorem turns out to be unnecessary. In this section, we give an elementary proof that the plaquette random-cluster model weights may be given in terms of the boundary map. Throughout, we use the notation for cubical complexes introduced in \Cref{sec:Lattice_Gauge}. The space $C_{k}$ comes equipped with a natural, symmetric, non-degenerate bilinear form
$$
\langle\sigma, \sigma'\rangle =\sum_{\mathbf{c}\in \Lambda_k} \sigma_{\mathfrak{c}} \sigma'_{\mathfrak{c}},
$$
where the product is taken in $\mathbb{Z}/q\mathbb{Z}$ and we pick\footnote{The reason for not just taking the average of the two choices is that the form would not be well-defined for $q=2$.} any of the two orientations of $\mathbf{c}$ at which to evaluate $\sigma$ (the quantity $\sigma_{\mathfrak{c}} \sigma'_{\mathfrak{c}}$ is invariant under flipping the orientation).

The boundary map $\partial_k$ admits an adjoint $d_k:C_{k-1}\to C_k$ characterized by the property that $\langle \partial_k \sigma,\sigma'\rangle=\langle \sigma,d_k\sigma'\rangle$ for all $\sigma\in C_k$ and $\sigma'\in C_{k-1}$. Loosely speaking, $d_k$ attaches to a $(k-1)$-cell all the $k$-cells for which the $(k-1)$-cell is in the boundary.\footnote{With \Cref{sec:duality} in mind, this immediately allows one to check that $d_k$ agrees with $\sigma\mapsto (\partial^{d-k+1}(\sigma^*))^*$ in the cases where the dual complex has defined boundary maps.}
Furthermore, this pairing respects rank-nullity, as we prove below. In the following, for abelian groups $H$ and $G$, $\textrm{Hom}(H,G)$ denotes the abelian group of homomorphisms from $H$ to $G$ (with addition given by pointwise addition).

\begin{lemma} \label{lemma:rank-nullity}
As abelian groups (and hence, as $\mathbb{Z}/q\mathbb{Z}$-modules) for all $k\geq 1$, 
$$
C_{k-1}/\ker(d_k)\cong \mathrm{Im}(\partial_{k}).
$$
In particular, $|C_{k-1}|=|\ker(d_k)|\cdot |\mathrm{Im}(\partial_k)|.$
\end{lemma}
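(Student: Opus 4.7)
The plan is to apply the first isomorphism theorem to $d_k$ and then argue that its image is isomorphic to $\mathrm{Im}(\partial_k)$ by identifying $d_k$ with the transpose of $\partial_k$ in coordinates.

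First, the first isomorphism theorem applied to $d_k\colon C_{k-1}\to C_k$ yields $C_{k-1}/\ker(d_k)\cong \mathrm{Im}(d_k)$, so it suffices to prove $\mathrm{Im}(d_k)\cong \mathrm{Im}(\partial_k)$. To that end, I would fix an orientation on each $j$-cell for $j\in\{k-1,k\}$, which identifies $C_j$ with the free module $(\mathbb{Z}/q\mathbb{Z})^{\Lambda_j}$ in such a way that $\langle\cdot,\cdot\rangle$ becomes the standard coordinate-wise dot product. In these coordinates $\partial_k$ is represented by an integer matrix $M$ whose entries lie in $\{-1,0,+1\}$ by \eqref{eq:boundary_general}, and the adjointness relation $\langle \partial_k\sigma,\sigma'\rangle=\langle \sigma, d_k\sigma'\rangle$ together with the non-degeneracy of the pairing force $d_k$ to be represented by the transpose $M^{T}$.

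Next, lifting $M$ to an integer matrix and applying Smith normal form over $\mathbb{Z}$, one writes $M=PDQ$ with $P,Q$ unimodular and $D=\mathrm{diag}(d_1,\dots,d_r,0,\dots,0)$. Since $P$ and $Q$ remain invertible modulo $q$, both $\mathrm{Im}(M)$ and $\mathrm{Im}(M^T)$, viewed over $\mathbb{Z}/q\mathbb{Z}$, are identified via these changes of basis with $\bigoplus_{i=1}^{r}d_i(\mathbb{Z}/q\mathbb{Z})$, hence are isomorphic to each other as $\mathbb{Z}/q\mathbb{Z}$-modules. The ``in particular'' cardinality statement then follows from $|C_{k-1}|=|\ker(d_k)|\cdot|C_{k-1}/\ker(d_k)|$.

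The main conceptual step is the identification of $d_k$ with the transpose of $\partial_k$; once this is established, the isomorphism of the two images is a routine consequence of Smith normal form. A more abstract alternative route would invoke Pontryagin duality: the non-degeneracy of the pairing gives a self-duality $C_j\cong \mathrm{Hom}(C_j,\mathbb{Z}/q\mathbb{Z})$ under which $d_k$ is identified with the dual of $\partial_k$, and then $\mathrm{Im}(d_k)\cong \mathrm{Im}(\partial_k)$ follows by dualizing the short exact sequence $0\to \ker\partial_k\to C_k\to \mathrm{Im}\,\partial_k\to 0$ and using the self-injectivity of $\mathbb{Z}/q\mathbb{Z}$ over itself.
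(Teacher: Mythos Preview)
Your proposal is correct, and the Smith normal form route is a genuinely different argument from the one in the paper. The paper does not pass through $\mathrm{Im}(d_k)$ at all: instead it first proves that every finite $\mathbb{Z}/q\mathbb{Z}$-module $\mathbb{G}$ satisfies $\mathbb{G}\cong\mathrm{Hom}(\mathbb{G},\mathbb{Z}/q\mathbb{Z})$ (via the structure theorem for finite abelian groups), then observes that the non-degenerate bilinear form descends to a non-degenerate pairing $C_{k-1}/\ker(d_k)\times \mathrm{Im}(\partial_k)\to\mathbb{Z}/q\mathbb{Z}$, obtaining injections of each side into the $\mathrm{Hom}$ of the other; combining these with the self-duality gives equal cardinalities, hence isomorphisms. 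Your alternative Pontryagin/self-injectivity sketch is in the same spirit as this, but your primary argument is not.

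Your approach is more concrete and has the pleasant by-product of explicitly identifying both images with $\bigoplus_i d_i(\mathbb{Z}/q\mathbb{Z})$ in terms of the Smith invariants of the integer lift of $\partial_k$; it also makes the role of the transpose completely transparent. The paper's approach, on the other hand, is coordinate-free and isolates exactly the abstract fact being used (self-duality of finite $\mathbb{Z}/q\mathbb{Z}$-modules), which is what lets it generalize immediately to arbitrary finite abelian coefficient groups as noted in the remark following the lemma. Both arguments are short; neither is clearly superior.
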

\begin{remark}
Note that for $q$ prime, $\mathbb{Z}/q\mathbb{Z}$ is a field and therefore, the lemma is usual rank-nullity for adjoints.
\end{remark}
\begin{proof}
Let $\mathbb{G}$ be a finite $\mathbb{Z}/q\mathbb{Z}$ module (i.e. every element of $\mathbb{G}$ has order dividing $q$). By the classification of finitely generated abelian groups,  $\mathbb{G}\cong \oplus_{i\in I} G_i,$ where each $G_i$ is a cyclic group and a $\mathbb{Z}/q\mathbb{Z}$ module - in particular, $|G_i|$ divides $q$.
Since $G_i$ is cyclic, any $\phi\in \textrm{Hom}(G_i,\mathbb{Z}/q\mathbb{Z})$ is uniquely determined by $\phi(g)$ for a given generator $g$ of $G_i$. Accordingly, $\textrm{Hom}(G_i,\mathbb{Z}/q\mathbb{Z})\cong H$, where $H<\mathbb{Z}/q\mathbb{Z}$ is the subgroup of elements with order dividing $|G_i|.$
By the Euclidean division algorithm, $H$ is cyclic and generated by $k\cdot 1$ for the smallest $k\geq 1$ such that $k\in H,$ which is $k=q/|G_i|$. In particular, $G_i\cong H\cong \textrm{Hom}(G_i,\mathbb{Z}/q\mathbb{Z}).$ Extending over the direct sum, we get $\mathbb{G}\cong \textrm{Hom}(\mathbb{G},\mathbb{Z}/q\mathbb{Z}).$ In particular, this holds for $\mathbb{G}=C_{k-1}/\ker(d_k)$ and $\mathbb{G}=\textrm{Im}(\partial_k).$

Now, in our case, by non-degeneracy, 
$$
\sigma\in \ker(d_k)\quad \Longleftrightarrow \quad\langle d_k\sigma,\sigma'\rangle=0\quad \forall \sigma'\in C_{k} \quad \Longleftrightarrow \quad \langle \sigma,\partial_k\sigma'\rangle=0\quad  \forall \sigma'\in C_{k}.
$$
Accordingly, there is a well-defined bilinear form $C_{k-1}/\textrm{ker}(d_k)\times  \textrm{Im}(\partial_k)\to \mathbb{Z}/q\mathbb{Z}$ given by $$
(\sigma+\ker (d_k),\partial _k\sigma')\mapsto \langle\sigma,\partial_k\sigma'\rangle,
$$ and this bilinear form is nondegenerate. In particular, we get injective homomorphisms
$$
C_{k-1}/\ker d_k\hookrightarrow \textrm{Hom}(\textrm{Im}(\partial_k),\mathbb{Z}/q\mathbb{Z}) \qquad \qquad \textrm{Im}(\partial_k)\hookrightarrow \textrm{Hom}(C_{k-1}/\ker (d_k),\mathbb{Z}/q\mathbb{Z}),
$$
implying $|C_{k-1}/\ker(d_{k})|\leq |\textrm{Im}(\partial_k)|$ and $|\textrm{Im}(\partial_k)|\leq |C_{k-1}/\ker(d_k)|.$ In particular, the sizes are the same, and the injections are, in fact, isomorphisms.
\end{proof}
\begin{remark}\label{rem:nothing_special_gauge}
Just like in    \Cref{lemma:Counting_divergence}, we would get a similar statement for an arbitrary finite abelian coefficient group $\mathbb{G}$. The reason is again the classification of finitely generated abelian groups. One simply checks that if $C^{\mathbb{G}}_k$ denotes the $k$-chains with coefficients in $\mathbb{G},$ then $C^{\mathbb{G}\oplus \mathbb{G}'}_k\cong C_k^{\mathbb{G}}\oplus C_k^{\mathbb{G}'},$ and $\partial_k$ respects this splitting.
\end{remark}

Similarly to before, for a subcomplex $\omega$, we let $d_k^{\omega}$ denote the corresponding co-boundary map on $\omega$. The cohomological definition of the plaquette random-cluster model due to Duncan and Schweinhart \cite{duncanPRCM2} is given by the weights:
$$
\phi^q_{\Lambda_k,p}[\omega]\propto \left(\prod_{\mathbf{c}\in \Lambda_k (\omega)} \frac{p_{\mathbf{c}}}{1-p_{\mathbf{c}}}\right) \frac{|\ker (d_{k}^{\omega})|}{|\textrm{Im}(d_{k-1}^{\omega})|}=\left(\prod_{\mathbf{c}\in \Lambda_k (\omega)} \frac{p_{\mathbf{c}}}{1-p_{\mathbf{c}}}\right)|H^{k-1}(\omega,\mathbb{Z}/q\mathbb{Z})|.
$$
\begin{lemma} The plaquette random-cluster model satisfies
$$
\phi^q_{\Lambda_k,p}[\omega]\propto \left(\prod_{\mathbf{c}\in \Lambda_k (\omega)} \frac{p_{\mathbf{c}}}{1-p_{\mathbf{c}}}\right) \frac{|\ker (\partial_{k-1}^{\omega})|}{|\mathrm{Im}(\partial_{k}^{\omega})|}.
$$
\end{lemma}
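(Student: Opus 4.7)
The identity to establish is
\[
\frac{|\ker(d_k^{\omega})|}{|\mathrm{Im}(d_{k-1}^{\omega})|} = \frac{|\ker (\partial_{k-1}^{\omega})|}{|\mathrm{Im}(\partial_{k}^{\omega})|}
\]
for every $k$-spanning subcomplex $\omega$ (possibly up to a global constant, but I expect equality). The first observation is that $\omega$ being $k$-spanning forces $C_{j}(\omega)=C_{j}$ for every $j\leq k-1$, so $\ker(\partial_{k-1}^{\omega})=\ker(\partial_{k-1})$ and $\mathrm{Im}(d_{k-1}^{\omega})=\mathrm{Im}(d_{k-1})$ are both independent of $\omega$. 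Thus, among the four quantities appearing in the identity, only $|\ker(d_k^{\omega})|$ and $|\mathrm{Im}(\partial_{k}^{\omega})|$ vary with $\omega$, and it suffices to show that their product is independent of $\omega$.

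To that end, I will apply Lemma \ref{lemma:rank-nullity} to the subcomplex $\omega$ (viewed as a finite cubical complex in its own right, so that the lemma applies verbatim). This yields
\[
|C_{k-1}(\omega)| = |\ker(d_k^{\omega})| \cdot |\mathrm{Im}(\partial_k^{\omega})|,
\]
and since $\omega$ is $k$-spanning, $|C_{k-1}(\omega)|=|C_{k-1}|$, which is a constant depending only on $\Lambda$. Therefore $|\ker(d_k^{\omega})| \cdot |\mathrm{Im}(\partial_k^{\omega})|$ is indeed independent of $\omega$, which is already enough to show that the two definitions of $\phi^q_{\Lambda_k,p}$ are proportional.

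To actually identify the proportionality constant (and confirm it is $1$ up to the normalising constant absorbed into the $\propto$), I will combine the previous relation with the constant-in-$\omega$ factors. The first isomorphism theorem for $d_{k-1}:C_{k-2}\to C_{k-1}$ gives $|\mathrm{Im}(d_{k-1})|=|C_{k-2}|/|\ker(d_{k-1})|$, while Lemma \ref{lemma:rank-nullity} with $k$ replaced by $k-1$ gives $|C_{k-2}|=|\ker(d_{k-1})|\cdot |\mathrm{Im}(\partial_{k-1})|$; combining these two yields $|\mathrm{Im}(d_{k-1})|=|\mathrm{Im}(\partial_{k-1})|$. Similarly, the first isomorphism theorem for $\partial_{k-1}$ gives $|\ker(\partial_{k-1})|=|C_{k-1}|/|\mathrm{Im}(\partial_{k-1})|$. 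Plugging everything in,
\[
\frac{|\ker(d_k^{\omega})|}{|\mathrm{Im}(d_{k-1}^{\omega})|} = \frac{|C_{k-1}|}{|\mathrm{Im}(\partial_k^{\omega})| \cdot |\mathrm{Im}(\partial_{k-1})|} = \frac{|\ker(\partial_{k-1})|}{|\mathrm{Im}(\partial_k^{\omega})|} = \frac{|\ker(\partial_{k-1}^{\omega})|}{|\mathrm{Im}(\partial_k^{\omega})|},
\]
which is the desired equality on the nose.

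The only point that might need care is verifying that Lemma \ref{lemma:rank-nullity} really does apply to $\omega$ as a cubical complex; but the lemma was proved for \emph{any} finite cubical complex, and a $k$-spanning subcomplex of $\Lambda$ is such a complex (with its inherited face maps and orientations). Apart from this bookkeeping, the argument is a two-step application of rank-nullity and its adjoint version from Lemma \ref{lemma:rank-nullity}, so I do not anticipate any serious obstacle.
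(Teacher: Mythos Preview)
Your proof is correct and follows essentially the same route as the paper: note that $k$-spanningness makes $C_{k-1}(\omega)$, $\partial_{k-1}^{\omega}$ and $d_{k-1}^{\omega}$ independent of $\omega$, then invoke Lemma~\ref{lemma:rank-nullity} to trade $|\ker(d_k^{\omega})|$ for $|C_{k-1}|/|\mathrm{Im}(\partial_k^{\omega})|$. The only difference is that you go further than the paper by chasing the constants to obtain exact equality of the two ratios, whereas the paper is content with proportionality (which is all the lemma asserts).
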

\begin{proof}
Just like in the proof of \Cref{lemma:random_cluster_lattice_gauge_with_kernel}, $d_{k-1}^{\omega}=d_{k-1},$ $\partial_{k-1}^{\omega}=\partial_{k-1}$ and $C_{k-1}(\omega)=C_{k-1}$ for any $k$-spanning subcomplex $\omega$. So by \Cref{lemma:rank-nullity}, 
\begin{align*}
\phi^q_{\Lambda_k,p}[\omega] &\propto \left(\prod_{\mathbf{c}\in \Lambda_k (\omega)} \frac{p_{\mathbf{c}}}{1-p_{\mathbf{c}}}\right) {|\ker d_k^{\omega}|}=\left(\prod_{\mathbf{c}\in \Lambda_k (\omega)} \frac{p_{\mathbf{c}}}{1-p_{\mathbf{c}}}\right) \frac{|C_{k-1}(\omega)|}{ |\textrm{Im}(\partial^{\omega}_k)|}\\
&\propto  \left(\prod_{\mathbf{c}\in \Lambda_k (\omega)} \frac{p_{\mathbf{c}}}{1-p_{\mathbf{c}}}\right) \frac{|\ker(\partial^{\omega}_{k-1})|} {|\textrm{Im}(\partial^{\omega}_k)|},
\end{align*}
which was what we wanted.
\end{proof}

\section{Further examples}
The zoo of Swendsen-Wang type algorithms via Edwards-Sokal type couplings is enormous as was already noted by Edwards and Sokal \cite{edwards1988generalization} and later extended by Chayes and Machta \cite{chayes1998graphical}.  Recently, such generalizations were used to study the Blume-Capel \cite{gunaratnam2024existence} and $\phi^4$ models \cite{gunaratnam2025supercritical}.  Let us mention some examples here that do not quite as easily lend themselves to the Edwards-Sokal level of generality.

\subsection{Spin Models and Domain Walls}\label{sec:duality}
Kramers-Wannier duality was used to predict the value of the critical point of the two-dimensional Ising model in 1941 \cite{kramers1941statistics}. In 1971, Wegner generalized the duality \cite{WegDual} to higher dimensions, where it turns out that the dual model to an Ising model is a lattice gauge theory. Since then, it was generalized to more general groups, see the review \cite{savit1980duality} and references therein. The duality goes via identifying a high-temperature expansion of the model with the domain walls of the dual model at low temperature. We will argue that the identification of a spin model with its domain walls both fits into \Cref{thm:generalcoupling} and the cubical complexes above. 

To do that, fix a finite abelian coefficient group $\mathbb{G}$. The (infinite) cubical complex $\mathbb{Z}^d$ has a dual cubical complex $(\mathbb{Z}^d)^*,$ the cells of which are the cells of $\mathbb{Z}^d+(1/2,1/2,...,1/2),$ for instance the $0$-cells are the centres of the $d$-cells of $\mathbb{Z}^d$ . The continuous extension of each $k$-cell  $\mathbf{c}$ of $\mathbb{Z}^d$ then intersects a unique $d-k$-cell $\mathbf{c}^*$ of $(\mathbb{Z}^d)^*$ and to every finite cubical complex $\Lambda\subset \mathbb{Z}^d$, we attach a dual complex $\Lambda^*,$ where $\mathbf{c}^*\in \Lambda^{*}$ if and only if $\mathbf{c}\in \Lambda$. We denote the $k$-chains on $\Lambda^*$ by $C^k.$ Note that $\Lambda^*$ is typically not a complex in the sense of the previous definition since it is typically not closed under taking the boundary map. However, when it makes sense (either partially or fully), we will denote the boundary maps on $\Lambda^*$ by $\partial^k$.

 The duality of cells extends to a duality map\footnote{Note that in the case $\mathbb{G}=\mathbb{Z}/2\mathbb{Z}$, this is not the usual duality of planar percolation models. A dual edge would be declared open \textit{when} the primal edge is.} $*: C_k\to C^{d-k}$. 
 This induces co-boundary operators $d^k:C^{k-1}\to C^k$ given by $d^k(\sigma^*)= \partial_{d-k+1}(\sigma)^*$. In particular, $\sigma\in \textrm{Im}(\partial_{d-k+1})\Longleftrightarrow \sigma^*\in \textrm{Im}(d^k)$. 
 Suppose now that $\Lambda^*$ is the dual complex of a finite cubical complex $\Lambda,$ fix $1\leq k\leq  d$, weights $(J_{\mathbf{c}})_{\mathbf{c}\in \Lambda^{*,k}}$ and consider now an even function $h:\mathbb{G}\to \mathbb{R}$ (that is, $h(-g)=h(g)$ for all $g\in \mathbb{G}$). 

 We define a Hamiltonian $H:C^{k-1}\to \mathbb{R}$ by\footnote{Since we defined $q$-flow measure on the primal lattice, we have to define the Hamiltonian of the lattice gauge theory on the dual.}
\begin{align}\label{eq:lattice_gauge_Hamiltonian}
H(\sigma^*)=-\sum_{\mathbf{c}\in \Lambda^{*,k}} J_{\mathbf{c}} \cdot h(d^k(\sigma^*)_{\mathfrak{c}}),
\end{align}
 where we choose an arbitrary orientation of $\mathbf{c}$ at which to evaluate $d^k(\sigma^*)_{\mathfrak{c}}$, which is well-defined by evenness of $h$.
 Since $d^1(\id_{v})=\sum_{w\sim v} \mathbf{1}_{(w,v)}$ for any $0$-cell $v$, if $k=1,$ $\mathbb{G}=\mathbb{Z}/q\mathbb{Z}$ and $h(g)=2\id_{g= 0},$ we recover (a rescaled version of) the ordinary Potts Hamiltonian.

 We get an associated probability measure
 $$
\mu^*_{\Lambda^{k-1}}[\sigma^*]\propto \exp(- H(\sigma^*)).
 $$
 Then, we can attach a dual model $\mu_{\Lambda_{d-k}}$ on $C_{d-k}$ with
\begin{align}
 \mu_{\Lambda_{d-k}}[\eta] &\propto  \mu^{*}_{\Lambda^{k-1}}[(d^k)^{-1}(\eta^*)] 
    \propto 
    |(d^k)^{-1}(\eta^*)|
    \exp\left(\sum_{\mathbf{c}\in \Lambda^{*,k}} J_{\mathbf{c}} \cdot h\left(\eta^*_{\mathfrak{c}}\right)\right)\nonumber \\
    &\propto \id_{\eta\in \textrm{Im}(\partial_{d-k+1})}\exp\left(\sum_{\mathbf{c}\in \Lambda_{d-k}} J_{\mathbf{c}} \cdot h\left( \eta_{\mathfrak{c}}\right)\right),
\end{align}
 where, in the third proportionality, we used that $d^{k}$ is a group homomorphism, so that for any $\eta\in C_{d-k}$, $(d^{k})^{-1}(\eta^*)$ is either empty or in bijection with $(d^k)^{-1}(0).$ The latter is the case exactly when $\eta^* \in \textrm{Im}(d^k)$, which definitionally is equivalent to $\eta \in \textrm{Im}(\partial_{d-k+1})$.
When $\mathbb{G} = \mathbb{Z}/q\mathbb{Z}$ and $h(g)=2\id_{g= 0} = \id_{g= 0}-\id_{g\neq 0}+1$ the generalized $q$-flow model is a domain wall measure,
\begin{align}\label{eq:q_flow_is_dual_lattice}
     \mu_{\Lambda_{d-k}}[\cdot] = \ell_{\Lambda_{d-k},x}^q[\; \cdot \mid \Im(d^k)],
 \end{align} 
 where $x_\mathbf{c}:=\exp[-2J_{\mathbf{c}} \id_{\eta_{\mathfrak{c}} \neq 0}]$
 and thus in the Ising case of $q=2$, $\mu$ has the law of the classical domain walls.
 
 It is not a fluke that the domain wall measure $\mu$ is only supported on $\textrm{Im}(\partial_{d-k+1})\subset \ker (\partial_{d-k}).$
 Indeed, if the cubical complex has trivial homology in degree $k$, (as is always the case for e.g. a box in $\mathbb{Z}^d$),
 then the two sets coincide (and $\mu_{\Lambda_{d-k}} = \ell_{\Lambda_{d-k}}^q$),  but otherwise, there are even configurations which are not the domain walls of any spin configurations. For instance, the simple loop in the torus $\mathbb{T}_n^2$ which is the image of $(-n,-n+1,...,n-1,n)$ under the quotient map is an even subgraph of the torus, but it is not the domain walls of an Ising configuration, since the number of times the spin changes as one goes around the torus in the vertical direction must be even (so that you end up with the same spin as you started with).

We summarize the above discussion below in a form compatible with the statements of our other couplings.
\begin{proposition}[Spin Models and Domain Walls]\label{prop:KW}
    Let $\Lambda$ be a finite cubical subcomplex of $\mathbb{Z}^d$, $\mathbb{G}$ a finite abelian group, $1\leq k\leq d$, $(J_{\mathbf{c}})_{\mathbf{c}\in \Lambda^{*,k}}$ be positive coupling constants and $h:\mathbb{G}\to \mathbb{R}$ be even. Let $\Omega:=\mathrm{Im}(\partial_{d-k+1})$  and $\Sigma:=C^{k-1}$. Define $f:\Omega\to 2^{\Sigma}$ by $f(\eta) = \{\sigma \in \Sigma \mid d^k(\sigma) = \eta^* \} = (*\circ d^k)^{-1}(\eta)$ and
     $$
     \rho[\eta]\propto \prod_{\mathbf{c}\in {\Lambda_{d-k}}}\exp(J_{\mathbf{c}} h(\eta_{\mathfrak{c}})), \;\forall \eta\in \Omega, \qquad \gamma[\sigma]\propto 1,\;\forall\sigma\in\Sigma.
     $$
     Then, under the coupling measure $\mathscr{P}$ defined in Theorem \ref{thm:generalcoupling}, 
    \begin{enumerate}
		\item[(a)] The marginal of $\mathscr{P}$ on $\Sigma$ is the spin model $\mu^{*}_{\Lambda^{k-1}}$. 
        For each $\eta \in\Omega$,  the conditional measure $\mathscr{P}[\;\cdot \mid \eta]$ is the uniform measure on $f(\eta)$, that is the uniform assignment of a spin configuration consistent with the domain walls $\eta$.
		\item[(b)] The marginal of $\mathscr{P}$ on $\Omega$ is the domain wall measure $\mu_{\Lambda_{d-k}}$.  For each $\sigma \in\Sigma$, the conditional measure $\mathscr{P}[\;\cdot \mid \sigma] = \delta_{(d^k(\sigma))^*}$, the Dirac measure on the dual of the co-boundary of $\sigma$ corresponding to taking the domain walls of the spin configuration. 
	\end{enumerate}
\end{proposition}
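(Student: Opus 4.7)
The plan is a direct application of \Cref{thm:generalcoupling}; the content of the proposition is identifying the fibers of $f$ in the language of the duality between $\Lambda$ and $\Lambda^*$. I would begin by establishing two fiber-theoretic facts about the map $* \circ d^k : C^{k-1} \to C_{d-k}$. First, since $*$ is a bijection and $d^k$ is a group homomorphism, $* \circ d^k$ is a homomorphism with image $*\bigl(\mathrm{Im}(d^k)\bigr) = \mathrm{Im}(\partial_{d-k+1}) = \Omega$ (using the characterization $\tau \in \mathrm{Im}(\partial_{d-k+1}) \Leftrightarrow \tau^* \in \mathrm{Im}(d^k)$ recalled in \Cref{sec:duality}). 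Hence every fiber $f(\eta) = (* \circ d^k)^{-1}(\eta)$ with $\eta \in \Omega$ is a coset of $\ker(* \circ d^k) = \ker(d^k)$, so $|f(\eta)| = |\ker(d^k)|$ is a constant independent of $\eta$. Second, for any $\sigma \in \Sigma$, the preimage $f^{-1}(\sigma) = \{\eta \in \Omega : (d^k(\sigma))^* = \eta\}$ is the singleton $\{(d^k(\sigma))^*\}$, which does lie in $\Omega$ by the same characterization.

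With these in hand, \Cref{thm:generalcoupling}(a) immediately gives $\mathscr{P}_\Omega[\eta] \propto \gamma[f(\eta)]\rho[\eta] \propto \rho[\eta]$ for $\eta \in \Omega$, which coincides with $\mu_{\Lambda_{d-k}}$ because the support of the latter is exactly $\Omega$ and the Boltzmann weights match by construction. Similarly, \Cref{thm:generalcoupling}(b) gives $\mathscr{P}_\Sigma[\sigma] \propto \rho[(d^k(\sigma))^*] \propto \prod_{\mathbf{c} \in \Lambda_{d-k}} \exp\bigl(J_\mathbf{c}\, h\bigl((d^k(\sigma))^*_\mathfrak{c}\bigr)\bigr)$. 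The cell duality identifies $\mathbf{c} \in \Lambda_{d-k}$ with $\mathbf{c}^* \in \Lambda^{*,k}$ and, by definition of $*$, $((d^k(\sigma))^*)_\mathfrak{c} = (d^k(\sigma))_{\mathfrak{c}^*}$ up to an orientation choice absorbed by the evenness of $h$; under the natural identification $J_\mathbf{c} = J_{\mathbf{c}^*}$, the product reindexes to $\exp(-H(\sigma))$, yielding $\mathscr{P}_\Sigma = \mu^*_{\Lambda^{k-1}}$.

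The two conditional statements then follow at once from parts (c) and (d) of \Cref{thm:generalcoupling}: since $\gamma$ is uniform, $\mathscr{P}[\cdot \mid \eta] = \gamma[\cdot \mid f(\eta)]$ is uniform on $f(\eta)$; and since $f^{-1}(\sigma)$ is a singleton, $\mathscr{P}[\cdot \mid \sigma] = \rho[\cdot \mid f^{-1}(\sigma)] = \delta_{(d^k(\sigma))^*}$. The main obstacle, such as it is, is purely bookkeeping: one must commit to conventions for orientations in the evaluation of $h$ and for the coupling constants indexed over primal versus dual cells. Both are routine once we note that the evenness of $h$ makes the orientation choice immaterial and that the cell duality $\mathbf{c} \leftrightarrow \mathbf{c}^*$ supplies the canonical identification of constants.
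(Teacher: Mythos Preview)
Your proof is correct and follows essentially the same approach as the paper: the discussion preceding the proposition establishes precisely the two fiber-theoretic facts you isolate (that $|f(\eta)|=|\ker(d^k)|$ is constant because fibers are cosets, and that $f^{-1}(\sigma)$ is the singleton $\{(d^k(\sigma))^*\}$), after which the marginals and conditionals drop out of \Cref{thm:generalcoupling} exactly as you describe. Your write-up is, if anything, slightly cleaner in making the application of \Cref{thm:generalcoupling} explicit rather than leaving it implicit in the surrounding discussion.
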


\subsection{The loop \texorpdfstring{O($n$)} \text{ } model and the  \texorpdfstring{$1+1=2$} \text{} coupling}\label{sec:loopO(n)}

The loop O($n$) model on the hexagonal lattice $\mathbb{H}$ is the statistical mechanics model with weights
$$
\ell_{G,x,n}[\eta]\propto \id_{\partial \eta=\emptyset} x^{|\eta|}n^{\kappa^0(\eta)},
$$
with $x,n\geq 0$, $G\subset \mathbb{H}$ finite and $\kappa^0(\eta)$ equal to the number of loops - i.e. components which are not isolated vertices. The loop O($n$) model was introduced as a continuous interpolation of first-order approximations to the spin O($n$) models. In the 80s, Nienhuis provided renormalization group arguments that gave a conjectural phase diagram of the model \cite{NieCrit}. As such, the loop O($n$) model is a unified setting for studying a bevy of planar statistical mechanics models, and it has become the subject of much study in the last decade.

In \cite{Glazman2021Log}, a coupling was introduced\footnote{Originally for $n=2=1+(2-1),$ but it straightforwardly generalizes to all  ways of splitting up $n=m+(n-m)$ with $m\leq n$.} where, by fixing $m\leq n$ and, conditionally on $\eta\sim \ell_{G,x,n}$, independently coloring loops of $\eta$ red with probability $\frac{m}{n}$ and blue otherwise, one gets a representation of $\ell_{G,x,n}$ as the union of independent $\eta^{\mathtt{red}}\sim\ell_{G,x,m}$ and $\eta^{\mathtt{blue}}\sim \ell_{G,x,n-m}$ configurations conditioned not to intersect. Just like for the classical Edwards-Sokal coupling, one can see this immediately by writing $n^{\kappa^0(\eta)}=(m+(n-m))^{\kappa^0(\eta)}$ and applying the binomial theorem. From this, it follows that the conditional distribution of $\eta$ given $\eta^{\mathtt{red}}$ is $\ell_{G\setminus \eta^{\mathtt{red}},x,n-m}$, a fact which was used in \cite{crawford2020macroscopic} to get XOR-invariance of the model away from $n=1$. 

It is noticeable that this, too, is a special case of \Cref{thm:generalcoupling} with 
$$
\Omega=\Sigma=\mathcal{E}_{\emptyset}(G),\quad \rho[\omega]\propto (n-m)^{\kappa^0(\omega)}x^{|\omega|},\quad \gamma[\eta]\propto \left(\frac{m}{n-m}\right)^{\kappa^0(\eta)},\quad  f(\omega)=\{\eta \in \Sigma \mid \eta\subset \omega \}.
$$

Tracing the marginals, we see that $\mathscr{P}_{\Omega}=\ell_{n}$, while
$
\frac{\textrm{d}\mathscr{P}_{\Sigma}}{\textrm{d} \ell_{m}}[\eta]\propto Z_{E\setminus \eta,n-m}
$
where $Z_{E\setminus \eta,n-m}$ denotes the partition function of the loop O($n-m$) model on the complement of $\eta$. For the conditional measures, one gets, by the previous discussion, that $\mathscr{P}[\;\cdot \mid \eta]=\ell_{G\setminus \eta,n-m}\cup\delta_{\eta}$ and that $\mathscr{P}[\cdot\mid \omega]=\mathbb{P}_{\mathcal{C}(\omega),\frac{m}{n}},$ by which we mean the measure obtained by retaining each cluster of $\omega$ independently with probability $\frac{m}{n}$.
\\
\\
Additionally, one may be able to view many other tricks in statistical mechanics through the lens of \Cref{thm:generalcoupling}, for example Wilson’s Algorithm for sampling the uniform spanning tree \cite{wilson1996generating}, the Temperley-bijection between spanning trees and perfect matchings \cite{kenyon1999trees}, the BKW-coupling relating the six-vertex and random-cluster models \cite{baxter1976equivalence}, the Kac-Ward formula \cite{cimasoni2010generalized,lis2016short},  Le-Jan's isomorphism \cite{jan2008markov}, Poisson thinning, Prüfer codes and super-symmetric representations \cite[Figure 3]{poudevigne2024h}. 

\end{appendix}

\section*{Acknowledgments}
We would like to thank Paul Duncan and Lorca Heeney for useful comments.

UTH's research was funded by the Austrian Science Fund (FWF) 10.55776/P34713.  JJ was supported by National Natural Science Foundation of China (No. 12226001 and No. 12271284). FRK was supported by the Carlsberg Foundation, grant CF24-0466.

For open access purposes, the author has applied a CC BY public copyright license to any author accepted manuscript version arising from this submission.

No data was used for this study and the authors have no relevant conflicts of interest.

\bibliographystyle{abbrv}
\bibliography{bibliography}

\end{document}